\documentclass[notitlepage,11pt,punct,oneside]{amsart}
\usepackage[a4paper,hmargin={2.54cm,2.54cm},vmargin={3.17cm,3.17cm}]{geometry}
\usepackage{amsmath,amssymb,amsthm}
\numberwithin{equation}{section}
\usepackage[runin]{abstract}
\usepackage{graphicx}
\usepackage{subfigure}
\usepackage{tikz}

\usepackage{float}

\usepackage[pdfborder={0 0 0},colorlinks=true,linkcolor=blue,CJKbookmarks=true]{hyperref}
\usepackage{mathrsfs}

\setlength{\absleftindent}{1.5cm} \setlength{\absrightindent}{1.5cm}
\setlength{\abstitleskip}{-\parindent}
\setlength{\absparindent}{0cm}

\def\XXint#1#2#3{{\setbox0=\hbox{$#1{#2#3}{\int}$}
  \vcenter{\hbox{$#2#3$}}\kern-.5\wd0}}

\newtheorem{definition}{\hspace{2em}Definition}[section]
\newtheorem{theorem}[definition]{\hspace{2em}Theorem}

\newtheorem{lemma}[definition]{\hspace{2em}Lemma}
\newtheorem{proposition}[definition]{\hspace{2em}Proposition}

\newtheorem{remark}{\hspace{2em}Remark}
\newtheorem{example}[definition]{\hspace{2em}Example}

\begin{document}
\title{Asymptotic behavior of $\beta$-polygon flows}

\author{David Glickenstein}\thanks{DG was partly supported by NSF grant DMS 0748283.}
\address{Department of Mathematics\\
University of Arizona\\
Tucson, AZ 85721}
\email{glickenstein@math.arizona.edu}

\author{Jinjin Liang}
\address{Department of Mathematics\\
University of Arizona\\
Tucson, AZ 85721}
\email{jliang@math.arizona.edu}

\keywords{curve shortening flow, polygon, polygon flow}
\subjclass{Primary 53C44; Secondary 58E50, 51E12}

\newdimen\R
\R=0.8cm
\newcommand{\real}{\mathbb{R}}

\maketitle
\begin{abstract}
   In this article we investigate a family of nonlinear evolutions of polygons in the plane called the
   $\beta$-polygon flow and obtain some results analogous to results for the smooth curve shortening flow:
   (1) any planar polygon shrinks to a point and (2) a regular polygon with five or more vertices is
   asymptotically stable in the sense that nearby polygons shrink to points that rescale to a regular polygon.
   In dimension four we show that the shape of a square is locally stable under perturbations along a hypersurface
   of all possible perturbations.
   Furthermore, we are able to show that under a lower bound on angles there exists a rescaled
   sequence extracted from the evolution that converges to a limiting polygon that is a self-similar
   solution of the flow. The last result uses a monotonicity formula analogous to Huisken's for the curve shortening flow.
\end{abstract}

\section{Introduction}

The Gage-Grayson-Hamilton Theorem (\cite{gage,gray}) states that any embedded plane curve converges
to a round point in an asymptotically self-similar manner under the motion by its curvature.
One interesting and still open question is whether one can find a discrete version of the
curve shortening flow such that any embedded polygon contracts to a regular point in an asymptotically
self-similar manner. Several approaches to this have been suggested, such as flow by the generalized gradient
flow of the length functional \cite{Naka,dziuk} and flow by the Menger curvature \cite{jeck}.
 However, even locally, none of these flows gives an affirmative answer to the above question.
 It seems that these flows (\cite{Naka, dziuk, jeck}) may cease to be defined
when one of the edge lengths becomes zero, which can happen, for instance for a long, skinny rectangle.
In this paper, we consider a slightly different flow that does not have a problem when an edge length becomes zero.

We consider a family of nonlinear evolutions of polygons. 
\begin{definition}
A family of polygons $X(t)=\left(X_0,\ldots,X_{N-1}\right)$ (see Definition \ref{defpolygon}) evolves by the \emph{$\beta$-polgyon flow} if it satisfies
\begin{equation}\label{flowmatrix}
\frac{dX_j}{dt}=l_j^\beta (X_{j+1}-X_j)+l_{j-1}^\beta (X_{j-1}-X_j), 
\end{equation}
where $\beta \geq 0$ and $l_j = \left| X_{j+1}-X_j \right|$ for the parameters $j=0,\dots,N-1$ 
considered module $N$.
\end{definition}

In
\cite{dav}, Chow and Glickenstein consider the system (\ref{flowmatrix}) when $\beta=0$. In this case,
(\ref{flowmatrix}) turns out to be a linear system. The main results obtained in \cite{dav} are that
 the flow shrinks any polygon to a point and the asymptotic shape is affinely-regular if the initial polygon
  is not orthogonal to the regular polygon. The linear flow has the advantage that there is no singularity
  before the polygon extinguishes. A disadvantage is that the space of affinely-regular polygons
   is a big space;
  for example, all triangles and parallelograms are affinely-regular. Therefore, in the end of \cite{dav}, the
  authors ask whether the nonlinear system (\ref{flowmatrix}) flows a polygon asymptotically to
  a regular polygon. We are able to give a partial answer to this question. We prove that the $\beta$-polygon
  flow converges to a self-similar solution.


\begin{theorem}\label{contractsimilar}
  Let $X(t)$ be the solution of the $\beta$-polygon flow for $t\in[0,\infty)$. Assume the angle bound (\ref{anglelowerbd}) is
  satisfied and suppose $X(t)\rightarrow x_0$ as $t\rightarrow\infty$. Then for any sequence $c_k\nearrow\infty$
  there exists a subsequence still denoted by $c_k$  such that the following rescaled polygons converge to a polygon
  that contracts self-similarly:
  \begin{displaymath}
    c_k\left[X(c_k^\beta\tau)-x_0\right]\rightarrow Y(\tau),
  \end{displaymath}
  where $Y(\tau)$ is a self-similar solution for $\tau>0$.
\end{theorem}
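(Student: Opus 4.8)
The plan is to run the standard blow-up argument: use the scaling symmetry of \eqref{flowmatrix} to turn the hypothesis $X(t)\to x_0$ into a sequence of solutions living at a fixed scale, extract a limit by compactness, and use the monotonicity formula to identify the limit as self-similar.

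\textbf{Step 1 (Rescaling).} Put $X^{(k)}(\tau):=c_k\bigl[X(c_k^\beta\tau)-x_0\bigr]$. A direct computation shows that the $\beta$-polygon flow is invariant under the parabolic rescaling $X(t)\mapsto\lambda\bigl[X(\lambda^\beta t)-x_0\bigr]$: edge lengths scale by $\lambda$, the right-hand side of \eqref{flowmatrix} scales by $\lambda^{\beta+1}$, and the time change $t\mapsto\lambda^{\beta}t$ produces the matching factor on the left. Taking $\lambda=c_k$ shows that each $X^{(k)}$ is again a solution of the $\beta$-polygon flow on $\tau\in(0,\infty)$, with the same number of vertices, and obeying the same angle lower bound \eqref{anglelowerbd} since interior angles are scale invariant. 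The goal is now to pass to a subsequential limit $X^{(k)}\to Y$ and show that $Y$ contracts self-similarly.

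\textbf{Step 2 (Compactness).} To apply Arzel\`a--Ascoli in $\mathbb{R}^{2N}$ one needs, for each fixed $\tau>0$, uniform two-sided control of $X^{(k)}(\tau)$: an upper bound on $\operatorname{diam}X^{(k)}(\tau)$ and a lower bound on its edge lengths (equivalently, $|X(t)-x_0|\asymp t^{-1/\beta}$). The lower bound is where \eqref{anglelowerbd} enters decisively — it is the discrete analogue of a no-collapsing estimate — and the upper bound follows from maximum-principle-type control of the diameter together with the (scale-invariant) monotone quantity below. Granting these bounds, and noting that \eqref{flowmatrix} is an ODE with right-hand side smooth away from $\{l_j=0\}$, one extracts a subsequence, still denoted $c_k$, with $X^{(k)}\to Y$ in $C^1_{loc}\bigl((0,\infty);\mathbb{R}^{2N}\bigr)$; the limit $Y$ is a solution of the $\beta$-polygon flow on $(0,\infty)$ that still satisfies \eqref{anglelowerbd} and has positive diameter for every $\tau>0$.

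\textbf{Step 3 (Self-similarity via monotonicity).} Let $\Theta(X,t)$ denote the monotone quantity furnished by our monotonicity formula, so that along any solution $\frac{d}{dt}\Theta(X(t),t)=-R(t)\le 0$ with $R\ge 0$, and with equality (i.e.\ $R\equiv 0$) exactly when the configuration is a self-similarly contracting solution. Since $\Theta$ is bounded, $\Theta(X(t),t)\searrow\Theta_\infty$ as $t\to\infty$. Scale invariance of $\Theta$ gives $\Theta\bigl(X^{(k)}(\tau),\tau\bigr)=\Theta\bigl(X(c_k^\beta\tau),c_k^\beta\tau\bigr)\to\Theta_\infty$ for every $\tau>0$. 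Applying the formula to the solution $X^{(k)}$ and integrating over $[\tau_1,\tau_2]\subset(0,\infty)$,
\[
\int_{\tau_1}^{\tau_2}R^{(k)}(\tau)\,d\tau=\Theta\bigl(X^{(k)}(\tau_1),\tau_1\bigr)-\Theta\bigl(X^{(k)}(\tau_2),\tau_2\bigr)\longrightarrow 0.
\]
By the $C^1_{loc}$ convergence $X^{(k)}\to Y$ and continuity of the integrand away from the degenerate set, the left side converges to $\int_{\tau_1}^{\tau_2}R_Y(\tau)\,d\tau$, so $R_Y\equiv 0$ on $(0,\infty)$; the equality case of the monotonicity formula then forces $Y(\tau)$ to be a self-similar solution, proving the theorem.

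\textbf{Main obstacle.} The heart of the argument is Step 2 — establishing the uniform, scale-invariant two-sided bounds on the rescaled polygons, and in particular the lower bound that prevents the blow-up limit from collapsing back to the point $x_0$ (which would make ``self-similar solution'' vacuous). This is exactly where the angle hypothesis \eqref{anglelowerbd} is indispensable, and combining it with the monotone quantity to pin down the decay rate $|X(t)-x_0|\asymp t^{-1/\beta}$ is the delicate point. A minor secondary issue is the behavior of the monotone quantity as $\tau\to 0$, which we sidestep by only ever integrating the monotonicity identity over compact subintervals of $(0,\infty)$.
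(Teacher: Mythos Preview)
Your outline matches the paper's strategy exactly: rescale, extract a subsequential limit by compactness, and use the monotonicity formula (Theorem~\ref{monoformula}) to force the limit to satisfy $Y+\tfrac{\beta}{2}\tau M_Y Y=0$. Step~3 is essentially the paper's argument verbatim.

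The gap you flag in Step~2 is real, and your suggestion for closing it---``combining [the angle bound] with the monotone quantity to pin down the decay rate''---is not how the paper does it. The paper bypasses $\rho$ entirely for the compactness step and works with the gradient-flow energy $F_\alpha$ directly (Lemmas~\ref{enerybd}--\ref{lem:compactfamily}). The identity
\[
\frac{d}{dt}F_\alpha(X)=-\sum_j\Bigl(l_j^{2\beta+2}\sin^2\theta_j+\bigl\lvert l_j^{\beta+1}\cos\theta_j+l_{j-1}^{\beta+1}\bigr\rvert^2\Bigr),
\]
combined with the angle bound~\eqref{anglelowerbd} and equivalence of norms on $\mathbb{R}^N$, gives a two-sided differential inequality $-\eta_1 F_\alpha^{(2\beta+2)/(\beta+2)}\le \dot F_\alpha\le -\eta_2 F_\alpha^{(2\beta+2)/(\beta+2)}$, which integrates to $F_\alpha(X(t))\asymp t^{-(\beta+2)/\beta}$ and hence $F_\alpha(Y^k(\tau))\asymp 1$ uniformly on $[\epsilon,1/\epsilon]$. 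This bounds the \emph{diameter} of each $Y^k(\tau)$; to pin down the \emph{position} (you need the polygons themselves, not just their shapes, to stay bounded), the paper invokes the a~priori estimate Lemma~\ref{bdlem}: if the containing balls $B(q_k,R)$ drifted off to infinity, $Y^k$ would be trapped away from the origin for all later times, contradicting $Y^k(\tau)\to 0$ as $\tau\to\infty$. Arzel\`a--Ascoli then gives compactness in $C^0([\epsilon,1/\epsilon],\mathbb{C}^N)$. So the two-sided decay estimate comes from a clean ODE argument for $F_\alpha$, not from the entropy $\rho$; the latter is used only in Step~3.
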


The convex regular polygons are self-similar solutions. We are furthermore able to prove these are stable in the
following theorems.

\begin{theorem}\label{stableshapethm5}
  Assume $N\geq5$. Under the $\beta$-polygon flow, any regular $N$-gon shrinks to a point
  and is asymptotically stable in the sense that there is a neighborhood such that polygons in that
  neighborhood will converge to a regular polygon under the $\beta$-polygon flow if appropriately rescaled.
\end{theorem}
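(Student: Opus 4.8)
The plan is to exploit three structural features of $(\ref{flowmatrix})$: it is translation invariant, it is covariant under $X\mapsto\lambda X$ together with a rescaling of time, and it is a gradient flow; one then reduces the shape question to the local dynamics of a normalized flow near the orbit of the regular polygon, which is analyzed by linearization using the cyclic symmetry.

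First I would record the structure. Summing $(\ref{flowmatrix})$ over $j$ gives $\frac{d}{dt}\sum_jX_j=0$, so the center of mass is fixed; assume it is at the origin and work in $V=\{X:\sum_jX_j=0\}$. One checks that the right-hand side $F$ satisfies $F(\lambda X)=\lambda^{1+\beta}F(X)$ and $F(RX)=RF(X)$ for rotations $R$ and for the cyclic shift, and that $\frac{dX_j}{dt}=-\frac{1}{2+\beta}\,\partial_{X_j}\mathcal E$ with $\mathcal E(X)=\sum_jl_j^{2+\beta}$, so $(\ref{flowmatrix})$ is a gradient flow. Writing $X=\rho u$ with $\rho=\|X\|$ and $u$ on the unit sphere $S\subset V$, and changing time by $d\tau=\rho^\beta\,dt$, one obtains the autonomous \emph{normalized flow} $\frac{du}{d\tau}=G(u):=F(u)-\langle F(u),u\rangle u$ on $S$, which (since $F=-\tfrac1{2+\beta}\nabla\mathcal E$) is, up to a positive constant, the gradient flow of $\mathcal E|_S$. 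Substituting a regular $N$-gon $P=(1,\omega,\dots,\omega^{N-1})$, $\omega=e^{2\pi i/N}$, into $(\ref{flowmatrix})$ shows it stays a regular $N$-gon with $\dot\rho=-c_N\rho^{1+\beta}$, $c_N>0$, which proves that every regular $N$-gon shrinks to its center of mass (at $t=\infty$); it also shows that $P/\|P\|$ together with its rotations forms a circle $\Gamma\subset S$ of equilibria of $G$ (the self-similar solutions coming from regular polygons). It then suffices to prove that $\Gamma$ is asymptotically stable for $G$, since convergence of the normalized flow to $\Gamma$ is exactly convergence of the rescaled polygon to a rotated regular $N$-gon, provided also $\tau(t)=\int_0^t\rho(s)^\beta\,ds\to\infty$; I would get the latter from $\frac{d}{dt}\rho^2=-2\mathcal E$ and $\mathcal E\asymp\rho^{2+\beta}$ near $\Gamma$, which force $\rho\asymp t^{-1/\beta}$ for $\beta>0$ (and $\tau=t$ when $\beta=0$).

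The heart of the matter is the linearization of $G$ along $\Gamma$. Linearizing $F$ at $P$, where all $l_j$ equal $l=|\omega-1|$, and tracking the first variation of the edge lengths, one finds $DF_P(Y)=l^\beta\big[(1+\tfrac\beta2)\Delta Y+\tfrac\beta2\,\widehat B(Y)\big]$ with $\Delta$ the discrete Laplacian and $\widehat B$ an $\mathbb R$-linear, cyclic-equivariant operator assembled from the edge directions. Decomposing $V$ into the Fourier modes $e_k=(1,\omega^k,\dots,\omega^{(N-1)k})$, one checks that $\widehat B$ maps mode $k$ into modes $k$ and $2-k$, so $DF_P$ preserves each space $\mathrm{span}_{\mathbb R}\{e_k,ie_k,e_{2-k},ie_{2-k}\}$ and there splits into two symmetric $2\times2$ blocks $l^\beta\!\begin{pmatrix}(1+\tfrac\beta2)\lambda_k&\pm\beta\gamma_k\\ \pm\beta\gamma_k&(1+\tfrac\beta2)\lambda_{2-k}\end{pmatrix}$, where $\lambda_k=2\cos\tfrac{2\pi k}{N}-2$ and $\gamma_k=\cos\tfrac{2\pi}{N}-\cos\tfrac{2\pi(k-1)}{N}$. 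On mode $k=1$ the two eigenvalues are $l^\beta(1+\beta)\lambda_1$ (the scaling direction, quotiented away) and $l^\beta\lambda_1$ (the direction tangent to $\Gamma$, which in $DG$ becomes the simple zero eigenvalue). On each mode $k\not\equiv1\pmod N$, the eigenvalue of $DG$ at $P/\|P\|$ equals $\|P\|^{-\beta}$ times (the eigenvalue of $DF_P$ minus $l^\beta\lambda_1$), so $\Gamma$ is linearly asymptotically stable exactly when the top eigenvalue of each of the above blocks is $<\lambda_1$ for all $k\not\equiv1$.

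Verifying this last inequality is the main obstacle, and it is where $N\ge5$ enters. The useful identity is $\gamma_k^2=\tfrac14\lambda_k\lambda_{2-k}$, which follows from $\lambda_m=-4\sin^2\tfrac{m\pi}{N}$ and a product-to-sum formula; with it the top-eigenvalue condition collapses, after clearing the radical, to $(1+\beta)\lambda_k\lambda_{2-k}>(1+\tfrac\beta2)\lambda_1(\lambda_k+\lambda_{2-k})-\lambda_1^2$, which at $\beta=0$ is just $(\lambda_k-\lambda_1)(\lambda_{2-k}-\lambda_1)>0$. Since $\lambda_m\le\lambda_2<\lambda_1$ for $2\le m\le N-2$, every pair $\{k,2-k\}$ with both indices in that range — and, for even $N$, the self-paired mode $1+N/2$ — is strictly stable already at $\beta=0$ and stays so for $\beta>0$; the lone borderline pair is $\{3,N-1\}$, where $\lambda_{N-1}=\lambda_1$ and, using the identity again, the $\beta$-dependence cancels and the condition reduces to $\lambda_3<\lambda_1$, true precisely for $N\ge5$. (For $N=3$ this pair is absent, and for $N=4$ it becomes the self-paired mode $1+N/2$, producing the neutral one-parameter family of rhombi through the square and hence genuine non-stability; $\beta>0$ is also needed, the case $\beta=0$ giving only the affinely-regular limits of \cite{dav}.) Granted the inequality, $\Gamma$ is a normally hyperbolic circle of equilibria of the gradient flow $G$, so by the reduction/center-manifold theorem (or a Morse--Bott, or \L ojasiewicz--Simon, argument) there is a neighborhood of $\Gamma$ in $S$ from which every trajectory converges, as $\tau\to\infty$, to a point of $\Gamma$, i.e.\ to a rotation of $P/\|P\|$. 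Undoing the normalization gives: for initial data near a regular $N$-gon the solution shrinks to its center of mass $x_0$ and $\big(X(t)-x_0\big)/\|X(t)-x_0\|$ converges to a regular $N$-gon, as claimed. The genuinely delicate steps are the block reduction of $DF_P$ and the cosine inequality for the borderline pair; the rest is standard ODE theory and bookkeeping.
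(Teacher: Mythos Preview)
Your argument is correct and follows the same high–level strategy as the paper: pass to a normalized (autonomous) flow for which the regular polygon orbit is an equilibrium set, linearize there, show the spectrum is negative off the structural null directions, and finish with a center–manifold/normal–hyperbolicity argument together with translation invariance. The implementations, however, are genuinely different. The paper rescales by the self–similar factor $a(t)$ to obtain the $\lambda_1$–rescaled flow (\ref{turnselfauto}), writes the linearization in real coordinates as $D+\beta E$ with $D=-\lambda_1 I+\mathrm{diag}(M,M)$, and proves negativity by showing that $E$ is negative semidefinite via a completion of squares (Lemma~\ref{eigofE}) and then computing the intersection of kernels $D_0\cap E_0$ (Lemma~\ref{eigspaceofd+eta}); the role of $N\ge5$ appears as the nondegeneracy of a $2\times2$ determinant in that kernel computation. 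You instead normalize to the unit sphere in $V$, linearize $F$ directly in complex Fourier modes, observe that the conjugate–linear piece $\widehat B$ couples mode $k$ with mode $2-k$, and reduce to explicit $2\times2$ blocks; the identity $\gamma_k^2=\tfrac14\lambda_k\lambda_{2-k}$ then collapses the top–eigenvalue test to a clean inequality, with the only nontrivial pair being $\{3,N-1\}$, where the condition becomes $\lambda_3<\lambda_1$ and hence exactly $N\ge5$. Your route gives sharper spectral information (the actual block eigenvalues, hence in principle convergence rates) and makes transparent \emph{which} mode is responsible for the $N=4$ degeneracy; the paper's route avoids the pair–by–pair check by a single semidefiniteness statement and is somewhat more robust if one does not want to track the trigonometric identities. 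One small point worth making explicit in your write–up: the assertion that the non–borderline pairs ``stay stable for $\beta>0$'' follows from $\tfrac{d}{d\beta}\big[(1+\beta)\lambda_k\lambda_{2-k}-(1+\tfrac\beta2)\lambda_1(\lambda_k+\lambda_{2-k})\big]=\lambda_k\lambda_{2-k}-\tfrac12\lambda_1(\lambda_k+\lambda_{2-k})>0$ whenever $\lambda_k,\lambda_{2-k}\le\lambda_2<\lambda_1$, which is the estimate implicitly used.
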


\begin{theorem}\label{stableshapethm4}
  When $N=4,$ the shape of square is locally stable on a 7-dimensional hypersurface $\mathcal{W}'$ under the
  $\beta$-polygon flow.
\end{theorem}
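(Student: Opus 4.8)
The plan is to run the same analysis as in the proof of Theorem~\ref{stableshapethm5}, the only new phenomenon being that for $N=4$ the linearization at the square has, beyond the zero modes forced by the invariances of the flow, exactly one further neutral direction, so that stability can be asserted only after deleting that direction.

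First I would pass to the rescaled flow obtained by recentering at the centroid, dividing by a size functional such as $\bigl(\sum_j l_j^2\bigr)^{1/2}$, and reparametrizing time, so that the square becomes a stationary solution (near the square the angle bound (\ref{anglelowerbd}) holds automatically). I would then linearize this rescaled flow at the square to get an operator $\mathcal{L}$ on the $8$-dimensional space of infinitesimal $4$-gons. Writing the vertices as complex numbers with the square at $R_j=i^{\,j}$, the operator $\mathcal{L}$ commutes with the dihedral symmetry group $D_4$ of the square, so the discrete Fourier transform splits the space into the modes $k=0,1,2,3$; because the first variation of $l_j$ involves a real part, $\mathcal{L}$ couples a $\mathbb{C}$-linear with a $\mathbb{C}$-antilinear piece on each mode, and the modes that are reducible as $D_4$-representations break further into real eigenlines that must be diagonalized directly. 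The expected outcome is: the $k=0$ mode (translations) and the $k=1$ mode (infinitesimal scalings and rotations) supply the four zero eigenvalues that are removed by recentering and rescaling; the $k=2$ mode of parallelogram-breaking deformations has strictly negative eigenvalues; and the $k=3$ mode of affine deformations within parallelograms splits into a ``rectangle'' eigenline, strictly stable for $\beta>0$ because the term $a^{\beta}-b^{\beta}$ in the evolution of $\tfrac12\log(b/a)$ forces the aspect ratio to $1$, and a ``rhombus'' eigenline with eigenvalue exactly $0$, because a rhombus has all four edges equal, the flow preserves the ratio of its diagonals identically, and every rhombus near the square is itself a self-similar solution (when $\beta=0$ the rectangle eigenvalue also vanishes and one recovers the picture of \cite{dav}). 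This neutral rhombus line is the obstruction, and it is specific to $N=4$: for $N\ge5$ one checks instead that affinely-regular together with equilateral forces regular, so every shape eigenvalue is negative and Theorem~\ref{stableshapethm5} yields honest asymptotic stability.

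Finally I would take $\mathcal{W}'$ to be the preimage, under the map sending a $4$-gon to its shape, of the three-dimensional strong stable manifold of the square for the rescaled flow --- equivalently, the manifold tangent at the square to the four invariance directions together with the three strictly stable directions (the rectangle line and the two parallelogram-breaking directions). Since there is a spectral gap between $0$ and the strictly negative eigenvalues, the invariant manifold theorem in its strong-stable (pseudo-hyperbolic) form produces $\mathcal{W}'$ as a smooth $7$-dimensional flow-invariant hypersurface transverse to the rhombus direction, and a rescaled solution starting on $\mathcal{W}'$ sufficiently near the square decays exponentially to the square; unwinding the rescaling, a $4$-gon lying on $\mathcal{W}'$ near the square shrinks to a point whose rescalings converge to a square, which is exactly the asserted local stability. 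The main obstacle will be the spectral step --- bookkeeping the antilinear contribution of the variation of $l_j$ so as to compute $\mathcal{L}$ correctly, checking that the two $k=2$ eigenvalues stay strictly negative for every $\beta\ge0$ and not merely for small $\beta$, and, since the rhombus direction is an exact line of fixed points rather than an unstable direction, confirming that the invariant manifold may be taken to be the strong stable manifold of the square itself rather than merely a center-stable manifold through it.
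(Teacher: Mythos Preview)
Your proposal is correct and reaches the same conclusion as the paper, but the spectral step is organized differently. You diagonalize the linearization via the $D_4$ (Fourier) decomposition and then argue mode by mode that $k=2$ is strictly stable, $k=3$ splits into a stable rectangle line and a neutral rhombus line, and the rhombus direction is an exact line of self-similar solutions. The paper instead writes the linearization at $P_1$ as $D+\beta E$ with $D=-\lambda_1 I+\operatorname{diag}(M,M)$ and shows, via a weighted graph-Laplacian identity (Lemma~\ref{lapcycle}), that $E$ is globally negative semidefinite with an explicit null space; since $D$ is already diagonalized by the $P_k$, the center space is simply $D_0\cap E_0$, which for $N=4$ is $\operatorname{span}\{iP_1,\overline{P_1}\}$. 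This buys the paper uniformity in $\beta$ for free: the strict negativity of the $k=2$ and rectangle eigenvalues follows immediately from $D+\beta E$ being negative definite off $E^u\oplus(D_0\cap E_0)$, with no case-by-case sign check---precisely the ``main obstacle'' you flag. Your rescaling by $(\sum l_j^2)^{1/2}$ and passage to shape space is equivalent to the paper's explicit $\lambda_1$-rescaled flow (\ref{turnselfauto}); the only cosmetic difference is that in the paper's coordinates translations are \emph{unstable} (eigenvalue $-\lambda_1>0$) rather than quotiented out, and the scaling direction $P_1$ sits in $E^s$ rather than being removed. Finally, your construction of $\mathcal{W}'$ as the preimage of a $3$-dimensional strong-stable manifold in shape space matches the paper's $\mathcal{W}'=\coprod_{x\in E^u}(x+\mathcal{W})$ with $\mathcal{W}=\coprod_\eta e^{i\eta}\mathcal{W}^s$: both are tangent at $P_1$ to everything except the rhombus line $\mathbb{R}\,\overline{P_1}$.
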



The outline of this article is as follows. In Section \ref{secexistunique}, we establish the long time existence and
uniqueness of the initial value problem for the $\beta$-polygon flow. In Section \ref{sectria}, we construct a Lyapunov
function to show that any triangle would converge to a regular triangle. In Section \ref{secselfsim}, inspired by
Huisken's monotonicity formula \cite{hui}, we have the global stability result, Theorem \ref{contractsimilar}. In
Section \ref{seclocal}, we obtain the local stability of the $\beta$-polygon flow (\ref{flowmatrix}) in Theorems
\ref{stableshapethm5} and \ref{stableshapethm4}.

\section{Existence and basic properties}\label{secexistunique}

In this section we will describe the $\beta$-polygon flow and give basic
properties of it. First we give a definition of a polygon.

\begin{definition}\label{defpolygon}
  An $N$-gon, or polygon, $X$ in the Euclidean plane is an ordered $N$-tuple of points in the plane, $X=(X_0,\cdots,X_{N-1})$. Note that the index of the points will always be considered modulo $N$.
\end{definition}
  The points of the polygon are called \emph{vertices} and the line segments joining consecutive vertices are called \emph{edges}. The geometry of the polygon is determined by the following quantities.

\begin{definition}  \label{defangle}
  The \emph{length} of an edge, denoted $l_j$ is defined to be the distance between the adjacent vertices $X_j$ and $X_{j+1}$. The angle $\theta_j$ at vertex $X_j$ is defined to be the angle such that rotating the unit vector $\overrightarrow{X_jX_{j+1}}/|\overrightarrow{X_jX_{j+1}}|$ an angle of $\theta_j$ in the counterclockwise direction gives the vector $\overrightarrow{X_jX_{j-1}}/|\overrightarrow{X_jX_{j-1}}|$.
\end{definition}
Note that we are using $\theta_j$ to denote the interior angle of a polygon. In
some related work, the angle is defined to be the exterior angle, and would
have the value of $\pi - \theta_j$. We are also assuming that consecutive
vertices are not equal, in which case we would not be able to define angle.

We have a natural identification between an $N$-gon in $\mathbb{R}^2$ and an $N$-vector in $\mathbb{C}^n.$ In particular, we can write the vertex $X_j=(x_j,y_j)$ as $X_j=x_j+iy_j$ where $i=\sqrt{-1}).$
Sometimes it is convenient to write the polygon as a $N \times 2$ matrix:
\[
X=\left(
	\begin{array} [c]{cc}
	x_{0} & y_{0}\\
	x_{1} & y_{1}\\
	\vdots & \vdots\\
	x_{N-1} & y_{N-1}
\end{array}
\right).
\]
In this case, any two-by-two matrix $M$ can act on $X$ on the right by matrix multiplication, $XM$.
We will consider actions by a Euclidean isometry $E$ from the right as well; the
rotational part acts by matrix multiplication on the right and the translational part acts by adding a matrix with all rows equal.

Let $X=(X_0,\cdots,X_{N-1})$ be a planar $N$-gon. Consider the following energy functional on $X$:
\begin{equation}\label{energey}
F_\alpha(X)=\frac{1}{\alpha}\sum_{j=0}^{N-1}|X_{j+1}-X_j|^\alpha ,
\end{equation} where the indices, as usual, are taken modulo $N$. We can then compute the variation of this functional.
If $dX_j/dt=Y_j,$ then
\begin{equation}
\frac{d}{dt}F_\alpha(X)=-\sum_{j=0}^{n-1}\left(\frac{X_{j+1}-X_j}{|X_{j+1}-X_j|^{2-\alpha}}+\frac{X_{j-1}-X_j}{|X_{j-1}-X_j|^{2-\alpha}}\right)\cdot Y_j.
\end{equation}
The negative gradient flow of $F_\alpha$ is therefore
\begin{equation}\label{flow}
\frac{dX_j}{dt}=\frac{X_{j+1}-X_j}{|X_{j+1}-X_j|^{2-\alpha}}+\frac{X_{j-1}-X_j}{|X_{j-1}-X_j|^{2-\alpha}}.
\end{equation}

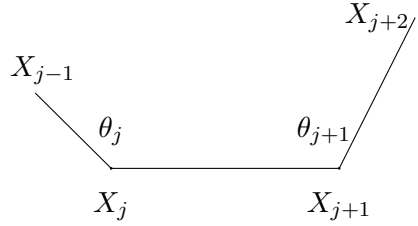
\begin{figure}[htbp]
\centering
\begin{tikzpicture}
  \draw  (-1,1)--(0,0);\draw (0,0)--(3,0);\draw (3,0)-- (4,2);
  \fill (3,0) circle[radius=0.5pt];
\fill (0,0) circle[radius=0.5pt];

\node (xj) at (0,-0.5){$X_j$};
\node (xj1) at (3,-0.5){$X_{j+1}$};
\node (xj-1) at (-0.9,1.3){$X_{j-1}$};
\node (xj+2) at (3.5,2){$X_{j+2}$};

\node (thej) at (0,0.5){$\theta_j$};
\node (thej1) at (2.8,0.5){$\theta_{j+1}$};

\end{tikzpicture}
\caption{Part of a polygon}\label{lenang}
\end{figure}

%
%

In this paper, we consider the evolution of the planar polygons under the flow (\ref{flow}) in the case $\alpha=\beta+2$ where $\beta>0.$ Recalling that $l_j$ denotes the edge length between $X_j$ and $X_{j+1}$, we can rewrite (\ref{flow}) as (\ref{flowmatrix}). The matrix form of (\ref{flowmatrix}) is
\begin{equation}\label{flowmatrix2}
\frac{d}{dt}X=M_{X}X,
\end{equation} in which $$M_{X}=\left(
                       \begin{array}{cccccc}
                         -(l_0^\beta+l_{n-1}^\beta) & l_0^\beta & 0 & \cdots & 0 & l_{N-1}^\beta \\
                         l_0^\beta & -(l_0^\beta+l_1^\beta) & l_1^\beta & 0 & \ddots & 0 \\
                         0 & l_1^\beta & -(l_1^\beta+l_2^\beta) & l_2^\beta & 0 & \vdots \\
                         \vdots & 0 & \ddots & \ddots & \ddots & 0 \\
                         0 & \ddots & 0 & l_{N-3}^\beta & -(l_{N-3}^\beta+l_{N-2}^\beta) & l_{N-2}^\beta \\
                         l_{N-1}^\beta & 0 & \cdots & 0 & l_{N-2}^\beta & -(l_{N-2}^\beta+l_{N-1}^\beta) \\
                       \end{array}
                     \right).$$
We may refer to either of the equivalent systems (\ref{flowmatrix}) or (\ref{flowmatrix2}) as the $\beta$-polygon flow.
\begin{remark}
 The matrix $M_X$ has the form of a weighted graph Laplacian on the $N$-cycle $X$, where each edge $\overrightarrow{X_kX_{k+1}}$ has weight $l_k^\beta$. Therefore, (\ref{flowmatrix}) can be considered a type of heat equation.
\end{remark}

We describe some basic properties of $M_X$.
\begin{proposition}\label{prop:M properties}
Let $X$ be a polygon, $c >0$, and $E$ be a Euclidean isometry of the plane. We denote the action of $E$ on $X$ by $XE$ and use $\vec{1}$ to denote the vector of all ones. Then the following are true:
\begin{enumerate}
\item $M_{XE}=M_X$.
\item $M_{cX} = c^\beta M_X$.
\item $M_X \vec{1}=0$.
\end{enumerate}
\end{proposition}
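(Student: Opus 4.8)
The plan is to verify the three assertions directly from the definition of $M_X$ as the weighted graph Laplacian, checking how the edge lengths $l_j$ transform under each operation.

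For (1), I would recall that a Euclidean isometry $E$ acts on $X$ by $XE = XR + T$ where $R$ is an orthogonal $2\times 2$ matrix and $T$ has all rows equal to a fixed translation vector. Then the edge vectors satisfy $(XE)_{j+1} - (XE)_j = (X_{j+1}-X_j)R$, and since $R$ is orthogonal it preserves lengths, so $|(XE)_{j+1}-(XE)_j| = |X_{j+1}-X_j| = l_j$ for every $j$. Since $M_X$ depends on $X$ only through the numbers $l_0^\beta,\dots,l_{N-1}^\beta$, it is unchanged: $M_{XE} = M_X$. For (2), scaling by $c>0$ multiplies every edge vector by $c$, hence $|(cX)_{j+1}-(cX)_j| = c\,l_j$, so each weight becomes $(c l_j)^\beta = c^\beta l_j^\beta$. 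Since $M_X$ is built linearly from these weights, every entry picks up a factor $c^\beta$, giving $M_{cX} = c^\beta M_X$. For (3), I would simply note that each row of $M_X$ sums to zero: the diagonal entry in row $j$ is $-(l_{j-1}^\beta + l_j^\beta)$ and the two off-diagonal entries in that row are $l_{j-1}^\beta$ and $l_j^\beta$ (placed at the cyclically adjacent columns), which cancel. Hence $M_X \vec{1} = 0$. This is just the general fact that a graph Laplacian annihilates the constant vector.

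Alternatively, and perhaps more cleanly, (1) and (3) can be read off from the variational origin of the flow. Since $F_\alpha(XE) = F_\alpha(X)$ for any isometry $E$ (the functional depends only on edge lengths), the gradient transforms equivariantly, which forces $M_{XE} = M_X$; and since $F_\alpha$ is translation-invariant, the flow $\dot X = M_X X$ must fix the centroid, which is exactly the statement $M_X \vec 1 = 0$ applied on the left. Property (2) likewise follows from the scaling $F_\alpha(cX) = c^\alpha F_\alpha(X)$ together with $\alpha = \beta+2$.

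None of the steps presents a genuine obstacle; this proposition is a routine bookkeeping lemma recording the symmetries of the operator $M_X$ that will be used repeatedly later. The only point requiring a moment's care is making the action of a general Euclidean isometry (including its translational part) explicit, so that one sees the translation drops out of all edge differences; the rest is immediate from the formula for $M_X$ displayed above.
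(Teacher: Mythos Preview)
Your proof is correct and follows essentially the same approach as the paper, which simply notes that $M_X$ depends only on edge lengths (giving (1)), that the scaling is easily checked (giving (2)), and that the row-sum structure of $M_X$ gives (3). You have merely spelled out in more detail what the paper states in a sentence.
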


\begin{proof}
The first follows from the fact that $M_X$ uses only the edge lengths and not the points themselves,
so the matrix is unchanged by Euclidean transformations. The second is a scaling property that is
easily checked and the third comes from the form of the matrix $M_X$.
\end{proof}

This leads to the following important invariant property of
the $\beta$-polygon flow, which follows from the previous proposition.

\begin{lemma}\label{leminvariant}
  The $\beta$-polygon flow is invariant in the following way: if $c>0$, $E$ is a Euclidean
  transformation of the plane, and $\tau=\frac{1}{c^\beta}t$ then
  \[
  \frac{d}{d\tau}\left( cXE \right) = M_{cXE}(cXE)
  \]
\end{lemma}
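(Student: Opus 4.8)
The plan is to unwind the definitions and reduce everything to the three properties of $M_X$ already recorded in Proposition \ref{prop:M properties}. The statement asserts that if $X(t)$ is any solution of the $\beta$-polygon flow \eqref{flowmatrix2}, $c>0$, $E$ a Euclidean isometry, and we set $\tau = c^{-\beta} t$, then the reparametrized and transformed family $\widetilde X(\tau) := c\,X(t)E = c\,X(c^\beta \tau)E$ again solves the flow, i.e. $\frac{d}{d\tau}\widetilde X = M_{\widetilde X}\widetilde X$. So the whole proof is a chain-rule computation together with the observation that $M$ is insensitive to the isometry and scales correctly.

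Concretely, I would proceed as follows. First, write $\widetilde X(\tau) = c\,X(c^\beta\tau)E$. Differentiating in $\tau$ and using the chain rule gives $\frac{d}{d\tau}\widetilde X(\tau) = c\cdot c^\beta\,\dot X(c^\beta\tau)\,E = c^{1+\beta}\,\bigl(M_{X(c^\beta\tau)}X(c^\beta\tau)\bigr)E$, where in the last step I invoke the hypothesis that $X$ solves \eqref{flowmatrix2}. Here it matters that $E$ acts on the right (as matrix multiplication by the rotational part plus translation by a constant matrix), so it commutes past the left multiplication by $M$; for the translational part one needs $M_{X}$ applied to a constant matrix to vanish, which is exactly part (3) of Proposition \ref{prop:M properties} ($M_X\vec 1 = 0$). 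Thus $\bigl(M_{X}X\bigr)E = M_{X}(XE)$. Then I rewrite $c^{1+\beta}M_{X(c^\beta\tau)}(X(c^\beta\tau)E)$ as $M_{X(c^\beta\tau)}\bigl(c^{1+\beta}X(c^\beta\tau)E\bigr) = M_{X(c^\beta\tau)}\bigl(c^\beta\,\widetilde X(\tau)\bigr) = c^\beta M_{X(c^\beta\tau)}\widetilde X(\tau)$.

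Finally I match the subscript of $M$ to $\widetilde X$. By parts (1) and (2) of Proposition \ref{prop:M properties}, $M_{\widetilde X(\tau)} = M_{cX(c^\beta\tau)E} = M_{cX(c^\beta\tau)} = c^\beta M_{X(c^\beta\tau)}$. Substituting this into the previous line gives $\frac{d}{d\tau}\widetilde X(\tau) = M_{\widetilde X(\tau)}\widetilde X(\tau)$, which is the claim.

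There is no real obstacle here; the lemma is a bookkeeping consequence of Proposition \ref{prop:M properties}, and the only point requiring a moment's care is the interchange of the isometry $E$ with the matrix $M_X$ — specifically checking that the translational component is killed by $M_X\vec 1 = 0$ and that the rotational component, acting on the right, commutes with multiplication by $M_X$ on the left. I would state that interchange explicitly as the single nontrivial step and then let the scaling identities $M_{cX} = c^\beta M_X$ and $M_{XE} = M_X$ finish the argument.
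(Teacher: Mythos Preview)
Your approach is exactly the paper's: the lemma is simply asserted to follow from Proposition~\ref{prop:M properties}, and you carry out that computation explicitly using the three listed properties.

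One small slip worth cleaning up: the identity $(M_X X)E = M_X(XE)$ that you invoke is literally false when $E$ has a nonzero translational part $T$, since the left side is $(M_X X)R + T$ while the right side is $M_X(XR+T) = (M_X X)R$ (here is where $M_X\vec 1 = 0$ is used). The argument is easily repaired: you never needed the full $E$ on the derivative side in the first place, because $T$ is constant in $\tau$ and hence $\frac{d}{d\tau}\widetilde X = c^{1+\beta}(M_X X)R$ directly. This then matches $M_{\widetilde X}\widetilde X = c^\beta M_X(cXR+T) = c^{1+\beta}(M_X X)R$ via properties (1)--(3), and the proof goes through.
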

%
%
%

\begin{remark}
  This invariant property together with the uniqueness Theorem \ref{exist} below  implies that similar polygons evolve in a similar manner under 
  the $\beta$-polygon flow. Let $X$ and $Y$ be solutions of (\ref{flowmatrix}) such that $Y(0)=cX(0)E$ for some number $c$ and some Euclidean isometry $E$. Then $Y(\tau(t))=cX(t)E$.
\end{remark}
We can describe the fixed points explicitly.
\begin{proposition}\label{prop:fixed points}
The fixed points of the flow are precisely polygons of the form $X=(X_0,\ldots, X_{N-1})$ such that $X_0=X_1=\cdots = X_{N-1}$. We call such polygons \emph{points}.
\end{proposition}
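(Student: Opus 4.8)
The plan is to recognize a fixed point as a solution of $M_X X = 0$ and then to exploit the weighted graph--Laplacian structure of $M_X$ recorded in the Remark above. One containment is immediate: if $X_0 = X_1 = \cdots = X_{N-1}$, then every edge length $l_j$ is zero, so the right-hand side of (\ref{flowmatrix}) vanishes identically (equivalently $M_X$ is the zero matrix, since by Proposition \ref{prop:M properties} it depends only on the $l_j$), and $X$ is a fixed point. The content is the reverse implication.

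For the converse, suppose $X$ is a fixed point, so $dX_j/dt = 0$ for all $j$. I would take the Euclidean inner product of the evolution equation (\ref{flowmatrix}) with $X_j$ and sum over $j$, so that $\sum_{j} \langle dX_j/dt, X_j\rangle = 0$. A short cyclic index shift --- the discrete summation by parts underlying the identity $M_X = -L$ for the cycle Laplacian $L$ with nonnegative edge weights $l_j^\beta$ --- rewrites this sum as $-\sum_{j=0}^{N-1} l_j^\beta |X_{j+1} - X_j|^2 = -\sum_{j=0}^{N-1} l_j^{\beta+2}$. Hence $\sum_{j=0}^{N-1} l_j^{\beta+2} = 0$. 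Since each $l_j \ge 0$ and $\beta + 2 > 0$, every term is nonnegative, forcing $l_j = 0$, i.e. $X_{j+1} = X_j$, for all $j$; traversing the cycle, all vertices coincide and $X$ is a point.

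I do not expect a genuine obstacle here: the only care needed is in the cyclic bookkeeping of the summation by parts and in noting that the final step really uses $\beta + 2 > 0$, so that a single vanishing sum of nonnegative powers of the $l_j$ forces each $l_j$ to vanish. As an alternative to the pairing computation one can apply the nonnegative quadratic form $v \mapsto \sum_{j=0}^{N-1} l_j^\beta (v_{j+1}-v_j)^2$ of $L$ separately to the two columns of the matrix form of $X$ (each of which lies in $\ker M_X$), obtaining the same conclusion; and one can cross-check the whole statement against the fact that the $\beta$-polygon flow is the negative gradient flow of $F_{\beta+2}$, so that $\tfrac{d}{dt}F_{\beta+2}(X) = -\sum_j |dX_j/dt|^2$ indeed vanishes exactly at the configurations just described.
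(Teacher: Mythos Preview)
Your argument is correct. The paper actually states this proposition without proof, so there is nothing to compare against; your write-up supplies a clean justification that the authors presumably regarded as routine. The summation-by-parts computation
\[
0 \;=\; \sum_{j} \Bigl\langle \frac{dX_j}{dt},\, X_j \Bigr\rangle
\;=\; -\sum_{j=0}^{N-1} l_j^{\beta}\,|X_{j+1}-X_j|^2
\;=\; -\sum_{j=0}^{N-1} l_j^{\beta+2}
\]
is exactly the Dirichlet-form identity for the weighted cycle Laplacian noted in the Remark, and the conclusion $l_j=0$ for all $j$ follows as you say. One small quibble: in the easy direction you write that ``$M_X$ is the zero matrix'' when all vertices coincide. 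This is literally true only for $\beta>0$ (since $0^0=1$ in the $\beta=0$ case gives $M_X=M$); nonetheless the conclusion $M_X X=0$ still holds for $\beta=0$ because $M\vec 1=0$, so the statement is unaffected.
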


It will be important to control the $(2+\beta)$-norm of the polygon under the flow
in order to use appropriate compactness theorems.
\begin{definition}
  If $X=(X_0,\cdots,X_{N-1})\in\mathbb{C}^N$, for any $p \geq 1$, we define the $p$-norm of $X$ by
  \begin{equation}\label{lpnrom}
    \Vert X\Vert_p=\left(\sum_{k=0}^{N-1}|X_k|^p\right)^{1/p}.
  \end{equation}
\end{definition}
We now have the following a priori bound on the $2+\beta$-norm.
\begin{lemma}[A priori bound]\label{bdlem}
  Let $\alpha=2+\beta$ with $\beta>0$ and $\mathbf{Q}=(Q,\cdots,Q)\in \mathbb{C}^N$. If $X(t)$ is the solution of the initial value problem for (\ref{flowmatrix}), then the $\alpha$-norm of $X-\mathbf{Q}$ is monotonicity decreasing, i.e., $\Vert X(t)-\mathbf{Q}\Vert_\alpha\leq \Vert X(\tau)-\mathbf{Q}\Vert_\alpha$ for $t>\tau.$
\end{lemma}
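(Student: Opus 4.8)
The plan is to differentiate $\|X(t)-\mathbf{Q}\|_\alpha^\alpha=\sum_{j}|X_j-Q|^\alpha$ along the flow and show the derivative is nonpositive. First I would reduce to the case $Q=0$. Since $M_X$ depends only on the edge lengths, Proposition \ref{prop:M properties}(1) gives $M_{X-\mathbf{Q}}=M_X$, while $M_X\mathbf{Q}=0$ because $\mathbf{Q}$ is a scalar multiple of $\vec{1}$ and $M_X\vec{1}=0$. Hence $Z:=X-\mathbf{Q}$ solves the same system $\dot Z=M_Z Z$, and it suffices to prove that $t\mapsto\|Z(t)\|_\alpha$ is nonincreasing.

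Since the right-hand side of (\ref{flowmatrix}) is smooth near a genuine polygon and $z\mapsto|z|^\alpha$ is $C^1$ for $\alpha>1$, we may compute
\begin{equation*}
\frac{d}{dt}\|Z\|_\alpha^\alpha=\alpha\sum_{j}|Z_j|^{\alpha-2}\,\mathrm{Re}\big(\overline{Z_j}\,\dot Z_j\big)=\alpha\,\mathrm{Re}\sum_{j}\overline{g_j}\,(M_Z Z)_j,\qquad g_j:=|Z_j|^{\alpha-2}Z_j,
\end{equation*}
with the convention that terms with $Z_j=0$ vanish. Now I would use the weighted graph-Laplacian structure of $M_Z$ pointed out in the remark following Proposition \ref{prop:M properties}: summation by parts on the $N$-cycle with edge weights $l_k^\beta$ gives $\sum_j\overline{g_j}(M_Z Z)_j=-\sum_j l_j^\beta(\overline{g_{j+1}}-\overline{g_j})(Z_{j+1}-Z_j)$, so
\begin{equation*}
\frac{d}{dt}\|Z\|_\alpha^\alpha=-\alpha\sum_{j}l_j^\beta\,\mathrm{Re}\Big[\big(\overline{g_{j+1}}-\overline{g_j}\big)\big(Z_{j+1}-Z_j\big)\Big].
\end{equation*}

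The heart of the argument is then the pointwise inequality: for all $a,b\in\mathbb{C}$ and all $\alpha\geq2$,
\begin{equation*}
\mathrm{Re}\Big[\big(|b|^{\alpha-2}\overline{b}-|a|^{\alpha-2}\overline{a}\big)(b-a)\Big]\geq0.
\end{equation*}
Expanding, the left-hand side equals $|b|^\alpha+|a|^\alpha-\big(|b|^{\alpha-2}+|a|^{\alpha-2}\big)\,\mathrm{Re}(\overline{a}b)$; since $\mathrm{Re}(\overline{a}b)\leq|a|\,|b|$ and $|b|^{\alpha-2}+|a|^{\alpha-2}\geq0$, this is at least $|b|^\alpha+|a|^\alpha-\big(|b|^{\alpha-2}+|a|^{\alpha-2}\big)|a|\,|b|=\big(|b|^{\alpha-1}-|a|^{\alpha-1}\big)\big(|b|-|a|\big)\geq0$, the last step because $t\mapsto t^{\alpha-1}$ is nondecreasing on $[0,\infty)$. (Conceptually, $|z|^{\alpha-2}z=\nabla\big(\tfrac1\alpha|z|^\alpha\big)$ is a monotone map because $\tfrac1\alpha|z|^\alpha$ is convex.) Applying this with $a=Z_j$, $b=Z_{j+1}$, together with $l_j^\beta\geq0$, makes every term of the last sum nonnegative, hence $\frac{d}{dt}\|Z\|_\alpha^\alpha\leq0$; since $s\mapsto s^{1/\alpha}$ is increasing, $\|X(t)-\mathbf{Q}\|_\alpha$ is nonincreasing, which is the claim. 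I do not foresee a real obstacle here; the only points requiring care are the differentiability of $|z|^\alpha$ at $z=0$ (fine for $\alpha>1$) and the index bookkeeping in the summation by parts around the cycle.
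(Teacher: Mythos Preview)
Your argument is correct and follows essentially the same route as the paper: differentiate $\|X-\mathbf{Q}\|_\alpha^\alpha$, rewrite the result as a sum over edges with weights $l_j^\beta$, apply Cauchy--Schwarz to $\mathrm{Re}(\bar a b)\le |a||b|$, and factor what remains as $(|b|^{\alpha-1}-|a|^{\alpha-1})(|b|-|a|)\ge 0$. The only cosmetic differences are that you first reduce to $Q=0$ and explicitly isolate the summation-by-parts/monotone-operator structure, whereas the paper carries $Q$ through and writes the computation in one block; the algebra and the two inequalities invoked are identical.
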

\begin{proof}
  A direct calculation gives:
  \begin{displaymath}
  \begin{split}
  &  \frac{d}{dt}\frac{1}{\alpha}\Vert X-\mathbf{Q}\Vert_\alpha^\alpha
  \\
      &=-\sum_{j=0}^{N-1}l_j^\beta\left[|X_{j+1}-Q|^{\beta+2}+|X_j-Q|^{\beta+2}-(|X_{j+1}-Q|^\beta+|X_j-Q|^\beta)(X_j-Q)\cdot( X_{j+1}-Q)\right]\\
	 &\leq -\sum_{j=0}^{N-1}l_j^\beta\left[(|X_{j+1}-Q|^{\beta+1}-|X_j-Q|^{\beta+1})(|X_{j+1}-Q|-|X_j-Q|)\right] \\
	 &\leq 0
  \end{split}
\end{displaymath}
where the first inequality comes from Cauchy-Schwarz.
The result follows.
\end{proof}

We are now able to prove that the flow exists for all time and shrinks to its
center of mass.

\begin{theorem}[Long time existence/uniqueness]\label{exist} For any initial polygon $X=(X_0,\ldots, X_{N-1})$, there is a unique solution to (\ref{flowmatrix}) for all $t>0$ and the solution converges to the center of mass for $X$, $\frac1N \sum_{j=0}^{N-1} X_j$, as $t \to \infty$. 　
\end{theorem}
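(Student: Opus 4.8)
The plan is to prove the three assertions in order: local-in-time existence and uniqueness, then a priori bounds that promote this to a global solution, then convergence to the center of mass.

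First I would set up \emph{short-time existence and uniqueness}. The right-hand side of (\ref{flowmatrix}) is a polynomial in the coordinates of the $X_j$ multiplied by powers $l_j^\beta = |X_{j+1}-X_j|^\beta$. Since $\beta > 0$, the map $z \mapsto |z|^\beta z$ is $C^1$ on all of $\mathbb{C}$ (its derivative vanishes at the origin), so the vector field on the right of (\ref{flowmatrix2}) is locally Lipschitz on all of $\mathbb{C}^N$ — crucially, including at configurations where some edge length is zero, which is exactly the point where the competing flows in the introduction fail. Picard--Lindel\"of then gives a unique maximal solution on some interval $[0,T)$.

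Second, I would show $T = \infty$ by ruling out blow-up. Apply Lemma \ref{bdlem} with $Q$ the center of mass (or with $Q=0$ after translating): $\Vert X(t) - \mathbf{Q}\Vert_\alpha$ is nonincreasing, hence the solution stays in a fixed compact set of $\mathbb{C}^N$ for all $t \in [0,T)$. A solution to an ODE with locally Lipschitz right-hand side that remains in a compact set cannot escape in finite time, so $T = \infty$. One should note here that the center of mass $\frac1N\sum_j X_j$ is itself constant in $t$: summing (\ref{flowmatrix}) over $j$, each term $l_j^\beta(X_{j+1}-X_j)$ appears once with each sign, so $\frac{d}{dt}\sum_j X_j = 0$. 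Thus without loss of generality we may translate so that the center of mass is $0$, and then $\mathbf{Q} = 0$ in Lemma \ref{bdlem}.

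Third, \emph{convergence to the center of mass}. By the a priori bound, $\frac1\alpha\Vert X(t)\Vert_\alpha^\alpha$ is monotone and bounded below, so it converges and its derivative — the nonpositive quantity displayed in the proof of Lemma \ref{bdlem} — is integrable on $[0,\infty)$. This forces a sequence $t_k \to \infty$ along which that derivative tends to $0$; since each summand $l_j^\beta\big[(|X_{j+1}|^{\beta+1}-|X_j|^{\beta+1})(|X_{j+1}|-|X_j|)\big]$ is nonnegative, every such term tends to $0$, which (using that the $X_j(t_k)$ lie in a compact set, pass to a further subsequence with $X(t_k)\to X_\infty$) forces $X_\infty$ to be a fixed point, i.e. by Proposition \ref{prop:fixed points} a point; and since the center of mass is preserved, that point is $0$. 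To upgrade subsequential convergence to full convergence, I would use that $\Vert X(t)\Vert_\alpha$ is monotone: once it is small along one $t_k$ it stays small, so $\Vert X(t)\Vert_\alpha \to 0$, hence $X(t) \to \mathbf{0}$, the center of mass.

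The main obstacle is the last step — turning the integrability of the energy dissipation into genuine convergence of the flow, rather than just subconvergence. The clean way out is precisely the monotonicity in Lemma \ref{bdlem}: it makes the $\alpha$-norm itself (not merely a subsequence) converge, and then one only needs to identify the limiting value of $\Vert X\Vert_\alpha$ as $0$, which the dissipation identity and Proposition \ref{prop:fixed points} supply. A secondary technical point worth stating carefully is the $C^1$ regularity of $z\mapsto|z|^\beta z$ at the origin when $0<\beta<1$ (it is $C^1$ but not $C^2$), which is all that Picard--Lindel\"of needs; for $\beta=0$ the system is linear and handled in \cite{dav}, so the hypothesis $\beta>0$ is what is in force here.
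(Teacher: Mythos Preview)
Your outline matches the paper's proof closely: short-time existence via local Lipschitz regularity, global existence via the a priori bound of Lemma~\ref{bdlem}, subsequential convergence to a point, and then the monotonicity of $\Vert X(t)-Z\Vert_\alpha$ to upgrade to full convergence. Your remark that $z\mapsto |z|^\beta z$ is $C^1$ at the origin for $\beta>0$ is a welcome clarification the paper glosses over.

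There is, however, a real gap in your third step. You argue that along a subsequence the derivative of $\tfrac{1}{\alpha}\Vert X\Vert_\alpha^\alpha$ tends to zero, and then conclude that each summand
\[
l_j^\beta\bigl(|X_{j+1}|^{\beta+1}-|X_j|^{\beta+1}\bigr)\bigl(|X_{j+1}|-|X_j|\bigr)
\]
tends to zero, whence $X_\infty$ is a point. But these are only the \emph{lower-bound} summands from the proof of Lemma~\ref{bdlem}, and their vanishing does not force $X_\infty$ to be a point: any configuration with all $|X_j|$ equal (for instance a regular $N$-gon centered at the origin) makes every one of these terms zero while not being a fixed point. What you need is that the \emph{full} summands
\[
l_j^\beta\Bigl[|X_{j+1}|^{\beta+2}+|X_j|^{\beta+2}-\bigl(|X_{j+1}|^\beta+|X_j|^\beta\bigr)X_j\cdot X_{j+1}\Bigr]
\]
vanish at $X_\infty$; each of these is nonnegative (the first displayed inequality in Lemma~\ref{bdlem} is Cauchy--Schwarz), and their vanishing does force $X_j=X_{j+1}$ for every $j$, since equality in Cauchy--Schwarz together with $|X_j|=|X_{j+1}|$ gives $X_j=X_{j+1}$. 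So your argument is easily repaired by citing the correct summand.

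The paper sidesteps this issue by using the energy $F_\alpha$ rather than $\Vert X\Vert_\alpha$ to locate the limit: since $\tfrac{d}{dt}F_\alpha=-|M_XX|^2$, the vanishing of the dissipation is literally the fixed-point condition $M_XX=0$, and Proposition~\ref{prop:fixed points} applies directly. The paper then switches to $\Vert X-Z\Vert_\alpha$ (Lemma~\ref{bdlem} with $Q=Z$) only for the upgrade from subsequential to full convergence. Your variant---using one monotone quantity throughout---is tidier once the gap above is closed.
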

\begin{proof}
  The standard theory in the ordinary differential equations says the solution $X(t)$ of (\ref{flowmatrix}) exists at $[0,T)$ for some $T>0.$ Since the system has the
  form $\frac{d}{dt}X =F(X)$ where $dF$ is Lipschitz,
  the solution is also unique by the standard theory.

Applying Lemma \ref{bdlem} with $Q$ chosen to be the origin, we have $X(t)\subseteq B(R)$ for some closed Euclidean ball $B(R)$ for all $t>0$.  However, since $M_{X}:\mathbb{C}^{N}\rightarrow\mathbb{C}^{N}$ is a continuous function on $\mathbb{C}^{N}$, the extension theorem (p 12 of \cite{Hartman}) says that $X(t)$ would become unbounded as $t\rightarrow T$ if $T<\infty.$ Hence $T=\infty.$


In order to show that the flow converges to the center of mass, we first show that
 a subsequence converges to a point. Since $F_\alpha(X(t))$ is decreasing and
  bounded from below, as $t\rightarrow \infty$ we have that
  $\frac{d}{dt}F_\alpha (X(t)) \rightarrow 0$. Hence, by
  Proposition \ref{prop:fixed points}, any subsequence that converges
  to a polygon, converges to a point.

  Since $X(t)$ is in a bounded set, there is a subsequence $t_k$ such that
  $X(t_k)$ converges to a point, $Z$. By Lemma \ref{bdlem}, we have that
  $\Vert X(t)-Z\Vert_\alpha$ is decreasing in $t$ and since a subsequence converges
  to zero, we must have that $\Vert X(t)-Z\Vert_\alpha$ converges to zero.

Finally, the flow (\ref{flowmatrix}) preserves the center of mass of $X$, i.e., $$\frac{d}{dt}\frac{1}{N}\sum_{i=0}^{N-1}X_i=0.$$ Therefore, $Z$ must be the center of mass of $X$.

\end{proof}

A direct calculation gives the evolution of the edge lengths and angles.
\begin{lemma}
  Under the flow (\ref{flowmatrix}), we have
  \begin{equation}\label{length}
    \frac{dl_j}{dt}=-2l_j^{\beta+1}-l_{j+1}^{\beta+1}\cos\theta_{j+1}-l_{j-1}^{\beta+1}\cos\theta_j,
  \end{equation}
  \begin{equation}\label{angle}
    \frac{d{\theta}_j}{dt}=\frac{1}{l_jl_{j-1}}\left[(l_{j-1}^{\beta+2}+l_j^{\beta+2})\sin\theta_j-l_{j+1}^{\beta+1}l_{j-1}\sin\theta_{j+1}-l_{j-2}^{\beta+1}l_j\sin\theta_{j-1}\right],
  \end{equation} for $j=0,...,N-1.$
\end{lemma}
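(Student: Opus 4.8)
The plan is to differentiate the two geometric quantities directly from their definitions, feeding in the flow equation (\ref{flowmatrix}). It is convenient to abbreviate the edge vectors $E_j := X_{j+1}-X_j$, so that $l_j = |E_j|$. From (\ref{flowmatrix}) one computes
\[
\dot E_j = \dot X_{j+1}-\dot X_j = l_{j+1}^\beta E_{j+1} - 2 l_j^\beta E_j + l_{j-1}^\beta E_{j-1},
\]
the two $l_j^\beta E_j$ contributions having combined. This single identity will drive both formulas.

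For (\ref{length}) I would differentiate $l_j^2 = E_j\cdot E_j$ to get $l_j\dot l_j = E_j\cdot\dot E_j$, substitute the expression for $\dot E_j$, and evaluate the three inner products. By Definition \ref{defangle} the interior angle at a vertex is the angle between the two edge vectors emanating from it, so $E_{j+1}\cdot E_j = \overrightarrow{X_{j+1}X_{j+2}}\cdot(-\overrightarrow{X_{j+1}X_j}) = -l_{j+1}l_j\cos\theta_{j+1}$ and $E_{j-1}\cdot E_j = -\overrightarrow{X_jX_{j-1}}\cdot\overrightarrow{X_jX_{j+1}} = -l_{j-1}l_j\cos\theta_j$, while $E_j\cdot E_j = l_j^2$. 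Collecting terms and dividing by $l_j$ gives $\dot l_j = -2l_j^{\beta+1} - l_{j+1}^{\beta+1}\cos\theta_{j+1} - l_{j-1}^{\beta+1}\cos\theta_j$, which is (\ref{length}).

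For (\ref{angle}) I would pass to the complex picture, writing $E_j = l_j e^{i\phi_j}$ with $\phi_j = \arg E_j$. Definition \ref{defangle} says $e^{i\theta_j}(E_j/l_j) = -E_{j-1}/l_{j-1}$, hence $\theta_j \equiv \pi + \phi_{j-1} - \phi_j \pmod{2\pi}$ and therefore $\dot\theta_j = \dot\phi_{j-1} - \dot\phi_j$. Since $\phi_j = \operatorname{Im}\log E_j$, we have $\dot\phi_j = \operatorname{Im}(\dot E_j/E_j)$. Dividing the identity for $\dot E_j$ by $E_j$ and using $E_{j+1}/E_j = -(l_{j+1}/l_j)e^{-i\theta_{j+1}}$ and $E_{j-1}/E_j = -(l_{j-1}/l_j)e^{i\theta_j}$ (both reformulations of the same angle relation), the $-2l_j^\beta$ term becomes real, and taking imaginary parts yields
\[
\dot\phi_j = \frac{l_{j+1}^{\beta+1}}{l_j}\sin\theta_{j+1} - \frac{l_{j-1}^{\beta+1}}{l_j}\sin\theta_j.
\]
Substituting this into $\dot\theta_j = \dot\phi_{j-1} - \dot\phi_j$, combining the two resulting $\sin\theta_j$ terms over the common denominator $l_jl_{j-1}$, and rewriting the remaining two terms with that same denominator produces (\ref{angle}).

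The only real obstacle is sign bookkeeping for the angles: one must be careful that $\theta_j$ in Definition \ref{defangle} is the interior angle measured counterclockwise from $\overrightarrow{X_jX_{j+1}}$ to $\overrightarrow{X_jX_{j-1}}$, which pins down every sign in the $\cos\theta$ and $\sin\theta$ terms. Using the complex representation for the angle computation (rather than differentiating $\cos\theta_j = -E_{j-1}\cdot E_j/(l_{j-1}l_j)$ and dividing through by $-\sin\theta_j$) avoids an ambiguous square-root sign when solving for $\dot\theta_j$ and automatically accommodates reflex angles; that is why I would organize that step through $\phi_j$. Everything else is routine substitution.
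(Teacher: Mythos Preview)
Your proof is correct. The derivation of (\ref{length}) is essentially identical to the paper's: both compute $\dot E_j = l_{j+1}^\beta E_{j+1} - 2l_j^\beta E_j + l_{j-1}^\beta E_{j-1}$ and then dot with $E_j/l_j$.

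For (\ref{angle}) you take a genuinely different route. The paper differentiates $\cos\theta_j = (X_{j+1}-X_j)\cdot(X_{j-1}-X_j)/(l_jl_{j-1})$ directly, obtaining $-\sin\theta_j\,\dot\theta_j = (\cdots)$ and then dividing through; along the way it needs the identity $(X_{j+1}-X_j)\cdot(X_{j-1}-X_{j-2}) = l_jl_{j-2}\cos(\theta_j+\theta_{j-1})$, which the authors single out in a remark as ``tricky.'' Your complex-argument approach, writing $\theta_j = \pi + \phi_{j-1} - \phi_j$ and computing $\dot\phi_j = \operatorname{Im}(\dot E_j/E_j)$, sidesteps that identity entirely and never divides by $\sin\theta_j$, so it remains valid at straight angles and handles the orientation bookkeeping automatically. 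The paper's method is slightly more elementary in that it stays purely in real inner-product language, but yours is cleaner and, as you note, more robust.
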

\begin{proof}
  We have
  \begin{equation}\label{verdiff}
     \frac{d{X}_{j+1}}{dt}-\frac{dX_j}{dt}=l_{j+1}^\beta(X_{j+2}-X_{j+1})-2l_j^\beta(X_{j+1}-X_j)+l_{j-1}^\beta(X_j-X_{j-1}).
  \end{equation}
Hence,
  \begin{displaymath}
    \begin{split}
       \frac{d}{dt}l_j& =\frac{1}{l_j}(X_{j+1}-X_j)\cdot\left(\frac{d{X}_{j+1}}{dt}-\frac{dX_j}{dt}\right) \\
        &=-2l_j^{\beta+1}-l_{j+1}^{\beta+1}\cos\theta_{j+1}-l_{j-1}^{\beta+1}\cos\theta_j.
    \end{split}
  \end{displaymath}
  Since $$\cos\theta_j=\frac{(X_{j+1}-X_j)\cdot(X_{j-1}-X_j)}{l_jl_{j-1}},$$ differentiating, we get
  \begin{displaymath}
      -\sin\theta_j\frac{d{\theta}_j}{dt}  =
%
    \frac{1}{l_jl_{j-1}}\left[-(l_{j-1}^{\beta+2}+l_j^{\beta+2})\sin^2\theta_j+l_{j+1}^{\beta+1}l_{j-1}\sin\theta_j\sin\theta_{j+1}+l_{j-2}^{\beta+1}l_j\sin\theta_j\sin\theta_{j-1}\right],
  \end{displaymath} which gives (\ref{angle}).
\end{proof}
\begin{remark}
One tricky part in the previous calculation is to show that $$(X_{j+1}-X_j)\cdot(X_{j-1}-X_{j-2})=l_jl_{j-2}\cos(\theta_j+\theta_{j-1}).$$
  To see this, recall that Definition \ref{defangle} says that by rotating $X_{j+1}-X_j$ counterclockwise an angle of $\pi+\theta_j$, we get a vector in the direction of $X_j-X_{j-1}$. Rotate this new vector counterclockwise by another $\pi+\theta_{j-1}$ and we get a vector in the direction of $X_{j-1}-X_{j-2}$.
\end{remark}

We close this section with numerical examples of the $\beta$-polygon flow on a heptagon and on a quadrilaterals. They indicate that the regular heptagon may be stable and that the square may be semistable, as described in Theorems \ref{stableshapethm5} and \ref{stableshapethm4}.

\begin{example}
 Figures \ref{hepta} and \ref{heptaseq} show, for the case $\beta=1, c_k=10^k, \tau=1$ and $N=7$, the evolution of a heptagon converging to a regular heptagon. Indeed, we start from some heptagon $X_0$ Figure \ref{hepta:a}, and evolve it under the flow (\ref{flowmatrix}) until time $\tau=1$ to obtain $X(1)$ in Figure \ref{heptaseq:a}. We use the rescaled heptagon $10X(1)$ as our new initial data and continue to evolve it under the flow (\ref{flowmatrix}) until the time $\tau=1$ to get $10X(10^1\cdot 1)$ in Figure \ref{heptaseq:b}. We rescale it by 10 again and repeat this process 6 times to obtain  Figures \ref{hepta} and \ref{heptaseq}. Comparing the polygon $10^5X(10^5)$ in Figure \ref{heptaseq:f} with the regular heptagon, we find
  \begin{displaymath}
      \sum_{i=0}^{6}\left(\theta_i-\frac{5\pi}{7}\right)^2=0.0252069, \quad  \sum_{i=0}^{6}\left(\frac{l_i}{l_{i+1}}-1\right)^2 =0.0107429,
  \end{displaymath}
  where $\theta_i$ and $l_i$ denote the angle and edge-length of $10^5X(10^5)$, respectively. It appears that the two errors become small and the polygons obtained in this process are converging to a regular heptagon.

  \begin{figure}[H]
  \centering
  \subfigure[]{
    \label{hepta:a} 
    \includegraphics[width=1.5in]{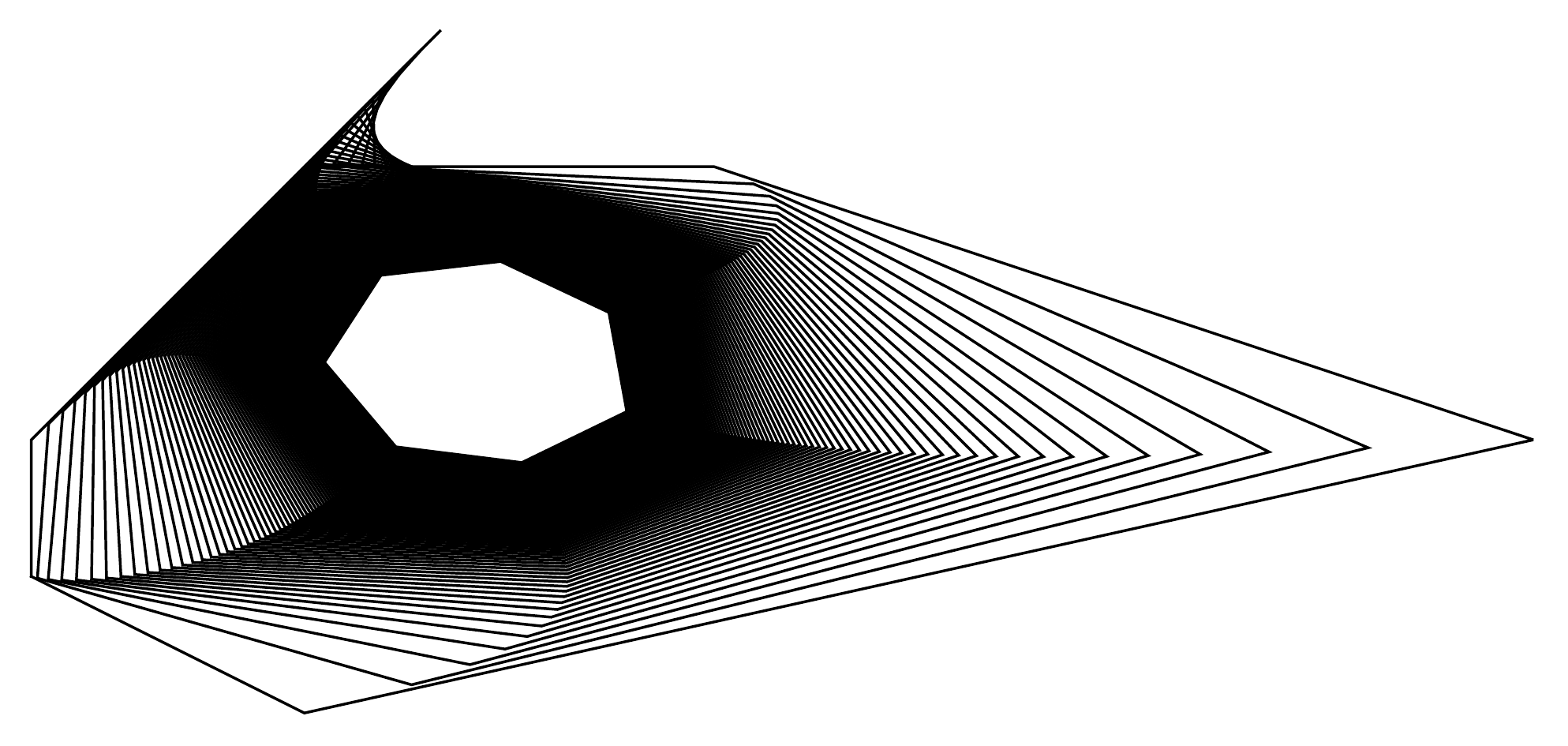}}
    \hspace{0.5in}
 \subfigure[]{
    \label{hepta:b} 
    \includegraphics[width=1.5in]{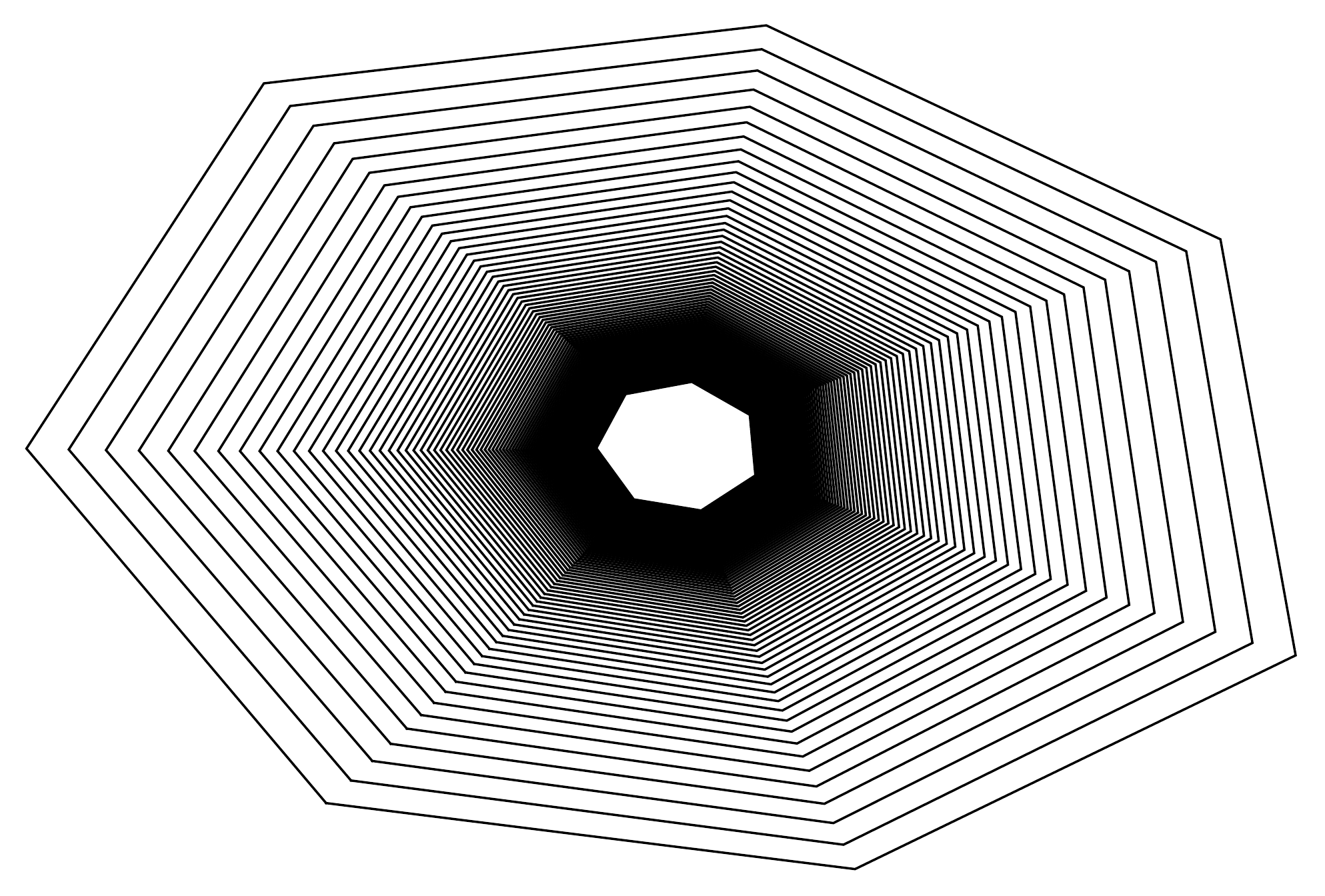}}
\hspace{0.5in}
    \subfigure[]{
    \label{hepta:c} 
    \includegraphics[width=1.5in]{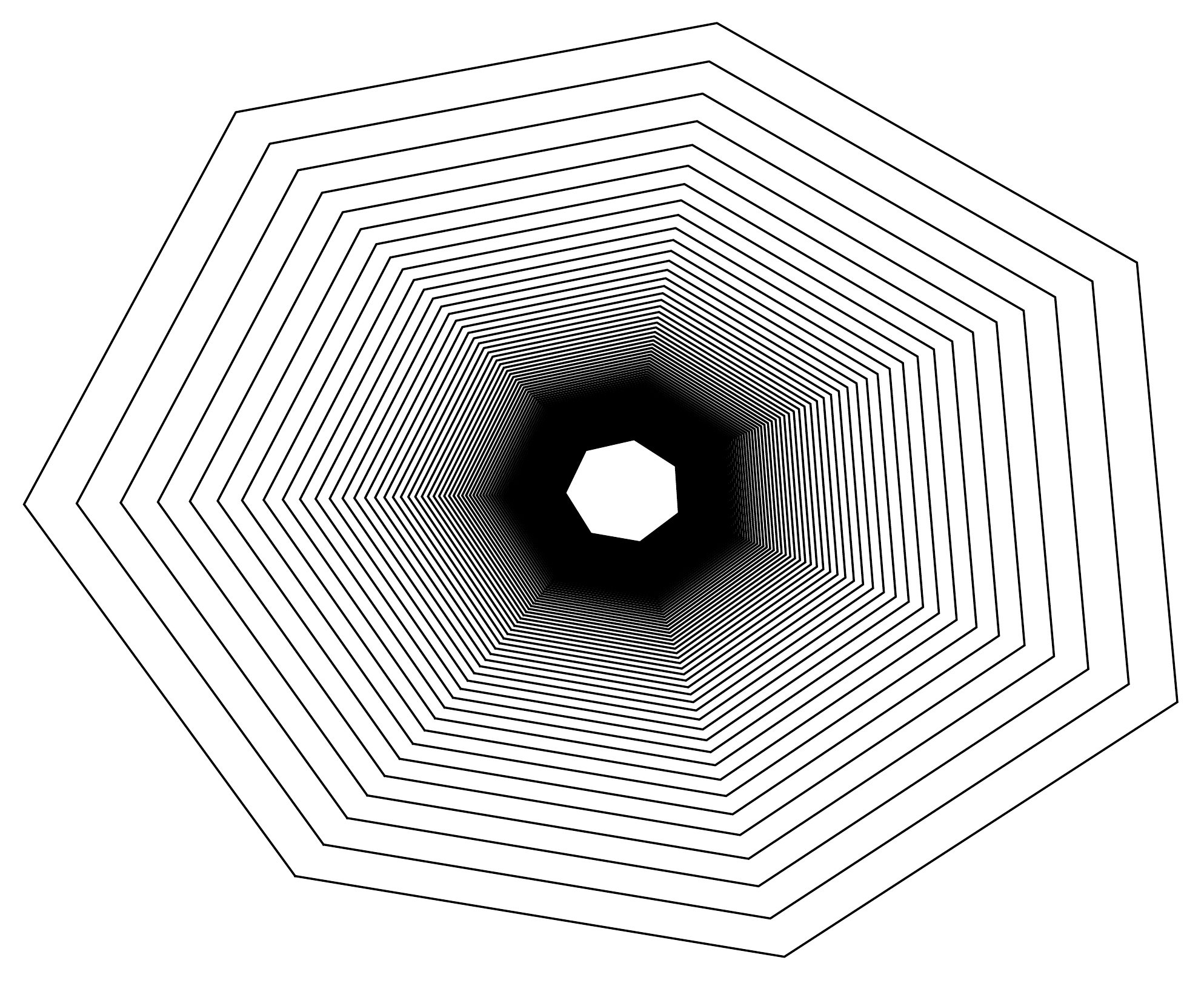}}

   \subfigure[]{
    \label{hepta:d} 
    \includegraphics[width=1.5in]{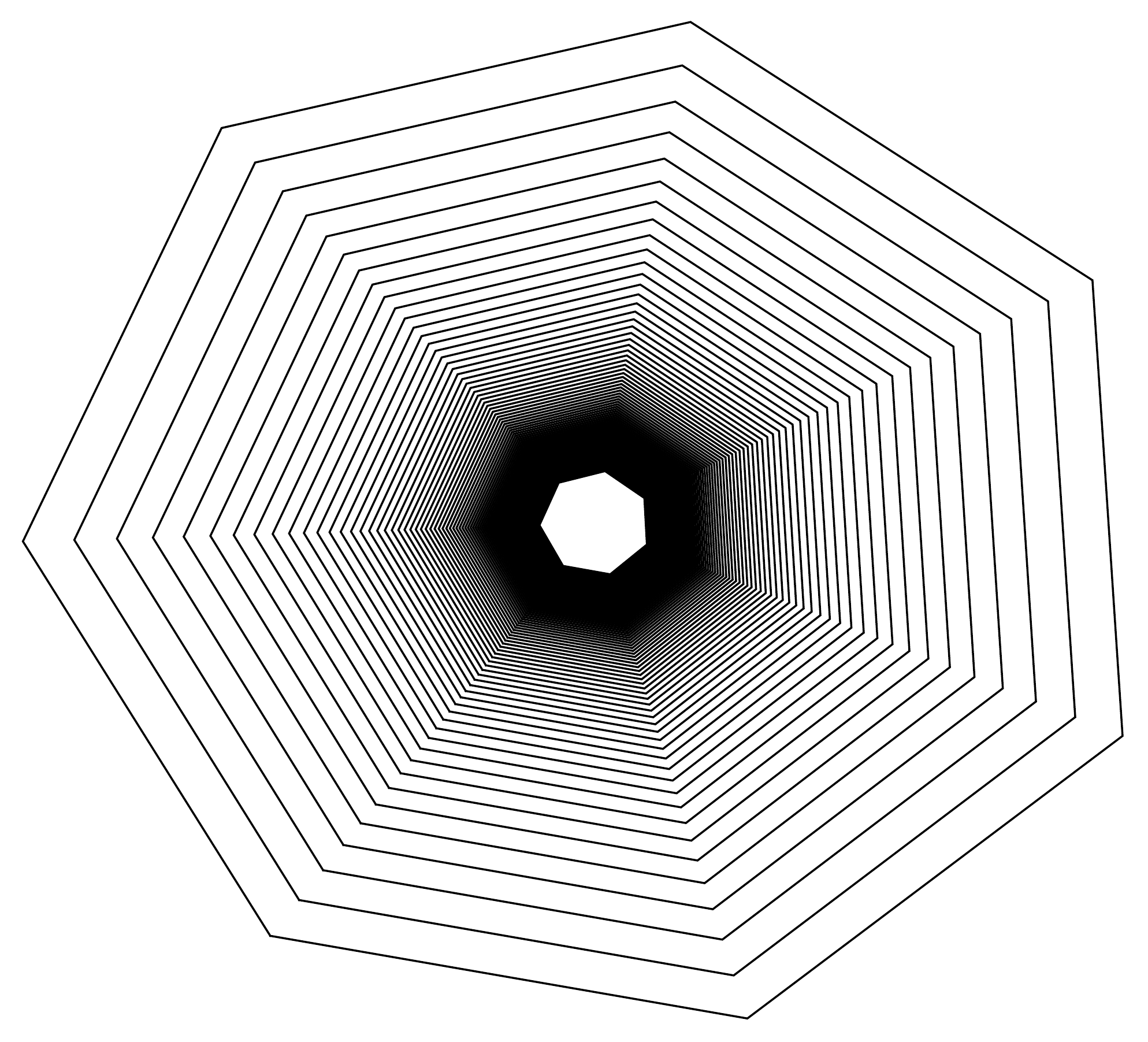}}
\hspace{0.5in}
   \subfigure[]{
    \label{hepta:e} 
    \includegraphics[width=1.5in]{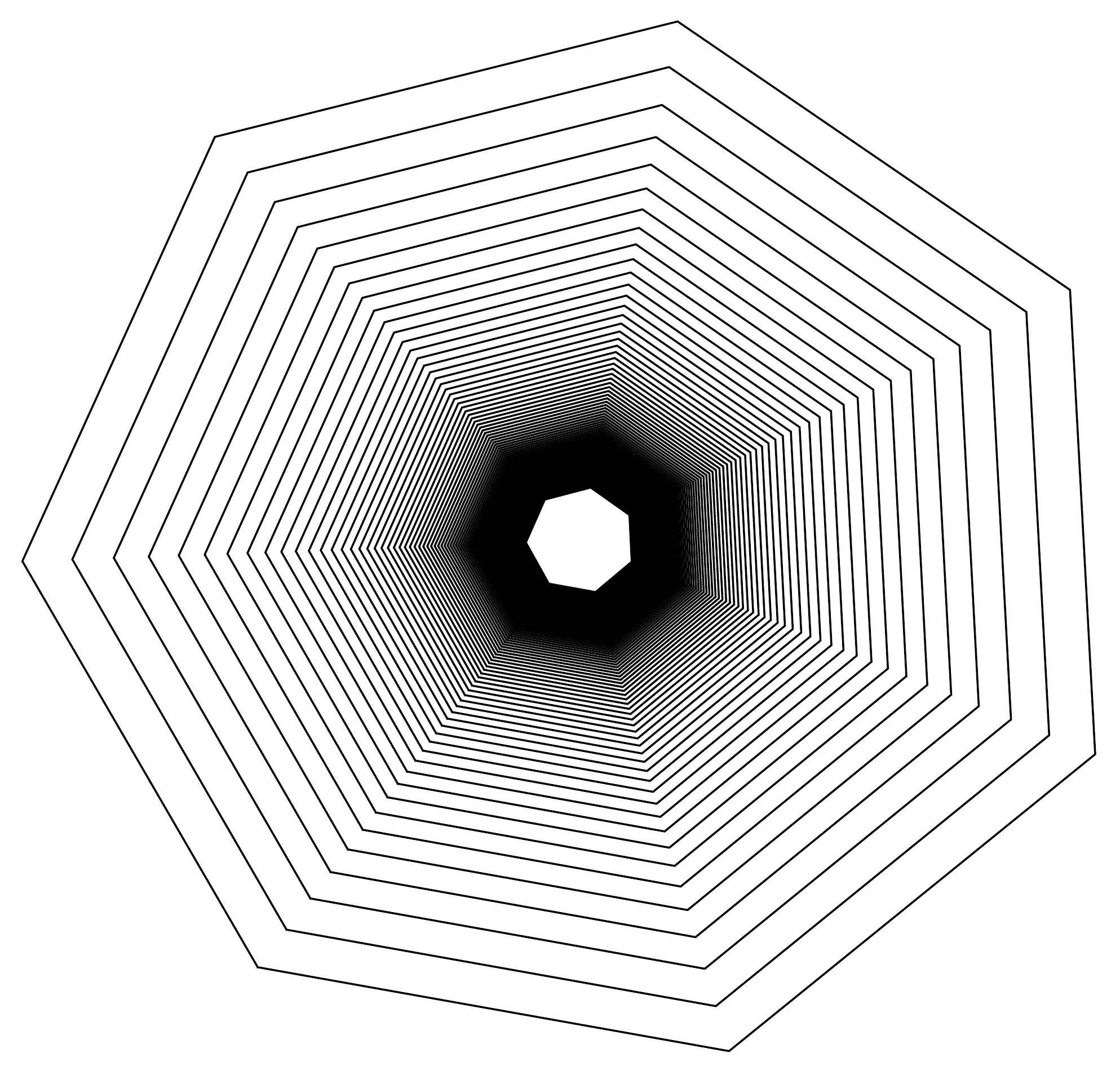}}
\hspace{0.5in}
   \subfigure[]{
    \label{hepta:f} 
    \includegraphics[width=1.5in]{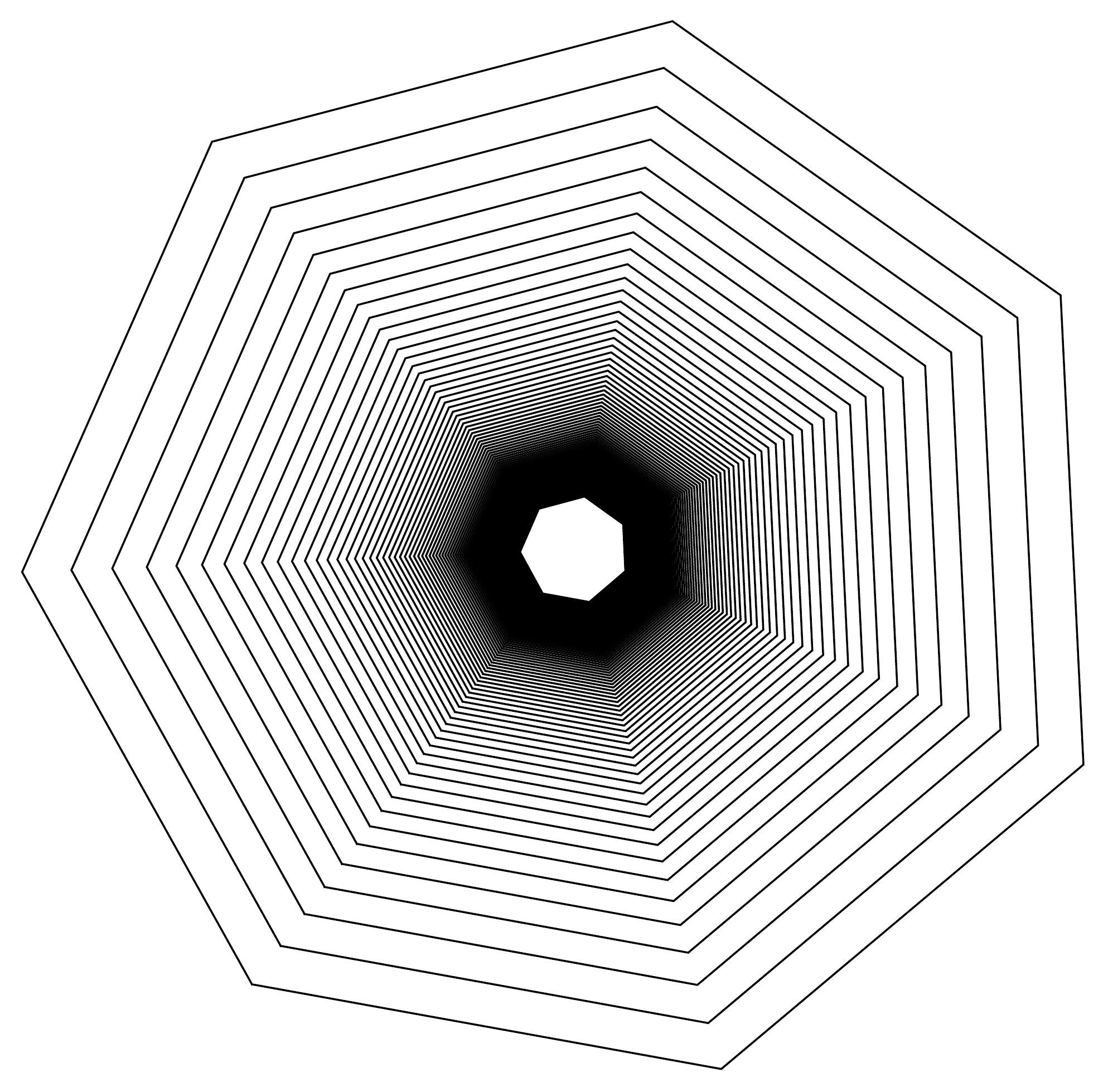}}

  \caption{Evolution of a heptagon with selected scalings in space and time.}
  \label{hepta} 
\end{figure}

\begin{figure}[H]
\centering
  \subfigure[$X(1)$]{
    \label{heptaseq:a} 
    \includegraphics[width=0.8in]{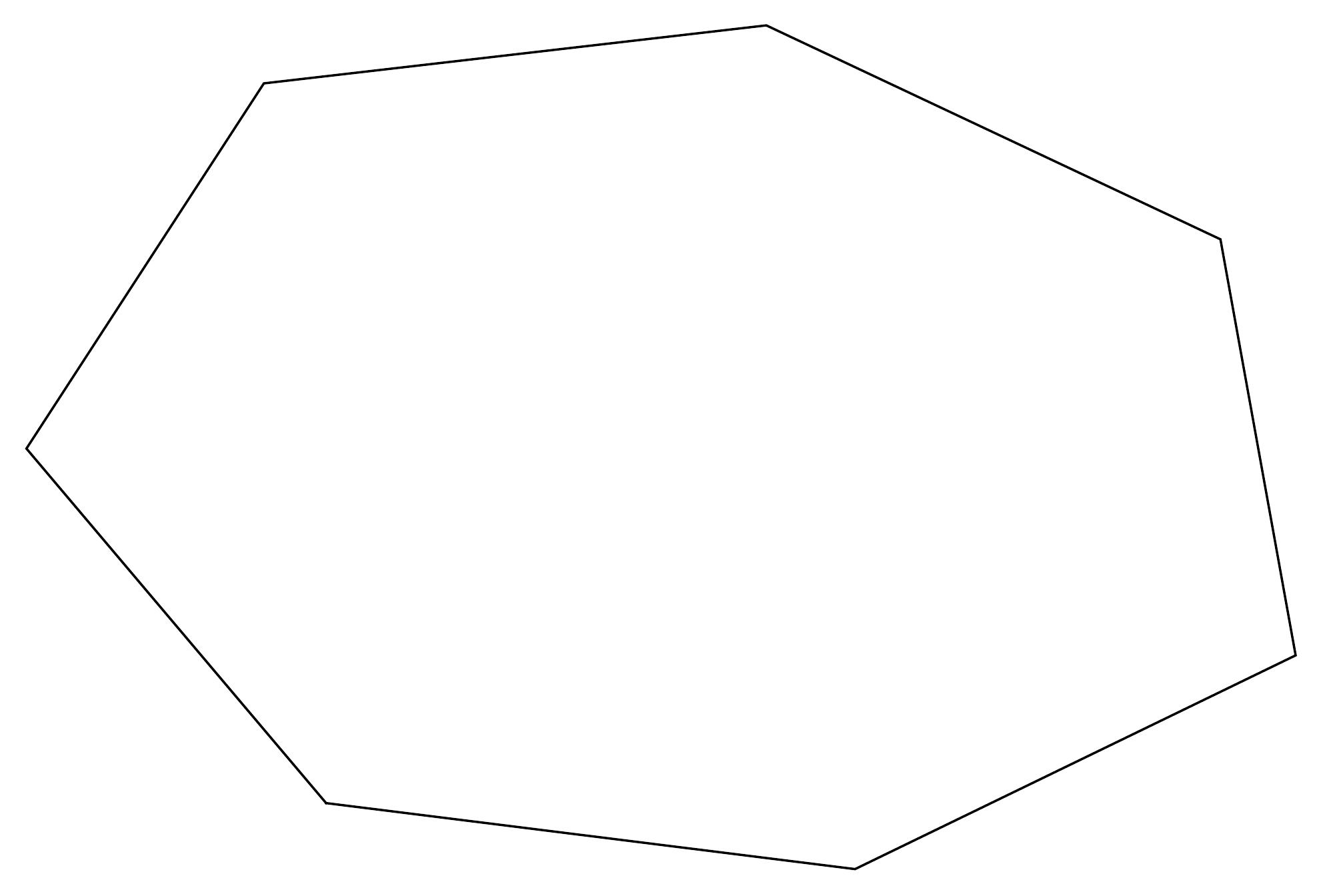}}
 \subfigure[$10X(10)$]{
    \label{heptaseq:b} 
    \includegraphics[width=0.8in]{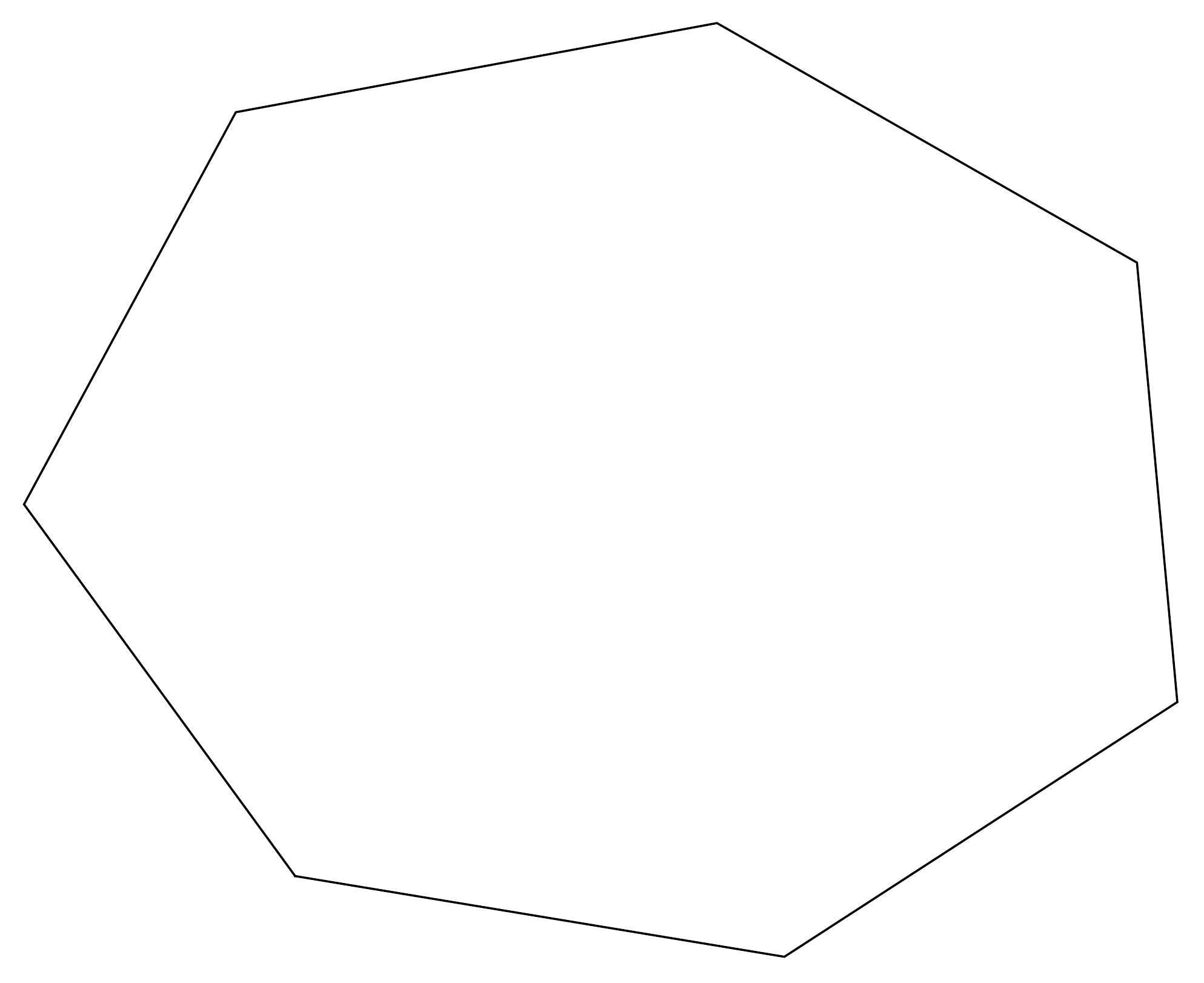}}
    \subfigure[$10^2X(10^2)$]{
    \label{heptaseq:c} 
    \includegraphics[width=0.8in]{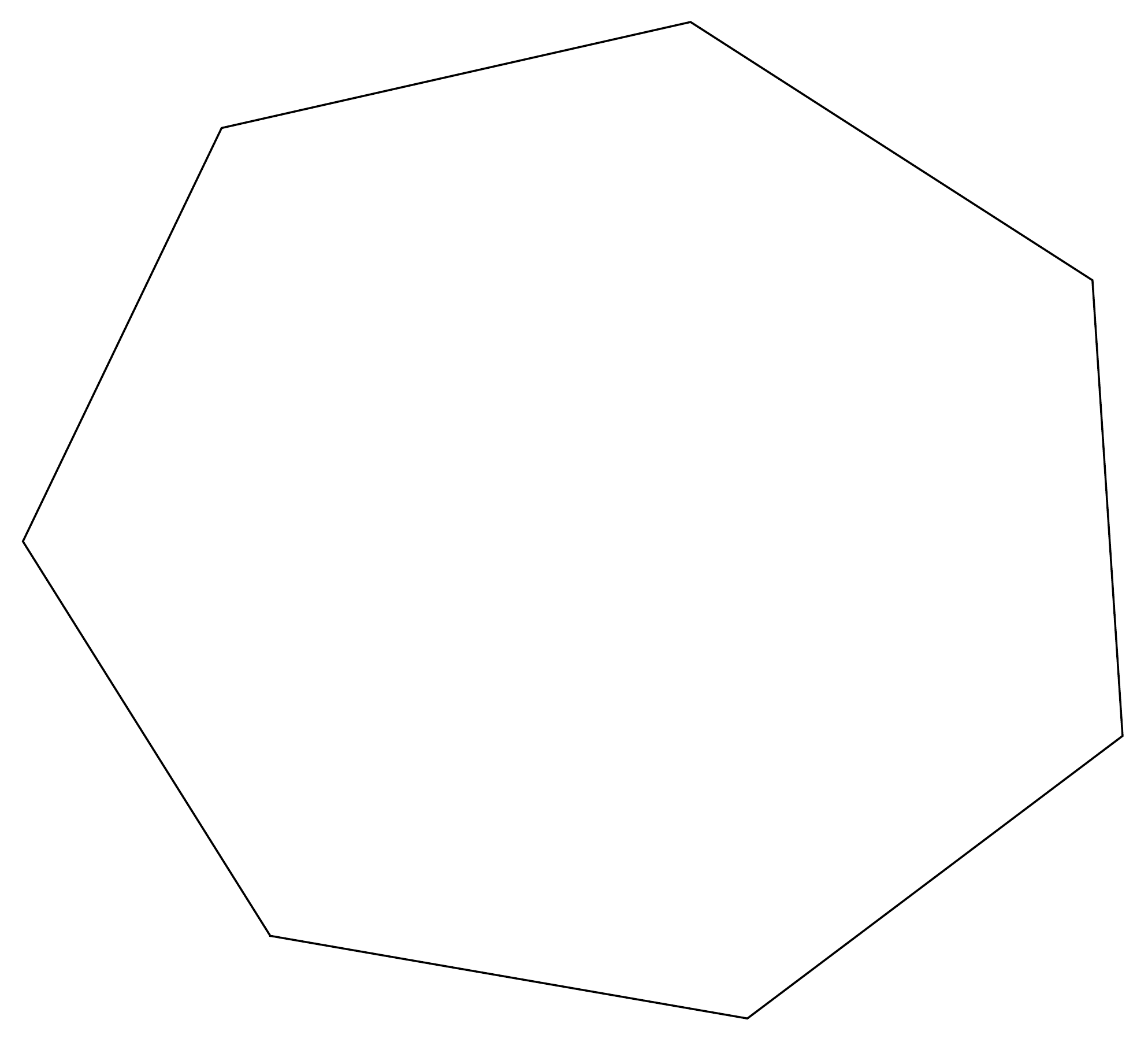}}
   \subfigure[$10^3X(10^3)$]{
    \label{heptaseq:d} 
    \includegraphics[width=0.8in]{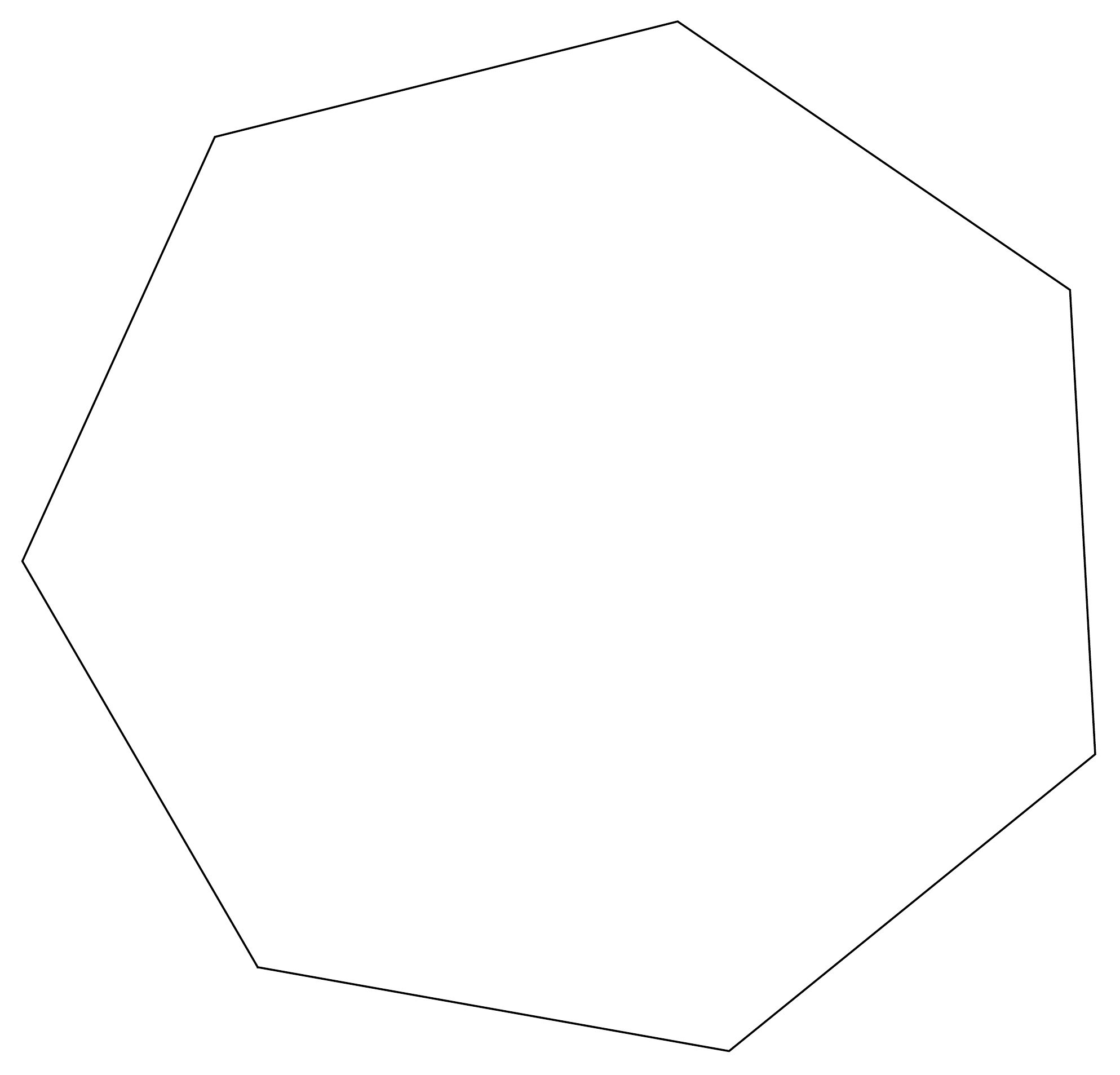}}
   \subfigure[$10^4X(10^4)$]{
    \label{heptaseq:e} 
    \includegraphics[width=0.8in]{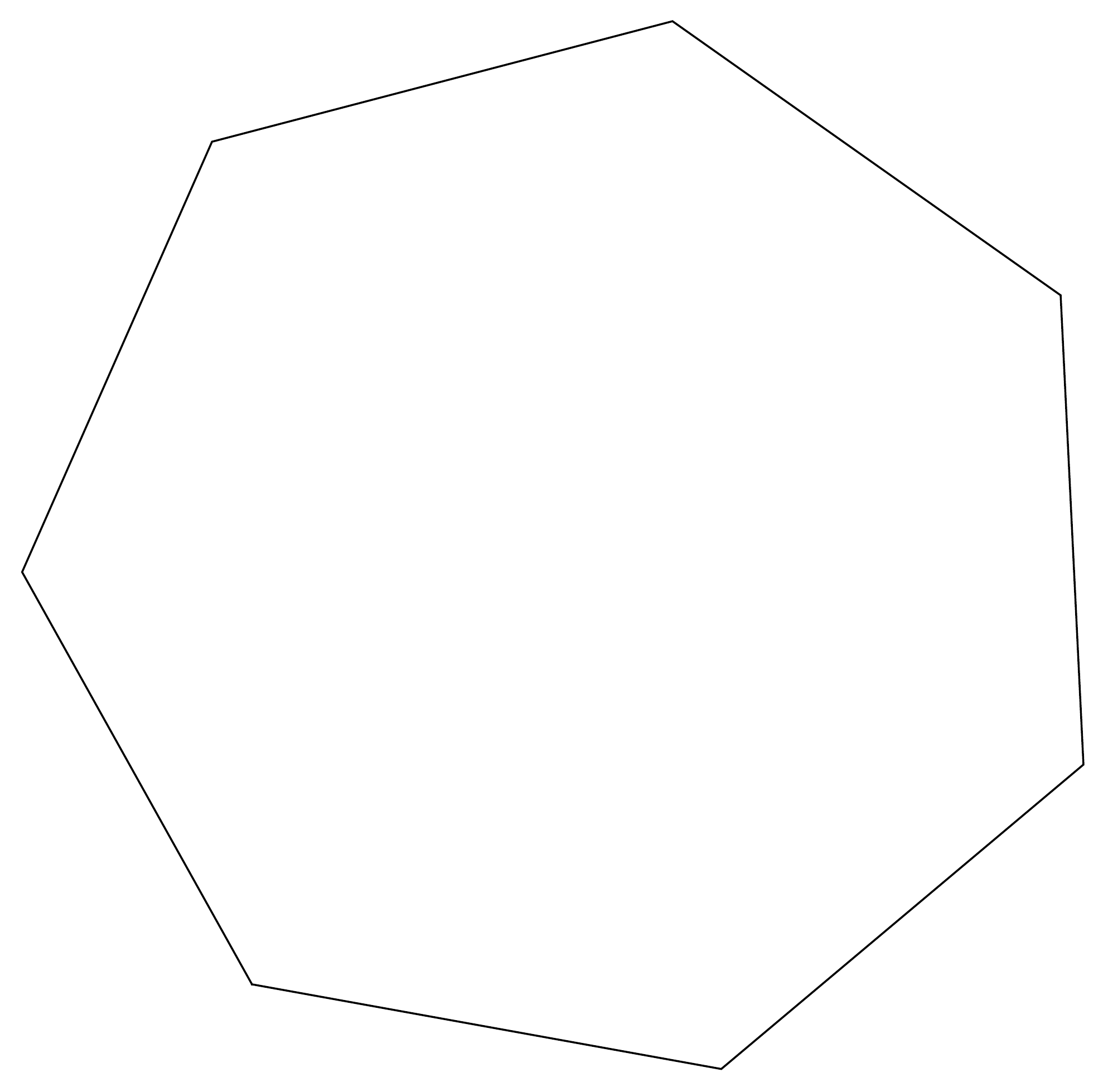}}
   \subfigure[$10^5X(10^5)$]{
    \label{heptaseq:f} 
    \includegraphics[width=0.8in]{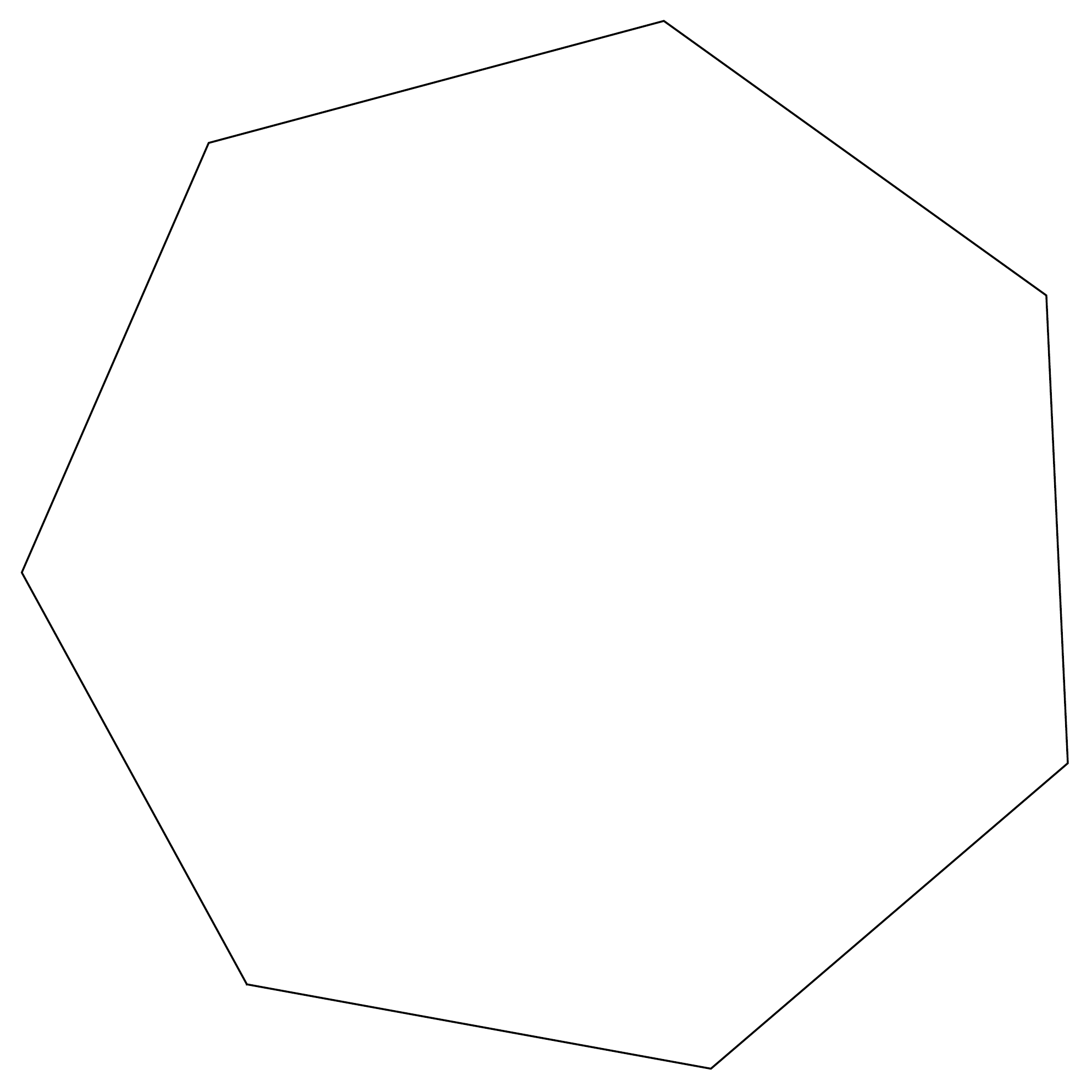}}

  \caption{The smallest heptagon in each graph of Figure \ref{hepta}.}
  \label{heptaseq} 
\end{figure}
\end{example}

The next example says, our result in Theorem \ref{stableshapethm4} is sharp since it is possible
to have a locally stable rhombus.
\begin{example}\label{example:rhombus}
  In Figure \ref{quadtest}, we look at the flow starting at a rectangle and also starting at another quadrilateral. It appears that the rectangle evolves to a square while the other quadrilateral evolves
  to a rhombus that is not a square.

\begin{figure}
  \centering
  \includegraphics[scale=0.5]{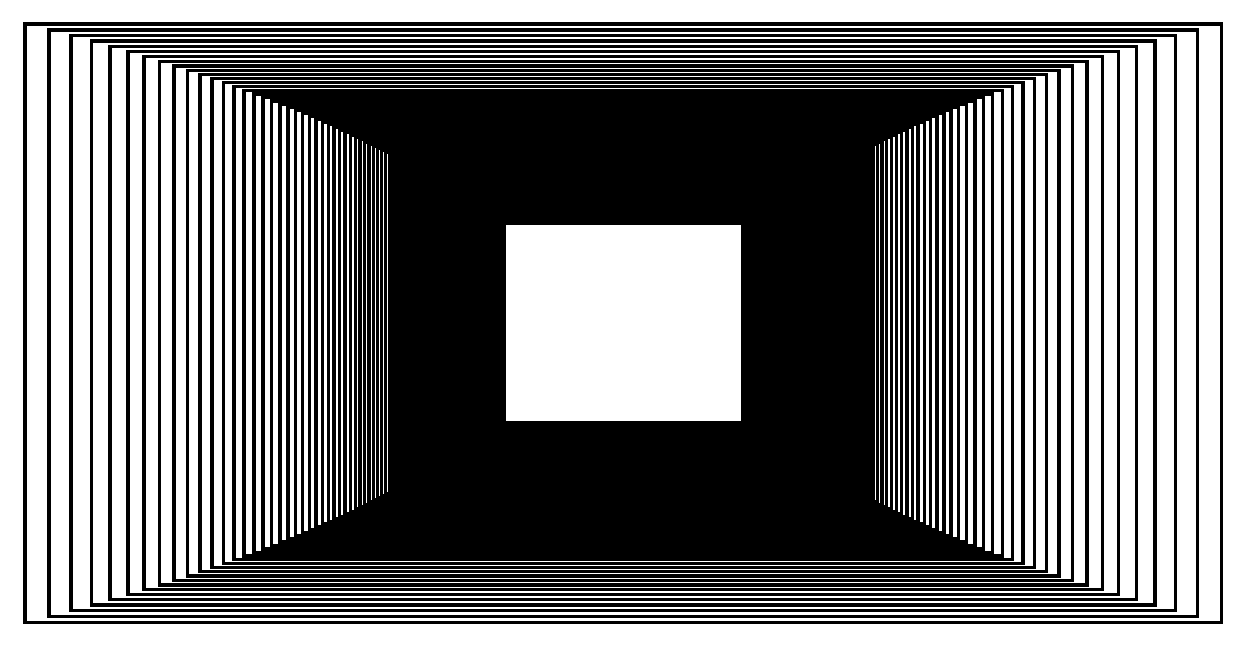}\hfill
  \includegraphics[scale=0.3]{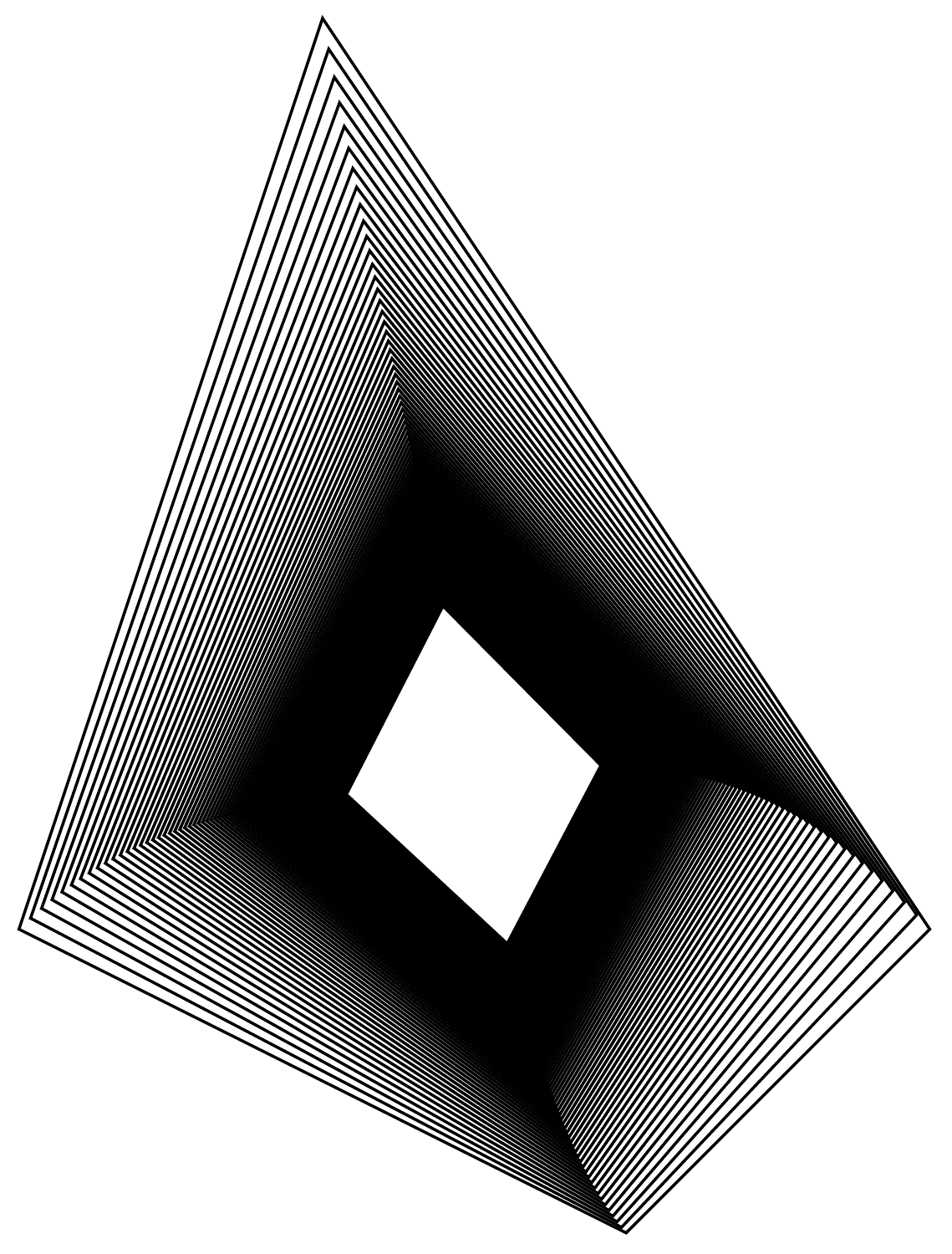}
  \caption{Evolution of the quadrilateral with $\beta=1$ under (\ref{flowmatrix}): (left) a rectangle shrinks to a square; (right) a quadrilateral shrinks to a rhombus.}\label{quadtest}
\end{figure}

\end{example}

\section{Evolution of the triangle}\label{sectria}

Inspired by the techniques used in \cite{Naka}, we obtain the following result
for the triangle, $N=3$.

\begin{theorem}\label{trilimit}
  Under the $\beta$-polygon flow, an arbitrary (nondegenerate) triangle shrinks to a point and converges to a regular triangle if appropriately rescaled.
\end{theorem}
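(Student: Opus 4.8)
The plan is to treat the two assertions separately. That a nondegenerate triangle shrinks to its center of mass is already contained in Theorem~\ref{exist}, so the real content is the limiting \emph{shape}. I parametrize the shape of a nondegenerate triangle by its triple of interior angles $\theta=(\theta_0,\theta_1,\theta_2)$, which lies in the open $2$-simplex $\Delta=\{\theta_j>0,\ \theta_0+\theta_1+\theta_2=\pi\}$, the equilateral triangle being the barycenter $(\tfrac{\pi}{3},\tfrac{\pi}{3},\tfrac{\pi}{3})$. Writing $a_k:=\sin\theta_k$ and using, for $N=3$, the identity $l_{j-2}=l_{j+1}$ together with the law of sines $l_{j+1}=2R\sin\theta_j$ (here $R=R(t)$ is the circumradius), every length appearing in the bracket of (\ref{angle}) becomes $2R$ times a function of the angles alone; since that bracket is homogeneous of degree $\beta+2$ in the lengths while the prefactor $1/(l_jl_{j-1})$ has degree $-2$, a short computation reduces (\ref{angle}) to
\[
\frac{d\theta_j}{dt}=(2R)^{\beta}\,H_j(\theta),\qquad
H_j(\theta)=\frac{a_j\big[(a_{j-1}^{\beta+2}+a_{j+1}^{\beta+2})-a_j^{\beta}(a_{j-1}^{2}+a_{j+1}^{2})\big]}{a_{j-1}a_{j+1}}.
\]
Reparametrizing time by $d\sigma=(2R(t))^{\beta}\,dt$ turns this into the \emph{autonomous} flow $\dot\theta=H(\theta)$ on $\Delta$ (the constraint $\sum\theta_j=\pi$ is automatically preserved, being valid for every genuine triangle). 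For this to govern the $t\to\infty$ behavior I need $\sigma\to\infty$; this holds because $|dX_j/dt|\le 2D^{\beta+1}$ with $D$ the diameter, so $D$ is Lipschitz with $D'\ge -cD^{\beta+1}$, whence $2R\ge D\gtrsim t^{-1/\beta}$ and $\int_0^{\infty}(2R)^{\beta}\,dt=\infty$.

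The heart of the proof is a Lyapunov function for $\dot\theta=H(\theta)$, for which I propose $V(\theta)=\theta_{\max}-\theta_{\min}$. The elementary geometric fact I would use is that on $\Delta$ the largest angle carries the largest sine and the smallest angle the smallest sine: when all three angles are acute this is just monotonicity of $\sin$ on $[0,\pi/2]$, and when one angle $\theta_j$ is obtuse — necessarily the unique largest one — its sine still dominates because the largest of the other two angles is at most $\pi-\theta_j$. Consequently, at the vertex realizing $\theta_{\min}$ one has $a_j\le a_{j-1}$ and $a_j\le a_{j+1}$, hence $a_{j-1}^{\beta+2}+a_{j+1}^{\beta+2}\ge a_j^{\beta}(a_{j-1}^{2}+a_{j+1}^{2})$, so the bracket in $H_j$ is nonnegative and $H_j\ge 0$; symmetrically $H_j\le 0$ at the vertex realizing $\theta_{\max}$; and equality in either case forces $a_{j-1}=a_j=a_{j+1}$. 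Finally, three equal sines together with $\sum\theta_j=\pi$ force the equilateral triangle (at most one angle is obtuse; the two acute ones are then equal, and the third is their common value, since being its supplement would require a vanishing angle). Therefore $V$ is nonincreasing along the flow and strictly decreasing off the equilateral point. Moreover $\theta_{\min}(t)\ge\theta_{\min}(0)>0$ and $\theta_{\max}(t)\le\theta_{\max}(0)<\pi$ for all $t$, so the trajectory stays in a compact subset of $\Delta$ and the triangle never degenerates.

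Convergence then follows by the standard LaSalle argument: the trajectory of the autonomous system $\dot\theta=H(\theta)$ is precompact, so its $\omega$-limit set is nonempty, compact and invariant; $V$ is constant on it; invariance together with the strict decrease of $V$ away from the equilateral point forces the $\omega$-limit set to be $\{(\tfrac{\pi}{3},\tfrac{\pi}{3},\tfrac{\pi}{3})\}$. Since $\sigma\to\infty$ exactly when $t\to\infty$, the interior angles converge to $\pi/3$; combined with Theorem~\ref{exist}, recentering $X(t)$ at its center of mass and rescaling (say, to unit perimeter) produces a family converging, up to a rotation, to a regular triangle.

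The main obstacle is bookkeeping rather than conceptual: making the passage to the autonomous system precise and verifying $\int_0^{\infty}(2R)^{\beta}\,dt=\infty$; the elementary but fiddly inequalities identifying the extremal sines and pinning down their equality case (a short argument on whether some angle is obtuse); and the fact that $\theta_{\max},\theta_{\min}$ are only Lipschitz in $t$, so "strictly decreasing" and the LaSalle step must be read with one-sided (Dini) derivatives. If one prefers to avoid the nonsmoothness altogether, the same scheme runs with the smooth Lyapunov function $\sum_j\cos\theta_j$, which on $\Delta$ is uniquely maximized at the equilateral triangle; one then verifies $\frac{d}{d\sigma}\sum_j\cos\theta_j\ge 0$ directly from $H$, at the cost of a longer computation.
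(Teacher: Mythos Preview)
Your argument is correct and takes a genuinely different route from the paper. The paper's Lyapunov function is the smooth one
\[
V(\theta)=-(\pi-\theta_0)(\pi-\theta_1)(\pi-\theta_2),
\]
and $dV/dt$ is computed directly in the original time variable; after simplification each of the three summands is a positive multiple of an expression of the form $(l_a^{\beta}-l_b^{\beta})(\theta_c-\theta_d)$ with $l_a,l_b$ the sides opposite $\theta_d,\theta_c$. The law of sines in the form ``larger angle $\Leftrightarrow$ longer opposite side'' then makes every summand nonpositive, with equality only at the equilateral point---no acute/obtuse case split is needed.

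Your choice $V=\theta_{\max}-\theta_{\min}$ is more transparent geometrically and, as a side benefit, immediately gives the a~priori bounds $\theta_{\min}(t)\ge\theta_{\min}(0)$ and $\theta_{\max}(t)\le\theta_{\max}(0)$, which pin the trajectory inside a compact subset of the simplex; the paper leaves this non-degeneracy implicit. You are also explicit about the time reparametrization $d\sigma=(2R)^{\beta}\,dt$ and the verification that $\sigma\to\infty$; the paper's $dV/dt$ in fact carries the same overall factor $(2R)^{\beta}$, so this point is needed there as well but is glossed over. The cost of your approach is the Dini-derivative bookkeeping for a nonsmooth $V$ and the small case analysis for the ordering of the sines, both of which the paper avoids by its choice of $V$ and its direct use of the side--angle correspondence. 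Either route closes the argument.
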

\begin{proof}
  For a triangle, it is sufficient to show that the angles all converge to $\pi/3$.
  The possible angles for a (nondegenerate) counterclockwise oriented triangle
  form the following region $\Omega$:
  \begin{equation}\label{triregion}
    \Omega=\{(\theta_0,\theta_1,\theta_2)|\theta_0+\theta_1+\theta_2=\pi,0<\theta_0,\theta_1,\theta_2<\pi\}.
  \end{equation} The area $S$ can be expressed as
  $$S=\frac{1}{2}l_0l_2\sin\theta_0=\frac{1}{2}l_0l_1\sin\theta_1=\frac{1}{2}l_1l_2\sin\theta_2.$$
  Using this relation and (\ref{angle}), we have
  \begin{displaymath}
    \begin{split}
      \frac{d{\theta}_0}{dt} & =\frac{1}{l_0l_2}\left[(l_2^{\beta+2}+l_0^{\beta+2})\sin\theta_0-l_1^{\beta+1}l_2\sin\theta_1-l_1^{\beta+1}l_0\sin\theta_2)\right] \\
        &=\frac{1}{2S}\left[l_1^2\sin^2\theta_1(l_2^\beta-l_1^\beta)+l_0^2\sin^2\theta_0(l_0^\beta-l_1^\beta)\right].
    \end{split}
  \end{displaymath}
  Similarly, we have $$\frac{d{\theta}_1}{dt}=\frac{1}{2S}\left[l_2^2\sin^2\theta_2(l_0^\beta-l_2^\beta)+l_1^2\sin^2\theta_1(l_1^\beta-l_2^\beta)\right]$$
  and $$\frac{d{\theta}_2}{dt}=\frac{1}{2S}\left[l_0^2\sin^2\theta_0(l_1^\beta-l_0^\beta)+l_2^2\sin^2\theta_2(l_2^\beta-l_0^\beta)\right].$$
  Let us introduce a function $$V(\theta_0, \theta_1, \theta_2)=-(\pi-\theta_0)(\pi-\theta_1)(\pi-\theta_2).$$
  The function $V$ is negative in $\Omega$, zero on $\partial\Omega$, and has a unique minimum at $P=(\pi/3,\pi/3,\pi/3)$. Its time derivative is given by
  \begin{displaymath}
    \begin{split}
      \frac{dV}{dt} & =\frac{d{\theta}_0}{dt}(\pi-\theta_1)(\pi-\theta_2)+\frac{d{\theta}_1}{dt}(\pi-\theta_0)(\pi-\theta_2)+\frac{d{\theta}_2}{dt}(\pi-\theta_0)(\pi-\theta_1) \\
        & =\frac{1}{2S}l_1^2\sin^2\theta_1(l_2^\beta-l_1^\beta)(\theta_0-\theta_1)(\pi-\theta_2)+\frac{1}{2S}l_0^2\sin^2\theta_0(l_0^\beta-l_1^\beta)(\theta_0-\theta_2)(\pi-\theta_1)\\
        &\qquad +\frac{1}{2S}l_2^2\sin^2\theta_2(l_0^\beta-l_2^\beta)(\theta_1-\theta_2)(\pi-\theta_0).
    \end{split}
  \end{displaymath}
  The right-hand side is negative on $\Omega-P$ and zero at $P$. Thus $V$ is a Lyapunov function and, therefore, $P$ is asymptotically stable.
\end{proof}

Since the evolution of triangles is understood by Theorem \ref{trilimit}, in the rest of this paper we assume that $N \geq 4.$

\section{Self-similar solutions and the rescaled flow}\label{secselfsim}
One way to study asymptotic behavior of geometric flows is to try to show
that limiting flows converge to self-similar solutions in some sense. Self-similar solutions are special solutions
that do not change shape as they evolve. In other words, the initial data
 determines the shape of the solution. This property yields one of the benefits of finding the self-similar
 solutions: the time variable can be separated out. In \cite{abr}, Abresch and Langer
 classify all of the self-similar solution of the curve shortening flow. In \cite{ang}, Angenent shows that a convex immersed plane
 curve that evolves by its curvature will either shrink to a point in an asymptotically self-similar
 manner (as described by Abresch and Langer \cite{abr}), or else there exists a rescaled flow coverging to the graph
 of the grim reaper (a noncompact self-similar solution). Similarly, in \cite{schn}, the authors proved that there is a rescaled lens-shaped network that
 contracts smoothly to a unique self-similar solution of the planar network flow. We will show
 that solutions to the $\beta$-polygon flow converge converge asymptotically to self-similar solutions.

 Recall the definition
of point from Proposition \ref{prop:fixed points}.
\begin{definition}\label{selfsimilar}
  We say $\mathbf{X}(t)$ is a self-similar solution of (\ref{flowmatrix}) if there exists a polygon $X_0$, a scaling function $\lambda(t)$, and a point $Q$ such that $\mathbf{X}(t)=\lambda(t)X_0+Q$ satisfies (\ref{flowmatrix})
\end{definition}

By Theorem \ref{exist} we must have that $\lambda(t) \rightarrow 0$ and $Q=\displaystyle\lim_{t\rightarrow\infty}\mathbf{X}(t)$ as
$t \rightarrow \infty$.

When $\beta=0$, the system (\ref{flowmatrix}) becomes a linear system as studied in \cite{dav}, and the corresponding $N\times N$ matrix $M$ is:
\begin{equation}\label{linearM}
M=\left(
                       \begin{array}{cccccc}
                         -2 & 1 & 0 & \cdots & 0 & 1 \\
                         1 & -2 & 1  & \ddots & 0 \\
                         0 & 1 & -2 & 1 & 0 & \vdots \\
                         \vdots & 0 & \ddots & \ddots & \ddots & 0 \\
                         0 & \ddots & 0 & 1 & -2 & 1 \\
                         1 & 0 & \cdots & 0 & 1 & -2 \\
                       \end{array}
                     \right).\end{equation}
Because $M$ is a circulant matrix, $\mathbb{C}^N$ has a basis of eigenvectors consisting of $N$-th roots of unity:
\begin{equation}\label{eigvecofM}
  P_k=(1,\omega^k,\omega^{2k},\cdots,\omega^{2(n-1)k})^T\qquad (k=0,\cdots,N-1),
\end{equation}
where $\omega^{N}=1$ and ``T" signifies the transpose. We can think of these these vectors
as listing the vertices of a regular, oriented (possibly star-like) polygon in the complex plane by drawing each
entry of the vector in the complex plane and connecting consecutive entries by arrows.
The eigenvalue corresponding to the eigenvector $P_k$ can then be computed to be
\begin{equation}\label{eigofM}
  \lambda_k=-4\sin^2(\pi k/N)\qquad (k=0,\cdots,N-1).
\end{equation}



For $\beta>0$, we see that the regular polygons are still self-similar solutions.

\begin{lemma}\label{selfp1lem}
  The regular $N$-gons $P_k$ as defined in (\ref{eigvecofM}) are self-similar solutions of (\ref{flowmatrix}), i.e., if
  $a(t)=(1-\beta l^\beta\lambda_k t)^{-1/\beta}$, where $l$ is the edge length of $P_k$
  and $\lambda_k=-4\sin^2(\pi k/N)$ is the corresponding eigenvalue of $M$, then $P(t)=a(t)P_k$
    is a solution of (\ref{flowmatrix}).
\end{lemma}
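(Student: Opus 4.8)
The plan is to verify directly that $P(t) = a(t) P_k$ satisfies the $\beta$-polygon flow by computing both sides of \eqref{flowmatrix} and matching them. First I would record the two structural facts we need about the regular polygon $P_k$: since all its edge lengths equal $l$, the matrix $M_{P_k}$ is exactly $l^\beta M$, where $M$ is the circulant matrix \eqref{linearM}; and $P_k$ is an eigenvector of $M$ with eigenvalue $\lambda_k = -4\sin^2(\pi k/N)$ as in \eqref{eigofM}. Consequently, for any scalar $a$, the rescaled polygon $aP_k$ has all edge lengths equal to $|a| l$, so $M_{aP_k} = |a|^\beta l^\beta M$, and hence $M_{aP_k}(aP_k) = |a|^\beta l^\beta \lambda_k (a P_k)$. (We will arrange $a(t) > 0$ so the absolute values are harmless.)

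Next I would compute the time derivative of the left-hand side. With $a(t) = (1 - \beta l^\beta \lambda_k t)^{-1/\beta}$ we get
\begin{equation*}
\frac{d}{dt} a(t) = -\frac{1}{\beta}\bigl(1 - \beta l^\beta \lambda_k t\bigr)^{-1/\beta - 1}\cdot(-\beta l^\beta \lambda_k) = l^\beta \lambda_k \bigl(1 - \beta l^\beta \lambda_k t\bigr)^{-1/\beta - 1} = l^\beta \lambda_k\, a(t)^{1+\beta},
\end{equation*}
using $(1 - \beta l^\beta \lambda_k t)^{-1} = a(t)^\beta$. Therefore $\frac{d}{dt} P(t) = \frac{da}{dt} P_k = l^\beta \lambda_k a(t)^{1+\beta} P_k$. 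On the other hand, by the previous paragraph applied with $a = a(t)$,
\begin{equation*}
M_{P(t)} P(t) = a(t)^\beta l^\beta \lambda_k\, \bigl(a(t) P_k\bigr) = l^\beta \lambda_k a(t)^{1+\beta} P_k,
\end{equation*}
which matches $\frac{d}{dt} P(t)$ exactly. Since $M_X X$ is just the matrix form \eqref{flowmatrix2} of the flow \eqref{flowmatrix}, this shows $P(t)$ solves the $\beta$-polygon flow. It is of the form $\lambda(t) X_0 + Q$ with $X_0 = P_k$, $\lambda(t) = a(t)$, and $Q = 0$, so it is a self-similar solution in the sense of Definition \ref{selfsimilar}.

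There is not really a deep obstacle here; the statement is essentially a computation. The one point requiring a little care is the interval of validity: $\lambda_k \le 0$, so $1 - \beta l^\beta \lambda_k t \ge 1 > 0$ for all $t \ge 0$, and $a(t)$ is smooth, positive, and decreasing on $[0,\infty)$ (it decreases to $0$ when $\lambda_k < 0$, i.e. $k \ne 0$), consistent with the remark after Definition \ref{selfsimilar} that $\lambda(t) \to 0$. I would also note in passing that the scaling law $M_{cX} = c^\beta M_X$ from Proposition \ref{prop:M properties}(2) is exactly what makes the nonlinear flow still admit these explicit self-similar solutions, just as in the $\beta = 0$ linear case the eigenvectors $P_k$ give exponentially shrinking solutions.
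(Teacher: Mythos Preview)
Your proof is correct and follows essentially the same approach as the paper: plug $P(t)=a(t)P_k$ into the flow, use the scaling property $M_{cX}=c^\beta M_X$ together with $M_{P_k}=l^\beta M$ and $MP_k=\lambda_k P_k$ to reduce the equation to the scalar ODE $da/dt=l^\beta\lambda_k a^{1+\beta}$, and then verify (or solve for) the given $a(t)$. The only difference is presentational---you verify that the stated $a(t)$ satisfies the ODE, whereas the paper derives $a(t)$ from the ODE with $a(0)=1$---and you add a welcome remark on the interval of validity.
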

\begin{proof}
  Using $P(t)=a(t)P_k$ in (\ref{flowmatrix}), we obtain
  \begin{displaymath}
    \frac{da}{dt} P_1=M_P P=a^{1+\beta}M_{P_k} P_k=a^{1+\beta}l^\beta M P_k=a^{1+\beta}l^\beta \lambda_k P_k.
  \end{displaymath}
  We then obtain $a(t)$ as the solution to the above differential equation with $a(0)=1$.
\end{proof}

We will study the asymptotic stability of the solutions of (\ref{flowmatrix}) around the regular polygon
 $P_1$. Due to the
invariance property, Lemma \ref{leminvariant}, it is sufficient to
study the local behavior near $P_1$.

Since the flow converges to a point by Theorem \ref{exist}, in order to study local behavior, we need to find
an appropriate rescaling. Let $\alpha(t):\mathbb{R}\rightarrow \mathbb{R}^+$ be some positive scaling function
and $X(t)$ be a solution of (\ref{flowmatrix}). Let $\bar{X}$ denote the vector of all ones multiplied by the average
$\frac 1 N \sum_{j=0}^{N-1} X_i$. Using Proposition \ref{prop:M properties}, we have
$$\frac{d}{dt}(\alpha (X-\bar{X}))=\frac{d\alpha}{dt}(X-\bar{X})+\alpha M_X X=\frac{d\alpha}{dt}\frac{1}{\alpha}(\alpha (X-\bar{X}))+\frac{1}{\alpha^\beta}M_{\alpha (X-\bar{X})} (\alpha (X-\bar{X})).$$

Letting $Y=\alpha (X-\bar{X})$,
  we obtain the following nonlinear system:
\begin{equation}
  \frac{d}{dt}Y=\frac{d\alpha}{dt}\frac{1}{\alpha}Y+\frac{1}{\alpha^\beta}M_{Y}Y.
\end{equation}
By letting $\alpha(t)=1/a(t)$, where $a$ is the function in Lemma \ref{selfp1lem}, we have
\begin{equation}\label{turnself}
  \frac{d}{dt}Y=-a^\beta l^\beta \lambda_1 Y+a^\beta M_Y Y,
\end{equation}
where $a$, $l$, and $\lambda_1$ are the same as in Lemma \ref{selfp1lem}. It is clear that the regular $N$-gon $P_1$
is an equilibrium point of this system and $c(t)P_1$ is a self-similar solution with $c(t)\rightarrow1$ as $t\rightarrow\infty.$

Consider a new time variable $\tau$ determined by $$\frac{dt}{d\tau}=\frac{1}{l^\beta a^\beta},$$  giving
 $$\tau=\frac{\ln(1-\beta l^\beta\lambda_1t)}{-\beta\lambda_1}.$$ Converting the system to
 a function of $\tau$ results in the following equation:
\begin{equation}\label{turnselfauto}
  \frac{dY}{d\tau}=-\lambda_1 Y+\frac{1}{l^\beta}M_Y Y.
\end{equation}
This leads to the following definition.
\begin{definition}
We call the flow in (\ref{turnselfauto}) the $\lambda_1$-\emph{rescaled $\beta$-polygon flow}.
\end{definition}



Motivated by Huisken's monotonicity formula \cite{hui}, we will prove a general monotonicity formula for polygons evolving under
the $\beta$-polygon flow. We then use the standard blow up argument in geometric flows (see \cite{schn} and \cite{hui} for instance) to show that evolutions satisfying the bound (\ref{anglelowerbd}) are asymptotically self-similar.

Let $X(t)=\lambda(t)X_0+Q$ (see Definition \ref{selfsimilar}) be a self-similar solution of
the $\beta$-polygon flow with $\lambda(0)=1.$ Since $\displaystyle\frac{d}{dt}\left(\frac{X-Q}{\lambda}\right)=0,$ we have
$$-\frac{d\lambda}{dt}\frac{1}{\lambda^2}(X-Q)+\frac{1}{\lambda}M_X X=0.$$
 By letting $t=0$, we obtain the following equation that determines the self-similar
 solution:
\begin{equation}\label{similarchar}
  M_{X_0}X_0=\frac{d\lambda}{dt}(0)(X_0-Q).
\end{equation}

Given $x_0\in\mathbb{C}^N$ and $X(t)$ a solution of the $\beta$-polygon flow, define $\rho_{x_0}(X,t)$ be the following entropy functional
\begin{equation}\label{enekernel}
  \rho_{x_0}(X,t)=\exp\left[-t^{2/\beta}\left|\strut X(t)-x_0\right|^2-\int_{0}^{t}\frac{\beta}{2}s^{2/\beta+1}\left|M_{X(s)} X(s)\right|^2ds\right],
\end{equation} where $|X|$ denotes the 2-norm of $X$ as in (\ref{lpnrom}).
We have the following monontonicity formula.

\begin{theorem}\label{monoformula}
  Let $X(t)$ be a the solution of 
  the $\beta$-polygon flow,
  then for any polygon $x_0$ we have the formula
  \begin{displaymath}
    \frac{d}{dt}\rho_{x_0}(X,t)=-\frac{2}{\beta}\rho_{x_0}(X,t)t^{2/\beta-1}\left\vert X-x_0+\frac{\beta}{2}tM_X X\right\vert^2.
  \end{displaymath}
  \end{theorem}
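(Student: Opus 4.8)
The plan is to verify the stated identity by a direct differentiation of the two terms in the exponent of $\rho_{x_0}(X,t)$, using only the chain rule and the flow equation $\frac{dX}{dt}=M_X X$ together with the bilinear expansion of a squared $2$-norm. Write $\rho_{x_0}(X,t)=\exp[A(t)]$ where $A(t)=-t^{2/\beta}|X(t)-x_0|^2-\int_0^t \frac{\beta}{2}s^{2/\beta+1}|M_{X(s)}X(s)|^2\,ds$, so that $\frac{d}{dt}\rho_{x_0}=\rho_{x_0}\cdot A'(t)$. The second term contributes, by the fundamental theorem of calculus, exactly $-\frac{\beta}{2}t^{2/\beta+1}|M_X X|^2$ to $A'(t)$. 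The first term, by the product and chain rules, contributes
\[
-\frac{2}{\beta}t^{2/\beta-1}|X-x_0|^2 - 2t^{2/\beta}(X-x_0)\cdot(M_X X),
\]
where I use $\frac{d}{dt}|X-x_0|^2 = 2(X-x_0)\cdot\frac{dX}{dt}=2(X-x_0)\cdot(M_X X)$.

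Next I would show that the sum of these three contributions factors as claimed. Expanding the target expression,
\[
-\frac{2}{\beta}t^{2/\beta-1}\Bigl|X-x_0+\tfrac{\beta}{2}tM_X X\Bigr|^2
= -\frac{2}{\beta}t^{2/\beta-1}|X-x_0|^2 - 2t^{2/\beta}(X-x_0)\cdot(M_X X) - \frac{\beta}{2}t^{2/\beta+1}|M_X X|^2,
\]
using $|u+v|^2=|u|^2+2u\cdot v+|v|^2$ with $u=X-x_0$, $v=\frac{\beta}{2}tM_X X$: the cross term gives $2\cdot\frac{2}{\beta}t^{2/\beta-1}\cdot\frac{\beta}{2}t = 2t^{2/\beta}$ times $(X-x_0)\cdot(M_X X)$, and the $|v|^2$ term gives $\frac{2}{\beta}t^{2/\beta-1}\cdot\frac{\beta^2}{4}t^2 = \frac{\beta}{2}t^{2/\beta+1}$ times $|M_X X|^2$. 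This is term-by-term identical to $A'(t)$ computed above, so $\frac{d}{dt}\rho_{x_0}=\rho_{x_0}\cdot A'(t)$ has exactly the asserted form.

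One small point worth addressing is the interpretation of $|\,\cdot\,|^2$ and the dot product: the vertices live in $\mathbb{C}^N$, but as noted in the excerpt $|X|$ denotes the real $2$-norm $\sum_k |X_k|^2$, so the relevant inner product is the real inner product on $\mathbb{R}^{2N}$, and $\frac{d}{dt}|X-x_0|^2 = 2\operatorname{Re}\langle X-x_0, M_X X\rangle$; since $M_X$ is a real symmetric matrix this pairing behaves exactly like a Euclidean dot product, and all the manipulations above go through verbatim with $\cdot$ read as this real inner product. There is essentially no obstacle here: the proof is a one-line chain-rule computation once the exponent is written out, and the only thing to be careful about is matching powers of $t$ when expanding the square. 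I would present it as: differentiate $\exp[A(t)]$, read off the three terms of $A'(t)$, and observe they assemble into the completed square.
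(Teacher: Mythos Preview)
Your proposal is correct and follows exactly the paper's approach: differentiate $\rho_{x_0}=\exp[A(t)]$ using the flow equation $\frac{dX}{dt}=M_X X$, obtain the three terms in $A'(t)$, and then recognize them as the expansion of the completed square $-\frac{2}{\beta}t^{2/\beta-1}|X-x_0+\frac{\beta}{2}tM_X X|^2$. The paper presents this in a single displayed equation without the intermediate commentary, but the computation is identical.
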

  \begin{proof}
    We calculate directly from (\ref{flowmatrix}) and find
    \begin{displaymath}
      \begin{split}
        \frac{d}{dt}\rho_{x_0}(X,t) & =\rho_{x_0}(X,t)\left[-\frac{2}{\beta}t^{2/\beta-1}\left| \strut X-x_0 \right|^2-2t^{2/\beta} (X-x_0)\cdot M_X X-\frac{\beta}{2}t^{2/\beta+1}\left|\strut M_X X \right|^2\right] \\
          & =-\frac{2}{\beta}\rho_{x_0}(X,t)t^{2/\beta-1}\left\vert X-x_0+\frac{\beta}{2}t M_X X\right\vert^2.
      \end{split}
    \end{displaymath} 
  \end{proof}

Consider a polygon $X(t)$ for $t\in[0,\infty)$ that contracts to a point $x_0$ as $t\rightarrow\infty.$ Consider a sequence of positive numbers $c_k\nearrow\infty$. We rescale the polygon under a sequence of dilations given by
\begin{equation}\label{rescale}
  Y^k(\tau)=c_k\left(X(c_k^\beta\tau)-x_0\right).
\end{equation}
Note that for each $k$, $Y^k(\tau)$ converges to the origin as $\tau \to \infty$. To ensure the compactness of the family of rescaled polygons $Y^k(\tau)$, we assume that the angles $\theta_i$ of $X(t)$ satisfy a lower bound, i.e., there exist $T_0$ and $\delta>0$ such that
\begin{equation}\label{anglelowerbd}
  \inf_{t\in[T_0,\infty)}\min_{i=0,\cdots,N-1}\sin^2\theta_i\geq \delta.
\end{equation}

\begin{lemma}\label{enerybd}
  If (\ref{anglelowerbd}) is valid, then the energy of the rescaled polygons $\{Y^k\}_{k=1}^\infty$
  is uniformly bounded on the compact interval $[\epsilon,1/\epsilon]$ for any $\epsilon>0$.
  Explicitly, there exists some uniform constants $\eta(\epsilon,\alpha,N)$ and
  $\tilde{\eta}(\epsilon,\alpha,N)$ such that
  \begin{displaymath}
    \eta(\epsilon,\alpha,N)\leq F_\alpha(Y^k(\tau))\leq\tilde{\eta}(\epsilon,\alpha,N),
    \qquad\forall k\geq 0,\tau\in[\epsilon,1/\epsilon].
  \end{displaymath}
\end{lemma}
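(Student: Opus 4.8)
Here is a plan of proof.

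The plan is to reduce the statement to a two-sided bound for $\Vert M_XX\Vert_2^2$ (the squared $2$-norm of the velocity of the polygon) along the original flow, and then to transfer it to $Y^k$ by scaling. From the definition (\ref{rescale}), together with translation invariance and homogeneity of $F_\alpha$, one has the identity
\[
F_\alpha\bigl(Y^k(\tau)\bigr)=c_k^{\alpha}\,F_\alpha\bigl(X(c_k^{\beta}\tau)\bigr),
\]
so everything comes down to understanding how fast $f(t):=F_\alpha(X(t))$ decays. Since the $\beta$-polygon flow is the negative gradient flow of $F_\alpha$, we have $f'(t)=-\Vert M_XX\Vert_2^2=-\sum_j|(M_XX)_j|^2$, and the lemma will follow once we prove
\[
c_2(\alpha,N)\,\delta\,f^{\,2-2/\alpha}\ \le\ \Vert M_XX\Vert_2^2\ \le\ c_1(\alpha)\,f^{\,2-2/\alpha},
\]
the upper inequality for every polygon and the lower one whenever $\sin^2\theta_i\ge\delta$ for all $i$.

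For the upper bound I would use the triangle inequality $|(M_XX)_j|\le l_j^{\alpha-1}+l_{j-1}^{\alpha-1}$, giving $\Vert M_XX\Vert_2^2\le 4\sum_j l_j^{2\alpha-2}$, and then monotonicity of $\ell^p$-norms of the edge-length vector (legitimate since $2\alpha-2\ge\alpha$ for $\beta>0$) to get $\sum_j l_j^{2\alpha-2}\le\bigl(\sum_j l_j^\alpha\bigr)^{(2\alpha-2)/\alpha}=(\alpha f)^{2-2/\alpha}$. The lower bound is the only place the hypothesis (\ref{anglelowerbd}) enters. Pick an index $j_0$ with $l_{j_0}=\max_j l_j=:L$ and set $a=l_{j_0}^{\beta+1}\ge b=l_{j_0-1}^{\beta+1}$. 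Since the two vectors at $X_{j_0}$ make angle $\theta_{j_0}$, a direct computation gives $|(M_XX)_{j_0}|^2=a^2+b^2+2ab\cos\theta_{j_0}\ge a^2+b^2-2ab\sqrt{1-\delta}$, and minimizing the right-hand side over $b\in[0,a]$ yields $|(M_XX)_{j_0}|^2\ge\delta a^2=\delta L^{2\alpha-2}$. Combining with $L^{2\alpha-2}\ge(\alpha/N)^{2-2/\alpha}f^{\,2-2/\alpha}$ (from $\sum_j l_j^\alpha\le NL^\alpha$) completes the estimate.

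With both inequalities in hand, for $t\ge T_0$ we have $-c_1 f^{2-2/\alpha}\le f'\le -c_2\delta\,f^{2-2/\alpha}$ (the left inequality for all $t\ge 0$; note $f(t)>0$ for every finite $t$, since $2-2/\alpha>1$ makes $f'\ge -c_1 f^{2-2/\alpha}$ rule out extinction in finite time). Substituting $g=f^{-\beta/\alpha}$ converts these into the linear bound $c_2\delta\,\tfrac{\beta}{\alpha}\le g'\le c_1\tfrac{\beta}{\alpha}$ on $[T_0,\infty)$; integrating from $T_0$ and inverting gives
\[
f(t)\le\Bigl(c_2\delta\,\tfrac{\beta}{\alpha}\Bigr)^{-\alpha/\beta}(t-T_0)^{-\alpha/\beta},\qquad f(t)\ge\bigl(g(T_0)+c_1\tfrac{\beta}{\alpha}\,t\bigr)^{-\alpha/\beta}.
\]
Inserting $t=c_k^{\beta}\tau$ and using the scaling identity, the factors $c_k^{\alpha}$ cancel exactly; for all $k$ large enough that $c_k^{\beta}\epsilon\ge 2T_0$ and $c_k^{-\beta}g(T_0)$ is small, the corrections involving $T_0$ and $g(T_0)$ are absorbed, leaving $\eta(\epsilon,\alpha,N)\le F_\alpha(Y^k(\tau))\le\tilde\eta(\epsilon,\alpha,N)$ (the constants also depending on the fixed $\delta$) uniformly for $\tau\in[\epsilon,1/\epsilon]$. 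The remaining finitely many small $k$ are handled separately: for each, $\tau\mapsto F_\alpha(Y^k(\tau))$ is continuous and strictly positive on the compact interval $[\epsilon,1/\epsilon]$ (no $Y^k(\tau)$ is a point for finite $\tau$), hence bounded away from $0$ and $\infty$; taking the minimum and maximum over this finite set and combining finishes the proof. The main obstacle is precisely the angle-dependent lower bound on $\Vert M_XX\Vert_2^2$: without an angle hypothesis, a single near-straight vertex makes the corresponding term $|(M_XX)_j|^2$ almost vanish, and $\sin^2\theta_j\ge\delta$ is exactly what quantitatively prevents that cancellation at the longest edge.
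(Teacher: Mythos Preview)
Your proof is correct and follows essentially the same route as the paper: bound $\Vert M_XX\Vert_2^2$ above and below by multiples of $f^{2-2/\alpha}$, integrate the resulting differential inequality for $f=F_\alpha(X)$, and rescale. The one minor difference is the lower bound on $\Vert M_XX\Vert_2^2$: the paper rewrites each summand via the identity $a^2+b^2+2ab\cos\theta_j=a^2\sin^2\theta_j+(a\cos\theta_j+b)^2$ (with $a=l_j^{\beta+1}$, $b=l_{j-1}^{\beta+1}$) and sums to get $\Vert M_XX\Vert_2^2\ge\delta\sum_j l_j^{2\beta+2}$ directly, whereas you extract the bound from the single term at the vertex adjacent to the longest edge---both yield the same differential inequality up to constants, and your explicit treatment of the finitely many small $k$ is a detail the paper leaves implicit.
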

\begin{proof}
 Let $X=(X_0,\cdots,X_{N-1})\in\mathbb{C}^N$. Recall that $\theta_j$ denotes the angle at $X_j$ and $l_j$ denotes the edge-length between $X_j$ and $X_{j+1}$. By (\ref{energey}) and (\ref{flowmatrix}), we have
  \begin{displaymath}
    \begin{split}
      \frac{d}{dt}F_\alpha(X) & = \frac{1}{\alpha}\frac{d}{dt}\sum_{j=0}^{N-1}l_j^\alpha \\
        & =-\sum_{j=0}^{N-1}\left\vert l_j^\beta(X_{j+1}-X_j)+l_{j-1}^\beta(X_{j-1}-X_j)\right\vert^2\\
        & =-\sum_{j=0}^{N-1}(l_j^{2\beta+2}+\l_{j-1}^{2\beta+2}+2l_j^{\beta+1}l_{j-1}^{\beta+1}\cos\theta_j)\\
        & =-\sum_{j=0}^{N-1}\left(l_j^{2\beta+2}\sin^2\theta_j+\left\vert
         l_j^{\beta+1}\cos\theta_j+l_{j-1}^{\beta+1}\right\vert^2\right).
    \end{split}
  \end{displaymath}
  By assumption (\ref{anglelowerbd}), there exist $T_0$ such that if $t>T_0$, we have
  $$-4\sum_{j=0}^{N-1}l_j^{2\beta+2}\leq\frac{d}{dt}F_\alpha\leq-\delta\sum_{j=0}^{N-1}l_j^{2\beta+2},$$
  where $\delta$ is the lower bound in (\ref{anglelowerbd}). In finite dimensional vector spaces,
  all norms are equivalent, so there exist positive numbers $\eta_1$ and $\eta_2$ which only depend on
  $\beta$ and $N$ such that
  \begin{displaymath}
    -\eta_1 F_\alpha^{\frac{2\beta+2}{\beta+2}}\leq \frac{d}{dt}F_\alpha\leq -\eta_2 F_\alpha^{\frac{2\beta+2}{\beta+2}}.
  \end{displaymath}
  These formulas integrate to estimates of the energy of $X$; explicitly, there exist positive numbers $\mu_1,\mu_2,\mu_3,\mu_4$ such that
  \begin{equation}\label{energybdeqn}
    \left[\frac{1}{\mu_1 t+\mu_2}\right]^{(\beta+2)/\beta}\leq F_\alpha(X)\leq \left[\frac{1}{\mu_3 t+\mu_4}\right]^{(\beta+2)/\beta}
  \end{equation}
  if $t>T_0$, where $\mu_i$ for $i=1,2,3,4$ only depend on $\alpha$ and $N$.

  Now consider the rescaled polygons $Y^k(\tau)=c_k[X(c_k^\beta\tau)-x_0]$. Since $F_\alpha(Y^k(\tau))=c_k^{\beta+2}F_\alpha(X(c_k^\beta\tau))$ and since for any sequence $c_k\nearrow\infty$,
  eventually $c_k\epsilon$ must be greater than $T_0$, it follows
  from (\ref{energybdeqn}) that for $k$ large enough,
   \begin{equation}\label{rescaleenergybdeqn}
    c_k^{\beta+2}\left[\frac{1}{\mu_1 c_k^\beta\tau+\mu_2}\right]^{(\beta+2)/\beta}\leq F_\alpha(Y^k(\tau))\leq c_k^{\beta+2} \left[\frac{1}{\mu_3 c_k^\beta\tau+\mu_4}\right]^{(\beta+2)/\beta}.
  \end{equation} This completes the proof.
\end{proof}

\begin{lemma}\label{rescalecompact}
  If (\ref{anglelowerbd}) is satisfied and if $\tau$ is restricted to a compact interval $[\epsilon,1/\epsilon]$, then the rescaled polygons $\{Y^k(\tau)\}_{k=1}^\infty$ are
  contained in a compact set .
\end{lemma}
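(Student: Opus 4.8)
The plan is to obtain the statement from the two–sided energy estimate of Lemma \ref{enerybd} together with the fact that the rescaled polygons carry a built‑in centre‑of‑mass normalization. First I would observe that, since the $\beta$-polygon flow preserves the centre of mass and $X(t)\to x_0$ by Theorem \ref{exist}, the point $x_0$ is in fact the centre of mass of $X(t)$ for every $t$. Consequently the centre of mass of $Y^k(\tau)=c_k\bigl(X(c_k^\beta\tau)-x_0\bigr)$ equals $c_k\bigl(x_0-x_0\bigr)=0$ for every $k$ and every $\tau$, which is precisely what will convert a bound on edge lengths into a bound on the individual vertices.

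Next, from the upper bound $F_\alpha(Y^k(\tau))\le\tilde\eta(\epsilon,\alpha,N)$ I would deduce $\sum_{j=0}^{N-1}l_j(Y^k(\tau))^\alpha\le\alpha\tilde\eta$, so that every edge of $Y^k(\tau)$ has length at most $(\alpha\tilde\eta)^{1/\alpha}$ and hence the perimeter, and therefore the diameter, of $Y^k(\tau)$ is at most $N(\alpha\tilde\eta)^{1/\alpha}$, uniformly in $k$ and in $\tau\in[\epsilon,1/\epsilon]$. Since the centroid is the origin and each vertex lies within one diameter of the centroid, this gives $|Y^k_j(\tau)|\le N(\alpha\tilde\eta)^{1/\alpha}$ for all $j$, whence $\Vert Y^k(\tau)\Vert_2\le N^{3/2}(\alpha\tilde\eta)^{1/\alpha}=:R$. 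Thus the whole family $\{Y^k(\tau):k\ge 1,\ \tau\in[\epsilon,1/\epsilon]\}$ is contained in the closed ball of radius $R$ in $\mathbb{C}^N$, which is compact by the Heine--Borel theorem; the finitely many small indices $k$ for which the quantitative form of Lemma \ref{enerybd} is not yet active cause no trouble, because each $Y^k$ is a continuous curve on the compact interval $[\epsilon,1/\epsilon]$ and so already has compact image.

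Finally I would record, from the lower bound $F_\alpha(Y^k(\tau))\ge\eta(\epsilon,\alpha,N)>0$, that $\max_j l_j(Y^k(\tau))\ge(\alpha\eta/N)^{1/\alpha}>0$, so that no $Y^k(\tau)$, and hence no accumulation point, collapses to a single vertex; this is what makes the polygon extracted in the blow‑up nontrivial. There is no deep obstacle here: the one step that genuinely requires care is the centre‑of‑mass normalization above, and the one point worth flagging is the correct reading of ``contained in a compact set''. Since a polygon is by Definition \ref{defpolygon} merely an ordered $N$-tuple of points, the ambient space is $\mathbb{C}^N$ and boundedness is all that is needed; but the energy and angle bounds do not by themselves prevent a single edge length from shrinking to zero (a long thin rectangle is an example), so an accumulation point may have coincident consecutive vertices. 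This is harmless for the sequel, since both the evolution $\frac{d}{d\tau}Y=M_YY$ and Definition \ref{selfsimilar} of a self‑similar solution involve only the edge lengths $l_j$, not the angles, and so remain meaningful for such degenerate $N$-tuples.
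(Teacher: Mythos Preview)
Your proof is correct, and it takes a genuinely different route from the paper's. The paper does not use the centre-of-mass normalisation at all: from the uniform energy bound of Lemma~\ref{enerybd} it only concludes that each $Y^k(\tau)$ lies in some ball $B(q_k,R)$ of uniform radius, and then argues by contradiction that the centres $q_k$ cannot drift to infinity, invoking the a priori monotonicity of $\Vert Y^{k}(t)-q\Vert_\alpha$ from Lemma~\ref{bdlem} to trap $Y^{k}$ away from the origin forever, contradicting $Y^{k}(t)\to 0$. Your argument replaces this indirect step by the simple observation (already recorded in the proof of Theorem~\ref{exist}) that the centre of mass is preserved, so that $x_0$ is the centroid of every $X(t)$ and hence every $Y^k(\tau)$ is centred at the origin; then the diameter bound coming from the energy immediately yields a vertex bound. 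This is more elementary---it uses only Lemma~\ref{enerybd} and dispenses with Lemma~\ref{bdlem} entirely---and it gives an explicit radius rather than an existence statement. The paper's route, on the other hand, would still work in settings where the centroid is not exactly fixed but the flow has some other trapping property encoded in an analogue of Lemma~\ref{bdlem}. Your closing remarks about the lower energy bound and about possible coincident consecutive vertices in an accumulation point are accurate and useful context for the sequel, though not strictly part of the present lemma.
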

\begin{proof}
  By Lemma \ref{enerybd}, there exists a uniform bound for the energy of the rescaled polygons. Therefore, for each $k$,  we can pick a sequence $q_k\in\mathbb{C}^2$ and a uniform radius $R>0$ such that $Y^k(\tau)\subseteq B(q_k,R)$, where $B(q_k,R):=\{z\in\mathbb{C}^2:\Vert z-q_k\Vert_\alpha\leq R\}$ denotes the $\alpha$-norm ball centered at $q_k$ with radius $R$ and  $\tau\in[\epsilon,1/\epsilon]$. If $\{q_k\}_{k=1}^\infty$ is contained in a compact set, then the statement follows. Otherwise, there exist a subsequence $q_k$, still denoted by $q_k$, that tends to infinity. For $k$ sufficiently large, say $k_0$, we have the origin is not contained in $B(q_{k_0},R)$. However, Lemma \ref{bdlem} says $Y^{k_0}(t)=c_{k_0}\left[X(c_{k_0}^\beta t)-x_0\right]$ stays in the ball $B(q_{k_0},R)$ for all $t>\tau,$ which contradicts the fact that $Y^{k_0}(t)$ converges to the origin as $t\rightarrow\infty.$
\end{proof}
In fact, we have a stronger statement about the flows.
\begin{lemma}\label{lem:compactfamily}
  If (\ref{anglelowerbd}) is satisfied then for any $\epsilon >0$, the set $\{Y^k(\tau)\}_{k=1}^\infty$ of rescaled
  polygons (considered as functions of $\tau$) is contained in a compact subset of $C^0([\epsilon,1/\epsilon],\mathbb{C}^N)$.
\end{lemma}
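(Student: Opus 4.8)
The plan is to upgrade the pointwise-in-$\tau$ compactness of Lemma \ref{rescalecompact} to compactness of the family $\{Y^k\}$ as a subset of $C^0([\epsilon,1/\epsilon],\mathbb{C}^N)$ by invoking the Arzelà–Ascoli theorem. Since we already know from Lemma \ref{rescalecompact} that for each fixed $\tau\in[\epsilon,1/\epsilon]$ the set $\{Y^k(\tau)\}$ lies in a fixed compact set of $\mathbb{C}^N$, the one remaining ingredient is equicontinuity of the maps $\tau\mapsto Y^k(\tau)$ on $[\epsilon,1/\epsilon]$. For this I would estimate the $\tau$-derivative of $Y^k$ uniformly in $k$.

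First I would write down the evolution equation satisfied by $Y^k$. Since $Y^k(\tau)=c_k(X(c_k^\beta\tau)-x_0)$ and $X$ solves (\ref{flowmatrix2}), the chain rule together with Proposition \ref{prop:M properties}(2) gives $\frac{d}{d\tau}Y^k(\tau)=c_k^{\beta+1}M_{X(c_k^\beta\tau)}X(c_k^\beta\tau)=M_{Y^k(\tau)}(Y^k(\tau))-M_{Y^k(\tau)}\mathbf{x_0}'$, but more cleanly one observes $M_{Y^k}Y^k = M_{Y^k}(Y^k - \mathbf{c})$ for any constant vector $\mathbf{c}$ by Proposition \ref{prop:M properties}(3), so $\frac{d}{d\tau}Y^k(\tau)=M_{Y^k(\tau)}Y^k(\tau)$, i.e. each $Y^k$ again solves the $\beta$-polygon flow. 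Now I bound $\left|\frac{d}{d\tau}Y^k(\tau)\right|$: the entries of $M_{Y^k}$ are the quantities $l_j^\beta$ (edge lengths of $Y^k$), and $\left|M_{Y^k}Y^k\right|\le C(N)\max_j l_j^{\beta+1}$, which in turn is controlled by $F_\alpha(Y^k(\tau))$ via norm-equivalence on $\mathbb{C}^N$. By Lemma \ref{enerybd}, $F_\alpha(Y^k(\tau))\le\tilde\eta(\epsilon,\alpha,N)$ uniformly for all large $k$ and all $\tau\in[\epsilon,1/\epsilon]$, so $\sup_k\sup_{\tau\in[\epsilon,1/\epsilon]}\left|\frac{d}{d\tau}Y^k(\tau)\right|\le L(\epsilon,\alpha,N)<\infty$. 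This uniform Lipschitz bound is exactly equicontinuity (indeed uniform Lipschitz continuity) of the family.

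With the pointwise precompactness from Lemma \ref{rescalecompact} and the uniform Lipschitz bound just established, Arzelà–Ascoli yields that $\{Y^k\}$ is precompact — hence contained in a compact subset — of $C^0([\epsilon,1/\epsilon],\mathbb{C}^N)$, which is the claim. I expect the only mildly delicate point to be bookkeeping the "for $k$ large enough" qualifier from Lemma \ref{enerybd}: the energy bound there holds once $c_k\epsilon>T_0$, so the Lipschitz bound is uniform only for $k\ge k_0(\epsilon)$; but finitely many additional functions $Y^1,\ldots,Y^{k_0-1}$ are each individually continuous on the compact interval $[\epsilon,1/\epsilon]$ and hence lie in some compact set, so enlarging the compact set to absorb them does not affect the conclusion. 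No genuine obstacle arises; the work is entirely in assembling the derivative bound from the already-established energy estimate.
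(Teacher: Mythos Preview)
Your proposal is correct and follows essentially the same route as the paper: pointwise boundedness from Lemma \ref{rescalecompact}, a uniform bound on $dY^k/d\tau$ coming from the uniform edge-length bound (the paper extracts this directly from Lemma \ref{rescalecompact} rather than via $F_\alpha$, but the content is identical), and then Arzel\`a--Ascoli. Your extra bookkeeping for the finitely many small-$k$ terms and the explicit verification that each $Y^k$ solves the $\beta$-polygon flow are details the paper leaves implicit.
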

\begin{proof}
By Lemma \ref{rescalecompact}, the polygons $Y^k(\tau)$ are pointwise bounded for each $\tau$. Since this implies a uniform
bound on the edge lengths and since $\beta>0$, we see that there is a bound on the derivative $dY^k/d\tau$, uniformly
in $\tau$ and $k$. The lemma follows from the Arzela-Ascoli Theorem.
\end{proof}
We are now able to prove Theorem \ref{contractsimilar}, that nonsingular solutions converge to shrinking self-similar
solutions.

\begin{proof}[Proof of Theorem \ref{contractsimilar}]

Theorem \ref{monoformula} says $\rho_{x_0}(X,t)$ is monotonically decreasing in time and bounded below. Therefore, the limit
\begin{displaymath}
  \rho_{x_0}(X,\infty)=\lim_{t\rightarrow\infty}\rho_{x_0}(X,t)
\end{displaymath} exists and is finite. Moreover, it satisfies
\begin{displaymath}
  \rho_{x_0}(X,\infty)-\rho_{x_0}(X,t)=-\frac{2}{\beta}\int_{t}^{\infty}\rho_{x_0}(X,s)s^{2/\beta-1}
  \left|X(s)-x_0+\frac{\beta}{2}s M_{X(s)} X(s) \right|^2 ds.
\end{displaymath}
Changing variables according to the rescaling described by (\ref{rescale}), we obtain
\begin{equation}\label{rescalemono}
  \rho_{x_0}(X,\infty)-\rho_{x_0}(X,t)=-\frac{2}{\beta}\int_{t/c_k^\beta}^{\infty}
  \rho_{0}(Y^k,\tau)\tau^{2/\beta-1}\left|Y^k(\tau)+\frac{\beta}{2}\tau M_{Y^k(\tau)} Y^k(\tau)\right|^2d\tau.
\end{equation}



Fix $\epsilon >0$. By Lemma (\ref{lem:compactfamily}), we can extract a subsequence, which we still denote by $Y^k$ that converges to a limit polygon  $Y$ and satisfies the estimate in (\ref{rescaleenergybdeqn}).

Applying (\ref{rescalemono}) with $t=t_k$ chosen so that $t=\epsilon c_k^\beta$, we find that
\begin{displaymath}
  \frac{2}{\beta}\int_{\epsilon}^{\infty}\rho_{0}(Y^k,\tau)\tau^{2/\beta-1}\left|Y^k(\tau)+\frac{\beta}{2}\tau M_{Y^k(\tau)} Y^k(\tau)\right|^2d\tau\rightarrow0,
\end{displaymath} as $k\rightarrow\infty.$
Thus $Y$ satisfies
\begin{displaymath}
  Y(\tau)+\frac{\beta}{2}\tau M_{Y(\tau)} Y(\tau)=0,
\end{displaymath}
 when $\tau>\epsilon$ and $Y$ is a self-similar solution. Since we can do this for each $\epsilon> 0$, the result follows.

\end{proof}

\section{Small perturbations around the regular polygon}\label{seclocal}

In the previous section we saw how to turn a self-similar solution (regular $N$-gon) into an equilibrium point of the $\lambda_1$-rescaled $\beta$-polygon flow (\ref{turnselfauto}). In this section, by linearizing the rescaled system and using the center manifold theorem, we shall prove Theorems \ref{stableshapethm5} and \ref{stableshapethm4}.

%
%
%

\subsection{Linearization}
In order to properly describe the linearization, we will consider the flow with real coordinates.
Given $Y\in\mathbb{C}^N$, we identify $\mathbb{C}$ with $\mathbb{R}^2$ and write $Y=(Y^r,Y^i)^T,$ where $Y^r=(y_0^r,...,y^r_{N-1})$ and $Y^i=(y_0^i,\ldots,y^i_{N-1})$ give the vectors of real and imaginary parts
respectively. We can now write the $\lambda_1$-rescaled $\beta$-polygon flow as a $2N$-dimensional system:
\begin{equation}\label{realselfauto}
  \frac{dY}{d\tau}=-\lambda_1 Y+\frac{1}{l^\beta}\begin{pmatrix}
                       M_Y & 0 \\
                      0 & M_Y \\
                     \end{pmatrix} Y=:-\lambda_1 Y+\frac{1}{l^\beta}F(Y),
\end{equation} where $M_Y$ is defined in (\ref{flowmatrix}) and the equation defines $F$.

Let $(f_0,\ldots,f_{2N-1})$ denote the components of $F$. We have
\begin{equation}\label{linearfk}
\begin{split}
  f_k & =l_k^\beta(y_{k+1}^r-y_k^r)+l_{k-1}^\beta(y_{k-1}^r-y_k^r), \\
  f_{N+k}  & =l_k^\beta(y_{k+1}^i-y_k^i)+l_{k-1}^\beta(y_{k-1}^i-y_k^i),
\end{split}
\end{equation} where $l_k$ is the distance between $(y_k^r,y_k^i)$ and $(y_{k+1}^r,y_{k+1}^i)$ for $k=0,\ldots,N-1$.
We can now describe the linearization.

\begin{theorem}\label{linearrealthm}
 The linearization of (\ref{realselfauto}) around the regular $N$-gon $P_1$ (\ref{eigvecofM}) is
\begin{equation}\label{linearreal}
  \frac{dY}{d\tau}=-\lambda_1Y+\begin{pmatrix}
                                    M & \mathbf{0} \\
                                    \mathbf{0} & M \\
                                  \end{pmatrix}Y+\beta\begin{pmatrix}
                                    A & C \\
                                    C & B \\
                                  \end{pmatrix} Y.
\end{equation} Here $M$ is defined in (\ref{linearM}), $\mathbf{0}$ is the $N\times N$ 0-matrix, and the nonzero entries of $A=(a_{ij})$, $B=(b_{ij})$, and $C=(c_{ij})$ are:
\begin{displaymath}
  \begin{split}
     & a_{kk\pm 1}=\sin^2(2k \pm 1)\theta,\quad  a_{kk}=-[\sin^2(2k-1)\theta+\sin^2(2k+1)\theta],
     \\
      & b_{kk \pm 1}=\cos^2(2k \pm 1)\theta,\quad  b_{kk}=-[\cos^2(2k-1)\theta+\cos^2(2k+1)\theta],
\\
      & c_{kk \pm 1}=-\cos(2k \pm 1)\theta\sin(2k \pm 1)\theta, \quad
	\\
      &
      c_{kk}=\cos(2k-1)\theta\sin(2k-1)\theta+\cos(2k+1)\theta\sin(2k+1)\theta,
  \end{split}
\end{displaymath}
for $k=0,\cdots,N-1$, where $\theta = \pi / N$. \end{theorem}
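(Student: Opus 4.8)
The plan is to linearize the system (\ref{realselfauto}) at the equilibrium $P_1$ by computing the Jacobian of the map $Y\mapsto -\lambda_1 Y+\frac{1}{l^\beta}F(Y)$. The term $-\lambda_1 Y$ is already linear, so the entire content lies in differentiating $F$, and by the block structure of $F$ it suffices to differentiate the two scalar families $f_k$ and $f_{N+k}$ in (\ref{linearfk}) with respect to all $2N$ real coordinates, then evaluate at $P_1$. The key observation that makes this tractable is that each $f_k$ depends on the coordinates only through the two neighboring edge vectors, so the only nonzero partials are with respect to $y_{k-1}^r,y_k^r,y_{k+1}^r,y_{k-1}^i,y_k^i,y_{k+1}^i$; this immediately gives the tridiagonal (cyclic) structure claimed for $A$, $B$, $C$ and the block $\begin{pmatrix} M & \mathbf 0\\ \mathbf 0 & M\end{pmatrix}$.

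First I would write $f_k = l_k^\beta(y_{k+1}^r-y_k^r)+l_{k-1}^\beta(y_{k-1}^r-y_k^r)$ and use the product rule: the derivative of the factor $(y_{k+1}^r-y_k^r)$ contributes, at $P_1$ where all $l_j=l$, exactly the entries $+l^\beta,-l^\beta$ of the graph Laplacian $M$ scaled by $l^\beta$, accounting for the $\begin{pmatrix} M & \mathbf 0\\ \mathbf 0 & M\end{pmatrix}Y$ term after dividing by $l^\beta$. The derivative of $l_k^\beta$ contributes the perturbation: since $\partial l_k/\partial y_{k+1}^r = (y_{k+1}^r-y_k^r)/l_k$, one gets a factor $\beta l_k^{\beta-1}\cdot \frac{(y_{k+1}^r-y_k^r)}{l_k}\cdot(y_{k+1}^r-y_k^r) = \beta l_k^{\beta-2}(y_{k+1}^r-y_k^r)^2$, and evaluating at $P_1$ replaces $(y_{k+1}^r-y_k^r)/l$ by the cosine of the direction angle of the $k$-th edge and replaces $l_k^{\beta}$ by $l^\beta$, which cancels the $1/l^\beta$ prefactor and leaves the pure $\beta$ in front. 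So the whole problem reduces to identifying, at the regular $N$-gon $P_1=(1,\omega,\omega^2,\dots)$, the direction angles of the edges.

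Next I would do that bookkeeping. Writing $P_1$ in the complex plane, the $k$-th edge vector $X_{k+1}-X_k = \omega^{k+1}-\omega^k = \omega^k(\omega-1)$ has argument $\frac{2\pi k}{N}$ plus a fixed offset; up to an overall rotation of the polygon (which does not change the linearization in the relevant coordinates), the direction angle of the $k$-th edge is $(2k+1)\theta$ with $\theta=\pi/N$, so that $\frac{y_{k+1}^r-y_k^r}{l}=\cos(2k+1)\theta$ and $\frac{y_{k+1}^i-y_k^i}{l}=\sin(2k+1)\theta$, while the $(k-1)$-st edge at vertex $k$ has direction angle $(2k-1)\theta$. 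Substituting these into the product-rule terms: $\partial f_k/\partial y$ in the $r$-variables picks up $\beta\cos^2(2k\pm1)\theta$ off-diagonal... wait, one must track carefully which square appears. The term $\beta l_k^{\beta-2}(y_{k+1}^r-y_k^r)(\,\cdot\,)$ differentiated against an $r$-variable yields a $\cos^2$, against an $i$-variable yields a $\cos\sin$; similarly $f_{N+k}$ (the imaginary component) yields $\sin^2$ against $i$-variables and $\sin\cos$ against $r$-variables. Matching the off-diagonal entries: differentiating $f_k$ with respect to $y_{k+1}^r$ gives the $\sin^2(2k+1)\theta$ entry of $A$ (the $(y^{r})$--$(y^{r})$ block), because the relevant combination after using the area/edge relations turns out to be $1-\cos^2=\sin^2$ for the off-diagonal and its negative for the diagonal; similarly $B$ collects the complementary $\cos^2$ terms and $C$ the mixed $-\cos\sin$ terms, with the diagonal entries being minus the sum of the two neighboring off-diagonal entries (reflecting that $F$ kills constant vectors, i.e. the row sums vanish, which gives an independent consistency check). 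Finally I would verify the row-sum-zero property of $A,B,C$ as stated, confirming the diagonal formulas, and confirm the indices are cyclic mod $N$.

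The main obstacle is purely organizational rather than deep: keeping the sign conventions and the assignment of $\sin^2$ versus $\cos^2$ to $A$ versus $B$ consistent, since differentiating the edge length $l_k=\sqrt{(y_{k+1}^r-y_k^r)^2+(y_{k+1}^i-y_k^i)^2}$ produces cross terms in both blocks and it is easy to swap the roles of the real and imaginary directions or to misplace a sign coming from $(y_{k-1}-y_k)$ versus $(y_{k+1}-y_k)$. I would manage this by computing $\partial f_k/\partial y_{k+1}^r$, $\partial f_k/\partial y_{k-1}^r$, $\partial f_k/\partial y_{k+1}^i$, and $\partial f_k/\partial y_{k}^r$, $\partial f_k/\partial y_{k}^i$ in full generality first, then specializing to $P_1$ only at the very end; the diagonal entries then follow automatically from $\partial f_k/\partial y_k = -(\partial f_k/\partial y_{k+1}+\partial f_k/\partial y_{k-1})$, which both simplifies the computation and serves as an error check against the claimed diagonal formulas for $A$, $B$, and $C$.
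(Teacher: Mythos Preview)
Your approach is the same as the paper's: compute the Jacobian of $F$ componentwise via the product rule on $f_k=l_k^\beta(y_{k+1}^r-y_k^r)+l_{k-1}^\beta(y_{k-1}^r-y_k^r)$ and $f_{N+k}$, then evaluate at $P_1$, splitting the result into the $l^\beta M$ block (from differentiating the linear factor) and the $\beta l^\beta A,B,C$ blocks (from differentiating $l_k^\beta$), with the $l^\beta$ cancelling the prefactor.

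There is, however, a genuine slip in your bookkeeping that you half-noticed but did not correctly repair. At $P_1$ with $x_k^r=\cos 2k\theta$, $x_k^i=\sin 2k\theta$, one has
\[
\frac{y_{k+1}^r-y_k^r}{l}=\frac{\cos 2(k+1)\theta-\cos 2k\theta}{2\sin\theta}=-\sin(2k+1)\theta,
\qquad
\frac{y_{k+1}^i-y_k^i}{l}=\cos(2k+1)\theta,
\]
so the edge direction is $(2k+1)\theta+\tfrac{\pi}{2}$, not $(2k+1)\theta$. That $\pi/2$ offset is \emph{not} an ``overall rotation of the polygon'' you may discard: it swaps which of $\sin,\cos$ lands in the real versus imaginary slot, and hence which of $A,B$ receives $\sin^2$ versus $\cos^2$. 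With the correct assignment above, $\partial f_k/\partial y_{k+1}^r = l^\beta\bigl(1+\beta\sin^2(2k+1)\theta\bigr)$ directly, giving $a_{k,k+1}=\sin^2(2k+1)\theta$ as stated; there is no ``$1-\cos^2$'' mechanism, the $1$ is simply the $M$-entry. Your attempted recovery via ``area/edge relations'' is spurious. Once you fix this one sign/offset, the rest of your outline (tridiagonal structure, row-sum-zero check for the diagonals, symmetry between $f_k$ and $f_{N+k}$) goes through exactly as the paper does.
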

\begin{proof} Let $P_1=(x_0^r,\cdots,x_{N-1}^r,x_0^i,\cdots,x_{N-1}^i)$ be the regular $N$-gon defined in (\ref{eigvecofM}). We have $x_k^r  =\cos2k\theta$, $x_k^i   =\sin2k\theta,$ and $l_k=l=2\sin\theta$ for $k=0,\cdots,N-1.$

We can now differentiate (\ref{linearfk}) and evaluate at $P_1$. For instance, we have
\begin{displaymath}
\begin{split}
   \frac{\partial f_k}{\partial y_{k-1}^r}& =l^\beta[1+\beta \sin^2(2k-1)\theta], \\
    \frac{\partial f_k}{\partial y_{k}^r}&=-l^\beta\left[2+\beta\big(\sin^2(2k-1)\theta+\sin^2(2k+1)\theta\big)\right],\\
    \frac{\partial f_k}{\partial y_{k-1}^i}&=-\beta l^\beta\cos(2k-1)\theta\sin(2k-1)\theta,
\end{split}
\end{displaymath}
for $k=0,\ldots,N-1.$
\end{proof}


\subsection{Stability}

To study the local stability of (\ref{linearreal}) at $P_1$, it is enough to classify the eigenvalues
and eigenspaces of the matrix
$$\left[-\lambda_1 I+\begin{pmatrix}
                                    M & 0 \\
                                    0 & M \\
                                  \end{pmatrix}\right]+\beta\begin{pmatrix}
                                    A & C \\
                                    C & B \\
                                  \end{pmatrix}=:D+\beta E,$$
where $I$ is the $2N\times 2N$ identity matrix.

First recall the analysis of the matrix $M$. From (\ref{eigvecofM}) and (\ref{eigofM}), the real and imaginary parts of $P_k$ form a set of real vectors $\{c_k,s_k\}$ that spans the eigenspace of $\lambda_k$, where
\begin{equation}\label{realeig}
  \begin{split}
    c_k & =(1,\cos 2k\theta,\cdots,\cos 2k(N-1)\theta)^T, \\
    s_k  & =(0,\sin 2 k\theta,\cdots,\sin 2k(N-1)\theta)^T,
  \end{split}
\end{equation} for $0\leq k\leq \lfloor N/2\rfloor$. Note that if $k=0$ or $k=N/2$, the eigenspace for $\lambda_k$ is spanned by $\{c_k\}$ and is one-dimensional. For all other $k$, $\{c_k,s_k\}$ is a basis and the eigenspace is two-dimensional.  Let $\mathbf{0}=(0,\cdots,0)^T$ and $\mathbf{1}=(1,\cdots,1)^T$ denote vectors in $\mathbb{R}^N$. A straightforward calculation gives the following.
\begin{lemma}\label{eigenspaceofD}
  The $2N\times2N$ matrix $D=-\lambda_1 I+\begin{pmatrix}
                                    M & \mathbf{0} \\
                                    \mathbf{0} & M \\
                                  \end{pmatrix}$
                                  has the following eigenvectors：
  \begin{equation}\label{eigofD}
    \begin{pmatrix}
      c_k \\
      \mathbf{0} \\
    \end{pmatrix},\quad \begin{pmatrix}
      s_k \\
      \mathbf{0} \\
    \end{pmatrix},\quad\begin{pmatrix}
      \mathbf{0} \\
       c_k\\
    \end{pmatrix},\quad \begin{pmatrix}
      \mathbf{0} \\
       s_k\\
    \end{pmatrix},
  \end{equation} which span the eigenspace of $\lambda_k-\lambda_1$ for $0\leq k\leq \lfloor N/2\rfloor$.
  In particular, we have:
  \begin{enumerate}
  \item There is only one positive eigenvalue $-\lambda_1$ of $D$, and the eigenspace of $D$
  corresponding to it is spanned by
\begin{equation}\label{euclidtrans}
  \begin{pmatrix}
      \mathbf{1} \\
      \mathbf{0} \\
    \end{pmatrix},\quad \begin{pmatrix}
      \mathbf{0} \\
       \mathbf{1}\\
    \end{pmatrix}.
\end{equation}
\item The eigenspace of $D$ corresponding to the eigenvalue 0 is spanned by
\begin{equation}\label{lineartransp1}
  \begin{pmatrix}
      c_1 \\
      \mathbf{0} \\
    \end{pmatrix},\quad \begin{pmatrix}
      s_1 \\
      \mathbf{0} \\
    \end{pmatrix},\quad\begin{pmatrix}
      \mathbf{0} \\
       c_1\\
    \end{pmatrix},\quad \begin{pmatrix}
      \mathbf{0} \\
       s_1\\
    \end{pmatrix}.
\end{equation}
\item The other eigenvalues of $D$ are negative.
\end{enumerate}
\end{lemma}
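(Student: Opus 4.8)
The plan is to read off everything from the spectral data of the circulant matrix $M$ in (\ref{linearM}), already recorded in (\ref{eigofM}) and in the discussion around (\ref{realeig}). Since $M$ is real and symmetric, the real and imaginary parts of any complex eigenvector are again real eigenvectors for the same real eigenvalue; applied to the $P_k$ of (\ref{eigvecofM}) this gives $Mc_k=\lambda_k c_k$ and $Ms_k=\lambda_k s_k$, with $\lambda_k=-4\sin^2(k\theta)$ and $\theta=\pi/N$. As already observed before the statement, $s_0=\mathbf{0}$ and, when $N$ is even, $s_{N/2}=\mathbf{0}$ (because $P_{N/2}=(1,-1,1,-1,\dots)^T$ is real), whereas for every other $k$ with $0\le k\le\lfloor N/2\rfloor$ the pair $\{c_k,s_k\}$ is linearly independent. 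Counting shows that these vectors furnish exactly $N$ linearly independent eigenvectors of $M$, hence an eigenbasis of $\mathbb{R}^N$.

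The block form of $D$ then finishes the structural part. If $Mv=\mu v$ then, directly from $D=-\lambda_1 I+\mathrm{diag}(M,M)$, we have $D(v,\mathbf{0})^T=(\mu-\lambda_1)(v,\mathbf{0})^T$ and $D(\mathbf{0},v)^T=(\mu-\lambda_1)(\mathbf{0},v)^T$. Taking $v=c_k$ and $v=s_k$ produces the four vectors displayed in (\ref{eigofD}), each with eigenvalue $\lambda_k-\lambda_1$; doubling the $\mathbb{R}^N$-eigenbasis of the previous paragraph yields $2N$ linearly independent eigenvectors of $D$, which is therefore an eigenbasis of $\mathbb{R}^{2N}$, and in particular every eigenvalue of $D$ has the form $\lambda_k-\lambda_1$ for some $0\le k\le\lfloor N/2\rfloor$.

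It remains to sort these eigenvalues by sign. On the integer range $0\le k\le\lfloor N/2\rfloor$ the quantity $\sin^2(k\theta)$ is strictly increasing, since $k\theta$ runs over a subset of $[0,\pi/2]$ and $\sin$ is strictly increasing there; hence $\lambda_1<0$, $\lambda_k<\lambda_1$ for all $k\ge2$, and $\lambda_0=0>\lambda_1$. Consequently $\lambda_k-\lambda_1>0$ only for $k=0$, which (after doubling $c_0=\mathbf{1}$) gives the unique positive eigenvalue $-\lambda_1$ with eigenspace spanned by the vectors in (\ref{euclidtrans}) --- statement (1); $\lambda_k-\lambda_1=0$ only for $k=1$, giving the $0$-eigenspace spanned by the four vectors in (\ref{lineartransp1}) --- statement (2); and $\lambda_k-\lambda_1<0$ for all $k\ge2$, which is statement (3). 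The computation is routine, and the only bookkeeping requiring care is the two degenerate cases $k=0$ and $k=N/2$, together with the observation that $k=1$ is \emph{not} degenerate (as $1<N/2$ when $N\ge4$), so that the $0$-eigenspace is genuinely four-dimensional. I do not expect any real obstacle beyond this case-checking.
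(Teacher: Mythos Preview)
Your proof is correct and is exactly the ``straightforward calculation'' the paper alludes to but omits; the paper states the lemma without proof, and your argument---reading off the block structure of $D$ from the known eigendata of the circulant $M$, then sorting $\lambda_k-\lambda_1$ by sign via the monotonicity of $\sin^2(k\theta)$ on $[0,\pi/2]$---is precisely the intended computation. Your handling of the degenerate cases $k=0$ and $k=N/2$ and the dimension count is careful and correct.
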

The eigenvectors corresponding to the positive eigenvalue correspond to Euclidean translations in $\mathbb{C}^N$, while the
eigenvectors corresponding to the eigenvalue 0 correspond to linear transformations of the regular $N$-gon $P_1$.

We will use the following lemma to analyze the definiteness of the matrix $E$.
\begin{lemma}\label{lapcycle}
  Let $A$ be a matrix of the form:
  $$A=\left(
                       \begin{array}{cccccc}
                         -(a_0+a_{n-1}) & a_{0} & 0 & \cdots & 0 & a_{n-1} \\
                         a_{0} & -(a_{0}+a_{1}) & a_{1} & 0 & \ddots & 0 \\
                         0 & \ddots & \ddots & \ddots & 0 & \vdots \\
                         \vdots & 0 & a_{k-1} & -(a_{k-1}+a_{k}) & a_{k}& 0 \\
                         0 & \ddots & 0 & \ddots & \ddots & \ddots \\
                         a_{n-1} & 0 & \cdots & 0 & a_{n-2} & -(a_{n-2}+a_{n-1}) \\
                       \end{array}
                     \right).$$
                     Then for any vectors $x=(x_0,\cdots,x_{n-1})$ and $y=(y_0,\cdots,y_{n-1})\in \mathbb{R}^n$, we have $$xAy^T=-\sum_{k=0}^{n-1}a_k(x_{k+1}-x_k)(y_{k+1}-y_k).$$ In particular, $A$ is negative semidefinite if $a_k\geq0$ for all $0\leq k\leq n-1$.
\end{lemma}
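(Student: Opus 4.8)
The plan is to rewrite $A$ as a weighted sum of rank-one ``edge'' matrices and then read off the bilinear form directly; this is cleaner than expanding the double sum $xAy^T=\sum_{i,j}x_iA_{ij}y_j$ and regrouping, and it makes transparent the connection with the weighted graph Laplacian on the $n$-cycle noted earlier. Let $e_0,\dots,e_{n-1}$ denote the standard basis of $\mathbb{R}^n$, with indices read modulo $n$. The first step is to check the identity
\begin{equation*}
  A \;=\; -\sum_{k=0}^{n-1} a_k\,(e_{k+1}-e_k)(e_{k+1}-e_k)^T .
\end{equation*}
This is verified entrywise: the $k$-th summand places $a_k$ in positions $(k,k+1)$ and $(k+1,k)$, places $-a_k$ in positions $(k,k)$ and $(k+1,k+1)$, and is zero elsewhere. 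Summing over $k$, the diagonal entry $(j,j)$ collects $-a_{j-1}-a_j$, the super/sub-diagonal entry $(j,j+1)$ collects $a_j$, and the corner entry $(n-1,0)=(0,n-1)$ collects $a_{n-1}$ --- exactly the matrix displayed in the statement.

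Granting the decomposition, the second step is the computation
\begin{equation*}
  xAy^T \;=\; -\sum_{k=0}^{n-1} a_k\,\big(x(e_{k+1}-e_k)\big)\big((e_{k+1}-e_k)^T y^T\big) \;=\; -\sum_{k=0}^{n-1} a_k\,(x_{k+1}-x_k)(y_{k+1}-y_k),
\end{equation*}
using $x(e_{k+1}-e_k)=x_{k+1}-x_k$ and $(e_{k+1}-e_k)^T y^T = y_{k+1}-y_k$. This is the asserted identity. For the ``in particular'' clause, take $y=x$: then $xAx^T=-\sum_{k=0}^{n-1}a_k(x_{k+1}-x_k)^2$, which is $\le 0$ whenever all $a_k\ge 0$, so $A$ is negative semidefinite.

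I do not expect a genuine obstacle here; the computation is routine linear algebra. The only point requiring care is the cyclic index bookkeeping --- in particular making sure the single wrap-around edge $\{n-1,0\}$ is counted exactly once (with weight $a_{n-1}$) in the entrywise check, and that each diagonal entry receives contributions from precisely the two edges incident to it.
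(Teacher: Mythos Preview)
Your proof is correct. The paper's argument is also a direct calculation, but organized differently: it first computes the vector $Ay^T$, whose $k$-th entry is $a_{k-1}(y_{k-1}-y_k)+a_k(y_{k+1}-y_k)$, and then dots this with $x$, regrouping the resulting sum by edges to obtain $-\sum_k a_k(x_{k+1}-x_k)(y_{k+1}-y_k)$. Your route instead establishes the rank-one decomposition $A=-\sum_k a_k(e_{k+1}-e_k)(e_{k+1}-e_k)^T$ first, from which the bilinear form and the semidefiniteness are immediate. The two are equivalent in content, but your decomposition makes the weighted graph-Laplacian structure explicit and avoids the need to regroup terms at the end; the paper's version, by contrast, requires no preliminary entrywise verification of a matrix identity. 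Either is perfectly adequate for this routine lemma.
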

\begin{proof}
  This is a straightforward calculation:
  \begin{displaymath}
    \begin{split}
      xAy^T & =x\cdot\begin{pmatrix}
                       a_0(y_1-y_0)+a_{n-1}(y_{n-1}-y_0) \\
                       \vdots \\
                       a_{k-1}(y_{k-1}-y_k)+a_{k}(y_{k+1}-y_k) \\
                       \vdots \\
                       a_{n-2}(y_{n-2}-y_{n-1})+a_{n-1}(y_0-y_{n-1}) \\
                     \end{pmatrix}
       \\
        &=-\sum_{k=0}^{n-1}a_k(x_{k+1}-x_k)(y_{k+1}-y_k).
    \end{split}
  \end{displaymath}
\end{proof}

We now look at the matrix $E$.
\begin{lemma}\label{eigofE}
  The $2N\times 2N$ matrix $E=\begin{pmatrix}
                                A & C \\
                                C &B \\
                              \end{pmatrix}
  $ is negative semidefinite. Moreover, the 0-eigenspace consists of the vectors of the following form:
  \begin{displaymath}
    \{X\in\mathbb{R}^{2N}:\sin\left[(2k+1)\theta\right](x_{k+1}^r-x_k^r)=\cos\left[(2k+1)\theta\right](x_{k+1}^i-x_k^i)
    \text{ for all } k=0,\cdots,N-1\},
  \end{displaymath} where $X=(x_0^r,\cdots,x_{N-1}^r,x_0^i,\cdots,x_{N-1}^i)$ and $\theta=\pi/N.$
\end{lemma}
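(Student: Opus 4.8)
The plan is to write the quadratic form $X E X^T$ explicitly as a sum of squares using the block structure of $E$ together with Lemma \ref{lapcycle}. Write $X = (X^r, X^i)^T$ with $X^r = (x_0^r,\dots,x_{N-1}^r)$ and $X^i = (x_0^i,\dots,x_{N-1}^i)$. Then
\[
X E X^T = X^r A (X^r)^T + 2\, X^r C (X^i)^T + X^i B (X^i)^T .
\]
Each of the three matrices $A$, $B$, $C$ has exactly the cyclic tridiagonal shape of Lemma \ref{lapcycle}: indeed, reading off the entries in Theorem \ref{linearrealthm}, $A$ has off-diagonal weights $a_k := \sin^2(2k+1)\theta$, $B$ has weights $b_k := \cos^2(2k+1)\theta$, and $C$ has weights $c_k := -\cos(2k+1)\theta\sin(2k+1)\theta$ (one checks the diagonal entries are minus the sum of the two adjacent off-diagonal weights, as required by the hypothesis of Lemma \ref{lapcycle}, with indices mod $N$). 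Applying Lemma \ref{lapcycle} to each term gives
\[
X E X^T = -\sum_{k=0}^{N-1}\Big[ a_k\,(\Delta_k^r)^2 + 2 c_k\, \Delta_k^r \Delta_k^i + b_k\,(\Delta_k^i)^2 \Big],
\]
where $\Delta_k^r := x_{k+1}^r - x_k^r$ and $\Delta_k^i := x_{k+1}^i - x_k^i$.

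Next I would observe that for each fixed $k$ the inner bracket is a perfect square: since $a_k = \sin^2(2k+1)\theta$, $b_k = \cos^2(2k+1)\theta$ and $c_k = -\sin(2k+1)\theta\cos(2k+1)\theta$, we have
\[
a_k(\Delta_k^r)^2 + 2c_k\,\Delta_k^r\Delta_k^i + b_k(\Delta_k^i)^2 = \big(\sin(2k+1)\theta\,\Delta_k^r - \cos(2k+1)\theta\,\Delta_k^i\big)^2 .
\]
Hence $X E X^T = -\sum_{k=0}^{N-1}\big(\sin(2k+1)\theta\,\Delta_k^r - \cos(2k+1)\theta\,\Delta_k^i\big)^2 \le 0$, which proves that $E$ is negative semidefinite. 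Moreover this expression vanishes if and only if every summand vanishes, i.e. if and only if $\sin(2k+1)\theta\,(x_{k+1}^r - x_k^r) = \cos(2k+1)\theta\,(x_{k+1}^i - x_k^i)$ for all $k = 0,\dots,N-1$, which is exactly the claimed description of the kernel of $E$ (noting that $E$ is symmetric, so its nullspace coincides with the null cone of the quadratic form).

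The only genuine obstacle is purely bookkeeping: verifying that $A$, $B$, $C$ really do have the cyclic-tridiagonal form required by Lemma \ref{lapcycle} — in particular that the diagonal entry $c_{kk}$ equals $-(c_{k-1}+c_k) = \cos(2k-1)\theta\sin(2k-1)\theta + \cos(2k+1)\theta\sin(2k+1)\theta$ and similarly for $A$ and $B$, with all indices read modulo $N$ so that the corner entries $a_{0,N-1}$ etc.\ match — and then matching the trigonometric identity that collapses the three terms into one square. Both are direct computations with the data already recorded in Theorem \ref{linearrealthm}, so no new idea is needed beyond Lemma \ref{lapcycle}.
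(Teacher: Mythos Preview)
Your proof is correct and follows essentially the same approach as the paper: both expand $XEX^T$ via the block structure, apply Lemma \ref{lapcycle} to each of $A$, $B$, $C$ with the appropriate trigonometric weights, and then complete the square to obtain $-\sum_k \big(\sin(2k+1)\theta\,\Delta_k^r - \cos(2k+1)\theta\,\Delta_k^i\big)^2$. Your additional remark that symmetry of $E$ identifies its nullspace with the null cone of the quadratic form is a nice explicit justification that the paper leaves implicit.
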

\begin{proof}
  We have
  \begin{displaymath}
    \begin{split}
      XEX^T & =(X^r,X^i)\begin{pmatrix}
                                A & C \\
                                C &B \\
                              \end{pmatrix}\begin{pmatrix}
                                             (X^r)^T \\
                                             (X^i)^T \\
                                           \end{pmatrix}
                               \\
        & =X^rA(X^r)^T+X^rC(X^i)^T+X^iC(X^r)^T+X^iB(X^i)^T\\
        & =X^rA(X^r)^T+2X^rC(X^i)^T+X^iB(X^i)^T,
    \end{split}
  \end{displaymath}where the last identity follows from the fact that $C$ is symmetric.

  Applying Lemma \ref{lapcycle} to compute  $X^rA(X^r)^T$, $X^iB(X^i)^T$, and $X^rC(X^i)^T$ and letting $a_k=\sin^2[(2k+1)\theta],\cos^2[(2k+1)\theta]$ and $-\cos[(2k+1)\theta]\sin[(2k+1)\theta]$, we have
  \begin{displaymath}
    \begin{split}
      XEX^T & =-\sum_{k=0}^{N-1}\left[\sin^2\big((2k+1)\theta\big)(x_{k+1}^r-x_k^r)^2+\cos^2\big((2k+1)\theta\big)(x_{k+1}^i-x_k^i)^2\right] \\
        & +\sum_{k=0}^{N-1}2\cos\big((2k+1)\theta\big)\sin\big((2k+1)\theta\big)(x_{k+1}^r-x_k^r)(x_{k+1}^i-x_k^i)\\
        &=-\sum_{k=0}^{n-1}\left[\sin\big((2k+1)\theta\big)(x_{k+1}^r-x_k^r)-\cos\big((2k+1)\theta\big)(x_{k+1}^i-x_k^i)\right]^2\leq0.
    \end{split}
  \end{displaymath}
\end{proof}

In order to understand $D+\beta E$, we need a better understanding of how the eigenspaces of $D$ and $E$ intersect.

\begin{lemma}\label{eigspaceofd+eta}
  Let $D_0$ and $E_0$ denote the 0-eigenspace of $D$ and $E$, respectively.
  Then we have if $N\geq 5$, \begin{displaymath}
    D_0\cap E_0=\{z=tiP_1\in\mathbb{C}^N:t\in\mathbb{R}\},
  \end{displaymath} and if $N=4$,  \begin{displaymath}
    D_0\cap E_0=\{z=tiP_1+s\overline{P_1}\in\mathbb{C}^N:t,s\in\mathbb{R}\}.
    \end{displaymath}
\end{lemma}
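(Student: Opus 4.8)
The goal is to compute the intersection $D_0 \cap E_0$, where $D_0$ is the $0$-eigenspace of $D$ (spanned by the four vectors in (\ref{lineartransp1}), i.e.\ the real and imaginary parts of $P_1$ and $\overline{P_1}$ placed in the real and imaginary slots) and $E_0$ is the kernel of $E$ characterized in Lemma \ref{eigofE}. The approach is to parametrize a general element of $D_0$ explicitly, substitute it into the kernel condition from Lemma \ref{eigofE}, and solve.

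First I would make the parametrization of $D_0$ explicit. Writing elements of $\mathbb{C}^N$ in the real form $X=(X^r,X^i)$, the basis (\ref{lineartransp1}) is $(c_1,\mathbf 0),(s_1,\mathbf 0),(\mathbf 0,c_1),(\mathbf 0,s_1)$, where $(c_1)_k=\cos 2k\theta$, $(s_1)_k=\sin 2k\theta$, $\theta=\pi/N$. So a general $X\in D_0$ has $x_k^r = p\cos 2k\theta + q\sin 2k\theta$ and $x_k^i = r\cos 2k\theta + s\sin 2k\theta$ for real constants $p,q,r,s$; equivalently, identifying $\mathbb C\cong\mathbb R^2$ componentwise, $X$ corresponds to the complex vector $z$ with $z_k = (p+ir)\cos 2k\theta + (q+is)\sin 2k\theta = \mu\, e^{2ik\theta} + \nu\, e^{-2ik\theta}$ for suitable $\mu,\nu\in\mathbb C$ — that is, $z = \mu P_1 + \nu \overline{P_1}$. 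Thus $D_0 = \{\mu P_1 + \nu\overline{P_1} : \mu,\nu\in\mathbb C\}$, a real $4$-dimensional space. The task reduces to finding which $(\mu,\nu)$ also satisfy the $E$-kernel condition.
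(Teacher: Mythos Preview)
Your setup is correct and mirrors the paper's own approach exactly: write a general element of $D_0$ as $x_k^r = p\cos 2k\theta + q\sin 2k\theta$, $x_k^i = r\cos 2k\theta + s\sin 2k\theta$ (the paper uses the labels $a_{11},a_{12},a_{21},a_{22}$), then impose the $E_0$ condition from Lemma~\ref{eigofE}. Your complex repackaging $z=\mu P_1+\nu\overline{P_1}$ is a nice way to phrase $D_0$, but it is not yet a proof --- the proposal stops precisely where the actual content of the lemma begins.

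What is missing is the substitution and the analysis of the resulting linear system, which is where the dichotomy between $N\ge 5$ and $N=4$ appears. Concretely, plugging the parametrization into
\[
\sin\big((2k+1)\theta\big)(x_{k+1}^r-x_k^r)=\cos\big((2k+1)\theta\big)(x_{k+1}^i-x_k^i)
\]
and simplifying gives, for each $k$,
\[
-\sin^2\big((2k+1)\theta\big)\,p+\cos\big((2k+1)\theta\big)\sin\big((2k+1)\theta\big)\,(q+r)-\cos^2\big((2k+1)\theta\big)\,s=0.
\]
Comparing $k=0$ with $k=N-1$ forces $q+r=0$, leaving $p\sin^2\big((2k+1)\theta\big)+s\cos^2\big((2k+1)\theta\big)=0$ for all $k$. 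For $N\ge 5$ the values $\sin^2\theta/\cos^2\theta$ and $\sin^2 3\theta/\cos^2 3\theta$ are distinct, so the $k=0,1$ equations are independent and force $p=s=0$; this yields the one real parameter corresponding to $\mu\in i\mathbb R$, $\nu=0$, i.e.\ $tiP_1$. For $N=4$ one has $\sin^2\big((2k+1)\pi/4\big)=\cos^2\big((2k+1)\pi/4\big)$ for every $k$, so the system degenerates to the single relation $p+s=0$, giving a two-parameter family --- the extra direction being $\overline{P_1}$. Without this step (or an equivalent argument in the $(\mu,\nu)$ variables showing that the $E_0$ condition forces $\mathrm{Re}\,\mu=0$, $\nu=0$ when $N\ge 5$ but only $\mathrm{Re}\,\mu=\mathrm{Im}\,\nu=0$ when $N=4$), the proof is incomplete.
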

Note that $iP_1$ generates rotations of the polygon and $\overline{P_1}$ is the same polygon as $P_1$ but oriented in the
opposite direction. The span of $P_1$ and $\overline{P_1}$ generate all linear tranformations of $P_1$.
\begin{proof}
  Let $X^r=(x_0^r,\cdots,x_{N-1}^r),X^i=(x_0^i,\cdots,x_{N-1}^i)\in \mathbb{R}^N$, and suppose $X=(X^r,X^i)\in D_0\cap E_0$. Lemma \ref{eigenspaceofD} says there exist real numbers $a_{11},a_{12},a_{21}$ and $a_{22}$ such that
  \begin{displaymath}
  \left\{\begin{split}
    x_k^r & = a_{11}\cos(2k\theta)+a_{12}\sin(2k\theta), \\
    x_k^i  & =a_{21}\cos(2k\theta)+a_{22}\sin(2k\theta),
  \end{split}\right.
  \end{displaymath}for $k=0,\cdots,N-1$ and $\theta=\pi/N.$ Substituting this into the equation
  $$\sin\left[(2k+1)\theta\right](x_{k+1}^r-x_k^r)=\cos\left[(2k+1)\theta\right](x_{k+1}^i-x_k^i),$$
  we obtain the following $N$ linear equations for $a_{11},a_{12},a_{21},a_{22}$:
  \begin{equation}\label{inter0eig2}
    -\sin^2[(2k+1)\theta]a_{11}+\cos[(2k+1)\theta]\sin[(2k+1)\theta](a_{12}+a_{21})-\cos^2[(2k+1)\theta]a_{22}=0,
  \end{equation} for $k=0,\cdots,N-1.$
  By subtracting (\ref{inter0eig2}) with $k=0$ from the same equation with $k=N-1$, we find that
  \begin{equation}\label{inter0eigeqn}
    a_{12}+a_{21}=0.
  \end{equation}
  This reduces (\ref{inter0eig2}) to the following:
  \begin{equation}\label{inter0eig3}
       -a_{11}\sin^2[(2k+1)\theta]-a_{22}\cos^2[(2k+1)\theta]=0,
  \end{equation} for $k=0,\cdots,N-1.$
  Applying $k=0$ and $k=1$ to (\ref{inter0eig3}), we have
  \begin{equation}
    \left\{\begin{array}{c}
      -a_{11}\sin^2\theta-a_{22}\cos^2\theta=0, \\
      -a_{11}\sin^2[3\theta]-a_{22}\cos^2[3\theta]=0.
    \end{array}\right.
  \end{equation} The determinant of the corresponding matrix is $\cos^2(3\theta)\sin^2\theta-\cos^2\theta\sin^2(3\theta)=\cos^2(3\theta)\cos^2\theta[\tan^2\theta-\tan^2(3\theta)]\neq0$ for $N\geq 5$, and which gives
  \begin{equation}\label{inter0eigeqn=5}
    a_{11}=a_{22}=0,
  \end{equation}
  Combining (\ref{inter0eigeqn}) and (\ref{inter0eigeqn=5}), we have
   \begin{displaymath}
     \left\{\begin{split}
       x_k^r & =t\sin[(2k\theta)],\\
        x_k^i &=-t\cos[(2k\theta)],
     \end{split}\right.
   \end{displaymath} for some $t\in\mathbb{R}$. Viewed in $\mathbb{C}^N$, this gives
   \begin{displaymath}
    D_0\cap E_0=\{z=-tiP_1\in\mathbb{C}^N:t\in\mathbb{R}\}.
  \end{displaymath}

  For the case $N=4$, since $\sin^2[(2k+1)\pi/4]=\cos^2[(2k+1)\pi/4]$ for $k=0,\ldots,4,$ the system (\ref{inter0eig3}) reduces to
  \begin{equation}\label{inter0eigeqn=4}
    -a_{11}-a_{22}=0.
  \end{equation}
  Combining (\ref{inter0eigeqn}) and (\ref{inter0eigeqn=4}), we get
  \begin{displaymath}
    D_0\cap E_0=\{z=-tiP_1+s\overline{P_1}\in\mathbb{C}^N:t,s\in\mathbb{R}\}.
  \end{displaymath}

\end{proof}
We now compute the stable, unstable, and center eigenspaces of the linearized system (\ref{linearreal}) at $P_1$. 

\begin{theorem}\label{eigspaceofd+e}
  Consider the $2N\times 2N$ matrix $D+\beta E=\left[-\lambda_1 I+\begin{pmatrix}
                                    M & 0 \\
                                    0 & M \\
                                  \end{pmatrix}\right]+\beta\begin{pmatrix}
                                    A & C \\
                                    C & B \\
                                  \end{pmatrix}$, where $A,B,C$ and $M$ are the matrices described in Theorem \ref{linearrealthm}. $D+\beta E$  has a positive eigenvalue $-\lambda_1$ with a two-dimensional eigenspace $E^u$ generated by the the vectors in (\ref{euclidtrans}). For $N\geq 5$, the 0-eigenspace $E^c$ is one dimensional, spanned by the vector $iP_1$; for $N=4$, the 0-eigenspace $E^c$ is two dimensional spanned by $\{iP_1, \overline{P_1}\}$. The remaining eigenvalues are negative. We call the span of these eigenspaces $E^s$.
\end{theorem}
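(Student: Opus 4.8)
Proof plan for Theorem \ref{eigspaceofd+e}.

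The plan is to exploit that $D+\beta E$ is real symmetric, together with the sign information $E\preceq 0$ from Lemma \ref{eigofE} and $\beta>0$. These give
$$X^{T}(D+\beta E)X=X^{T}DX+\beta\,X^{T}EX\le X^{T}DX\qquad\text{for all }X\in\mathbb{R}^{2N}.$$
By Lemma \ref{eigenspaceofD} the largest eigenvalue of $D$ is $-\lambda_1$, so this estimate forces every eigenvalue of $D+\beta E$ to be $\le-\lambda_1$. Everything then reduces to pinning down exactly where the Rayleigh quotient of $D+\beta E$ attains the values $-\lambda_1$ and $0$.

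First I would identify the top eigenspace. The two translation vectors in (\ref{euclidtrans}) are constant in each real slot, so all of their coordinate differences vanish, and by the description of $E_0$ in Lemma \ref{eigofE} they lie in $\ker E$; they also span the $-\lambda_1$-eigenspace of $D$ (the $k=0$ eigenspace). Hence $D+\beta E$ acts as $-\lambda_1 I$ on the two-dimensional space $E^u$ spanned by (\ref{euclidtrans}). Conversely, if $\|X\|=1$ and $X^{T}(D+\beta E)X=-\lambda_1$, the displayed inequality forces $X^{T}DX=-\lambda_1$, so $X$ lies in the top eigenspace of $D$, which is exactly $E^u$ by Lemma \ref{eigenspaceofD}. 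Since for a symmetric matrix the maximizers of the Rayleigh quotient form precisely the top eigenspace, the $-\lambda_1$-eigenspace of $D+\beta E$ equals $E^u$.

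Next I would restrict to $V:=(E^u)^{\perp}$, which is invariant under the symmetric operator $D+\beta E$ because $E^u$ is an eigenspace. The subspace $V$ contains all eigenspaces of $D$ with eigenvalues $\lambda_k-\lambda_1$ for $k=1,\dots,\lfloor N/2\rfloor$, and since $\sin^{2}(\pi k/N)$ is strictly increasing for $k\in\{1,\dots,\lfloor N/2\rfloor\}$ we have $\lambda_1>\lambda_2>\cdots$; thus $0=\lambda_1-\lambda_1$ is the \emph{strict} largest eigenvalue of $D|_{V}$, with eigenspace the four-dimensional $D_0$ spanned by the vectors in (\ref{lineartransp1}). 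Therefore $X^{T}(D+\beta E)X\le X^{T}DX\le0$ on $V$, so the remaining eigenvalues of $D+\beta E$ are $\le0$. Moreover, the $0$-eigenspace of $D+\beta E$ lies in $V$ (being orthogonal to the $(-\lambda_1)$-eigenspace $E^u$), and for $X\in V$ the equality $X^{T}(D+\beta E)X=0$ forces both $X^{T}DX=0$ and $X^{T}EX=0$, i.e. $X\in D_0\cap E_0$. By Lemma \ref{eigspaceofd+eta} this intersection is $\mathrm{span}(iP_1)$ when $N\ge5$ and $\mathrm{span}(iP_1,\overline{P_1})$ when $N=4$. Conversely, $iP_1$ has real and imaginary parts proportional to $s_1$ and $c_1$, which are eigenvectors of $M$ with eigenvalue $\lambda_1$, so $iP_1\in D_0$; it lies in $E_0$ by Lemma \ref{eigspaceofd+eta}; hence $(D+\beta E)(iP_1)=0$, and likewise $(D+\beta E)(\overline{P_1})=0$ for $N=4$, using $\lambda_{N-1}=\lambda_1$ so that $\overline{P_1}=P_{N-1}\in D_0$. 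Thus the $0$-eigenspace of $D+\beta E$ is exactly $E^c$ as claimed, and every eigenvalue outside $E^u\oplus E^c$ is strictly negative; calling the span of those eigenvectors $E^s$ completes the proof.

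The argument is essentially a repackaging of the earlier lemmas, so I do not anticipate a serious obstacle. The one point that needs care is the strictness in the Rayleigh-quotient steps: one must use that $-\lambda_1$ has multiplicity exactly two in $D$ and that $0$ is the \emph{strict} top eigenvalue of $D|_{V}$ (which is precisely where the monotonicity of $\sin^{2}(\pi k/N)$ enters) in order to conclude that the two maximizer sets are exactly $E^u$ and $D_0$ rather than larger spaces; one must also verify that the claimed generators $iP_1$ and (for $N=4$) $\overline{P_1}$ genuinely lie in both $D_0$ and $E_0$, so that they really are $0$-eigenvectors of $D+\beta E$.
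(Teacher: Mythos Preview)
Your proposal is correct and follows essentially the same approach as the paper: both arguments use that $E\preceq 0$ and that $D$ is negative semidefinite on $(E^u)^\perp$ to reduce the identification of the $(-\lambda_1)$- and $0$-eigenspaces of $D+\beta E$ to the computations already done in Lemmas \ref{eigenspaceofD}, \ref{eigofE}, and \ref{eigspaceofd+eta}. Your write-up is in fact a bit more careful than the paper's, making explicit the Rayleigh-quotient equality cases and the strictness of $\lambda_1-\lambda_1>\lambda_2-\lambda_1$ that guarantee the eigenspaces are \emph{exactly} $E^u$ and $D_0\cap E_0$ rather than possibly larger.
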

\begin{proof}
  For any $a,b\in\mathbb{R}$, let $\mathbf{a}=(a,\cdots,a),\mathbf{b}=(b,\cdots,b)\in\mathbb{R}^N$. It is a straightforward calculation to check that $(D+\beta E)(\mathbf{a},\mathbf{b})^T=-\lambda_1(\mathbf{a},\mathbf{b})^T$. Let $V$ be the space generated by the vectors (\ref{euclidtrans}) and denote the orthogonal complement by $V^\perp$. Lemma \ref{eigenspaceofD} and Lemma \ref{eigofE} imply that $D+\beta E$ is negative semidefinite on $V^\perp$, and this implies that there cannot be any other positive eigenvalues. In particular, $-\lambda_1$ is the only positive eigenvalue and its eigenspace is $V$.

 Applying Lemma \ref{eigspaceofd+eta}, we obtain the 0-eigenspace $D_0\cap E_0$ for $D+\beta E$.

    Finally, on the orthogonal complement $W$ of $V\bigoplus(D_0\cap E_0)$, both $D$ and $E$ are negative semidefinite, but any nonzero vector in $W$ must miss either $D_0$ or $E_0$. Therefore, $D+\beta E$ is negative definite on $W$, which implies that all of the other eigenvalues of $D+\beta E$ are negative.
\end{proof}


The following result is an immediately consequence of Theorem \ref{eigspaceofd+e}.

\begin{theorem}\label{p1stable}
  Perturbations of $P_1$ that are orthogonal to $E^u$ and $E^c$ converge to $P_1$ when evolved by the linearized system (\ref{linearreal}).
\end{theorem}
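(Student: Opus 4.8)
The plan is to deduce Theorem \ref{p1stable} directly from the spectral decomposition obtained in Theorem \ref{eigspaceofd+e}, treating it as a standard statement about linear autonomous ODEs. First I would record that the linearized system \eqref{linearreal} has the form $dY/d\tau = (D+\beta E)Y$, and that by Theorem \ref{eigspaceofd+e} the matrix $D+\beta E$ is a \emph{symmetric} matrix (indeed $D$ is symmetric since $M$ is, and $E$ is symmetric by Lemmas \ref{lapcycle} and \ref{eigofE}), so it is diagonalizable over $\mathbb{R}$ with an orthonormal eigenbasis. Consequently $\mathbb{R}^{2N}$ decomposes orthogonally as $E^u \oplus E^c \oplus E^s$, where $E^u$ is the $(-\lambda_1)$-eigenspace (two-dimensional, positive eigenvalue), $E^c$ is the $0$-eigenspace ($\dim 1$ if $N\geq 5$, $\dim 2$ if $N=4$), and $E^s$ is the span of the remaining eigenvectors, all with strictly negative eigenvalues.

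Next I would observe that each of these three subspaces is invariant under the flow of \eqref{linearreal}, since they are spanned by eigenvectors of $D+\beta E$. Given an initial perturbation $Y_0$ orthogonal to $E^u$ and $E^c$, orthogonality places $Y_0 \in (E^u \oplus E^c)^\perp = E^s$ (here I use that the decomposition is orthogonal, which is exactly where symmetry of $D+\beta E$ is needed). Then $Y(\tau) = e^{(D+\beta E)\tau}Y_0$ stays in $E^s$ for all $\tau$, and expanding $Y_0 = \sum_j c_j v_j$ in an orthonormal eigenbasis of $E^s$ with eigenvalues $\mu_j < 0$ gives $Y(\tau) = \sum_j c_j e^{\mu_j \tau} v_j$, whence $\|Y(\tau)\| \to 0$ exponentially as $\tau \to \infty$. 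Since $P_1$ is the equilibrium of \eqref{realselfauto} around which we linearized, ``$Y(\tau)\to 0$'' is precisely the statement that the perturbed polygon converges to $P_1$ under the linearized flow.

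There is essentially no hard part here: the content has already been extracted into Theorem \ref{eigspaceofd+e}, and what remains is the textbook fact that a linear system governed by a symmetric matrix, restricted to the span of its negative eigenspaces, decays exponentially. The only point requiring a word of care is the passage from ``orthogonal to $E^u$ and $E^c$'' to ``lies in $E^s$'': this uses that $D+\beta E$ is symmetric so that distinct eigenspaces are mutually orthogonal and $E^u \oplus E^c \oplus E^s = \mathbb{R}^{2N}$ is an orthogonal direct sum; I would state this explicitly. (One could alternatively phrase everything without symmetry using the generalized eigenspace decomposition and the spectral projections, but since symmetry is available it is cleanest to use it.) I would keep the proof to a few lines, citing Theorem \ref{eigspaceofd+e} for the spectrum and noting the exponential decay on $E^s$.
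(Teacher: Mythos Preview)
Your proposal is correct and matches the paper's approach: the paper simply states that Theorem \ref{p1stable} is an immediate consequence of Theorem \ref{eigspaceofd+e}, and your argument spells out exactly the standard linear-ODE reasoning behind that sentence, including the (correct and helpful) observation that $D+\beta E$ is symmetric so that $E^u\oplus E^c\oplus E^s$ is an orthogonal decomposition.
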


We have the following stability result for the scaling system (\ref{turnselfauto}).

\begin{theorem}\label{constructsemistable}
  Assume $N\geq 5$. Around the regular polygon $P_1$ there exists a $2N-2$ dimensional semi-stable manifold $\mathcal{W}\subseteq E^c\bigoplus E^s$ such that for any $x_0\in\mathcal{W}$, the trajectory $x(t)$ of (\ref{turnselfauto}) with $x(0)=x_0$ converges to the regular polygon $e^{i\eta}P_1$ for some $\eta\in[0,2\pi).$ In particular, there exists an open neighborhood $U\subseteq \mathbb{R}^{2N}$ containing $P_1$ such that $U\cap(E^c\bigoplus E^s)\subseteq\mathcal{W}. $
\end{theorem}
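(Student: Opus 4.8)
The plan is to invoke the center manifold theorem for the autonomous system (\ref{turnselfauto}) at the equilibrium $P_1$, using the splitting of the linearization into $E^u \oplus E^c \oplus E^s$ established in Theorem \ref{eigspaceofd+e}. First I would note that, by Theorem \ref{eigspaceofd+e}, the linearized operator $D + \beta E$ at $P_1$ has a two-dimensional unstable subspace $E^u$ (eigenvalue $-\lambda_1 > 0$), a one-dimensional center subspace $E^c = \mathbb{R}\, iP_1$, and a $(2N-3)$-dimensional stable subspace $E^s$ on which it is negative definite. The center manifold theorem then produces a locally invariant $C^k$ center-stable manifold $\mathcal{W}$ tangent to $E^c \oplus E^s$ at $P_1$, of dimension $2N-2$; trajectories starting on $\mathcal{W}$ stay on it (locally) and, since there is no unstable component, they do not escape along $E^u$.

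The second step is to identify the dynamics \emph{on} $\mathcal{W}$. The center direction $iP_1$ is not a genuine instability: it corresponds to the rotational symmetry of the regular polygon, i.e. $e^{i\eta}P_1$ is an equilibrium of (\ref{turnselfauto}) for every $\eta$, so the center manifold contains the whole circle of equilibria $\{e^{i\eta}P_1\}$. Hence the flow on $\mathcal{W}$ has a one-parameter family of fixed points filling out the center direction, and transverse to that circle the linearization is governed by $E^s$, on which $D+\beta E$ is strictly negative definite. By the reduction principle (local behavior on $\mathcal{W}$ mirrors the full system restricted there) together with this transverse contraction, every trajectory starting sufficiently close to $P_1$ on $\mathcal{W}$ converges to one of the equilibria $e^{i\eta}P_1$, with $\eta$ depending continuously on the initial point. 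The asymptotic phase $\eta$ is determined by which point on the circle of equilibria the stable fibers flow into.

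For the last assertion, I would argue that since $\mathcal{W}$ is tangent to $E^c \oplus E^s$ at $P_1$ and has the same dimension $2N-2$, it locally \emph{is} a graph over $E^c \oplus E^s$; so there is an open neighborhood $U$ of $P_1$ in $\mathbb{R}^{2N}$ with $U \cap (E^c \oplus E^s) \subseteq \mathcal{W}$ (this is just the statement that the graph agrees with its tangent space to first order, combined with invariance — one shrinks $U$ until the projection of $\mathcal{W}$ covers $U \cap (E^c\oplus E^s)$, using that $\mathcal{W}$ already contains the equilibrium circle).

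The main obstacle I expect is handling the center direction honestly: the center manifold theorem alone does not give convergence, and one must exploit the \emph{structural} fact that the zero eigenvalue comes entirely from the rotational symmetry, so that the center manifold is foliated by fixed points rather than carrying nontrivial slow dynamics. Making this rigorous requires either (i) quotienting by the $SO(2)$-action to reduce to a hyperbolic (pure stable) equilibrium and then lifting, or (ii) directly verifying that on $\mathcal{W}$ the full nonlinear vector field vanishes along $\{e^{i\eta}P_1\}$ and is transversally contracting, so that a Lyapunov function such as the distance to the equilibrium circle (or the energy $F_\alpha$ minus its value at $P_1$, in rescaled variables) is strictly decreasing off the circle. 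A secondary subtlety is checking that the $(2N-2)$-dimensional manifold genuinely contains $U \cap (E^c \oplus E^s)$ rather than merely being tangent to it; this follows once $\mathcal{W}$ is realized as a graph over that subspace, but one must be careful that the neighborhood $U$ is chosen after, not before, the graph representation is fixed.
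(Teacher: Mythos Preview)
Your approach via the center-stable manifold differs from the paper's. The paper instead builds $\mathcal{W}$ by first taking the $(2N-3)$-dimensional \emph{stable} manifold $\mathcal{W}^s$ of $P_1$ and then sweeping it around by the rotational symmetry of the flow: $\mathcal{W} = \coprod_{\eta\in[0,2\pi)} e^{i\eta}\mathcal{W}^s$. This sidesteps any analysis of slow dynamics on a center manifold, since every point of $\mathcal{W}$ lies by construction on the stable manifold of some $e^{i\eta}P_1$, and convergence is immediate from the stable manifold theorem. Your route---center-stable manifold plus the observation that the center direction is a normally hyperbolic circle of equilibria---is also a standard and valid strategy; the paper's version is simply more direct here because the $SO(2)$-action is explicit and one-dimensional, so ``quotienting'' can be done by hand.

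There is, however, a genuine gap in your argument for the last assertion. A center-stable manifold tangent to $E^c \oplus E^s$ is in general only a \emph{graph} over that subspace, not contained in it; having the projection of $\mathcal{W}$ cover $U \cap (E^c \oplus E^s)$ does not yield $U \cap (E^c \oplus E^s) \subseteq \mathcal{W}$ unless the graph map into $E^u$ vanishes identically. The missing ingredient is that $E^c \oplus E^s$ is itself \emph{invariant} under the full nonlinear flow (\ref{turnselfauto}): the center of mass $q = \tfrac{1}{N}\sum_j Y_j$ evolves by $dq/d\tau = -\lambda_1 q$, so the linear subspace $\{q=0\} = E^c \oplus E^s$ is preserved. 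The paper uses precisely this computation (to show $\mathcal{W}^s \subset E^c \oplus E^s$). Once you have it, you can simply take $\mathcal{W}$ to be a neighborhood of $P_1$ inside $E^c \oplus E^s$ itself---it already satisfies the defining properties of a local center-stable manifold---and the containment $U \cap (E^c \oplus E^s) \subseteq \mathcal{W}$ becomes tautological.
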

\begin{proof}
  The center manifold theorem and Theorem \ref{p1stable} implies that there exists a $2N-3$ dimensional stable manifold $\mathcal{W}^s$ for (\ref{turnselfauto}) in some neighbourhood of $P_1$.

  First, we claim that $\mathcal{W}^s$ is orthogonal to $E^u$. This means that $\mathcal{W}^s$ is a hypersurface of $E^c\bigoplus E^s$. In fact, let $x_0\in\mathcal{W}^s$ and $x(\tau)$ be the trajectories of (\ref{turnselfauto}) starting from $x_0$. It is clear that the center of mass $q=\sum_{i=0}^{N-1}x_i/N$ satisfies the following evolution: $$\frac{dq}{d\tau}=-\lambda_1q.$$ So if $q(0)$ is not zero, eventually $q(\tau)$ must go to infinity and so it cannot converge to $P_1$. This gives $x_0\in (E^u)^\perp=E^c\bigoplus E^s.$

  The rotational invariance of (\ref{turnselfauto}) implies that the stable manifold associates to $e^{i\eta}P_1$ is $e^{i\eta}\mathcal{W}^s$ for any $\eta\in[0,2\pi)$ . Moreover, $e^{i\eta_1}\mathcal{W}^s\cap e^{i\eta_2}\mathcal{W}^s=\emptyset$ if $\eta_1\neq\eta_2$.

  Now, consider the disjoint union $$\mathcal{W}:=\coprod_{\eta\in[0,2\pi)} e^{i\eta}\mathcal{W}^s.$$ $\mathcal{W}$ is a $2N-2$ dimensional manifold since it homeomorphic to $\mathcal{W}^s\times S^1.$ Moreover, $\mathcal{W}\bot E^u$ since $e^{i\eta}\mathcal{W}^s\bot E^u$ for any $\eta$. For any $x_0\in\mathcal{W}$, there exists a $\eta\in[0,2\pi)$ such that $x_0\in e^{i\eta}\mathcal{W}^s$, and therefore, the corresponding trajectories must converge to $e^{i\eta}P_1.$
\end{proof}

%

Collecting these results, we can prove Theorem \ref{stableshapethm5}.

\begin{proof}[Proof of Theorem \ref{stableshapethm5}]

   Let  $X_0\in\mathbb{C}^N$ be a regular $N$-gon, i.e., there exists a scaling $c$ and a Euclidean isometry $L$ such that $X_0=cLP_1$, where $P_1$ is the regular $N$-gon defined in (\ref{eigofM}).  Lemma \ref{leminvariant} and Theorem \ref{exist} show that $X_0$ and $P_1$ behave in a similar way under the evolution (\ref{flowmatrix}). Therefore, without loss of generality, we assume $X_0=P_1.$

  For any $\epsilon>0$ and perturbations $F\in\mathbb{C}^N$, we decompose $F$ into two parts and write $F=T+F^\perp$, for some $T\in E^u$ and $F^\perp\in E^c\bigoplus　E^s$. Let $X^\epsilon_0=X_0+\epsilon F=P_1+T_\epsilon+\epsilon F^\perp$, where $T_\epsilon=\epsilon T$. Let $X^\epsilon(t)$ and $\tilde{X}^\epsilon(t)$ be the solution of (\ref{flowmatrix}) with $X^\epsilon(0)=X^\epsilon_0$ and $\tilde{X}^\epsilon(0)=P_1+\epsilon F^\perp,$ respectively. Since the system (\ref{flowmatrix}) is invariant under translations, Theorem \ref{exist} implies that $X^\epsilon(t)=\tilde{X}^\epsilon(t)+T_\epsilon$ for all $t>0.$ Since $P_1+\epsilon F^\perp\in E^c\bigoplus　E^s$, Theorem \ref{constructsemistable} implies that $P_1+\epsilon F^\perp\in\mathcal{W}$ for sufficiently small $\epsilon$. Therefore, the solution $\tilde{Y}^\epsilon(t)$ of (\ref{turnselfauto}) with $\tilde{Y}^\epsilon(0)=\tilde{X}^\epsilon(0)$ converges to the regular $N$-gon $e^{i\eta}P_1$ for some $\eta\in[0,2\pi)$. Moreover, from the construction of (\ref{turnselfauto}), we know that $a(t)\tilde{Y}^\epsilon(t)$ is a solution of (\ref{flowmatrix}), where $a(t)$ is the scaling function we derived in Lemma \ref{selfp1lem}.

  Since $a(0)\tilde{Y}^\epsilon(0)=1\cdot\tilde{Y}^\epsilon(0)=\tilde{X}^\epsilon(0)$, by Theorem \ref{exist} we have $\tilde{X}^\epsilon(t)=a(t)\tilde{Y}^\epsilon(t)$ for all $t>0$. Since $\tilde{Y}^\epsilon(t)\rightarrow e^{i\eta} P_1$ as $t\rightarrow\infty$, $\displaystyle\lim_{t\rightarrow\infty}a(t)=0$, and $X^\epsilon(t)=\tilde{X}^\epsilon(t)+T_\epsilon$, it follows that $X^\epsilon(t)$ shrinks to a point as $t\rightarrow\infty$, and the limiting shape is a regular polygon.

\end{proof}

\subsection{Quadrilaterals}

In this section, we prove the weaker stability result for quadrilaterals.

\begin{proof}[Proof of Theorem \ref{stableshapethm4}]
  Let $X_0\in\mathbb{C}^4$ be a square. By the invariance property, Lemma \ref{leminvariant}, 
   we can assume $X_0=P_1$, where $P_1$ is the regular square defined in (\ref{eigofM}).
  Similar to Theorem \ref{constructsemistable}, we construct a 5-dimensional semi-stable manifold $\mathcal{W}$ orthogonal to $E^u$ such that for any $x_0\in\mathcal{W}$, the trajectory $x(t)$ of (\ref{turnselfauto}) with $x(0)=x_0$ converges to the square $e^{i\eta}P_1$ for some $\eta\in[0,2\pi).$
  
   Consider the disjoint union $$\mathcal{W}'=\coprod_{x\in E^u}(x+\mathcal{W}).$$ We see that $\mathcal{W}'$ is a 7-dimensional hypersurface of $\mathbb{R}^8$ since it homeomorphic to $\mathbb{R}^2\times \mathcal{W}.$
   Similar to the argument in Theorem \ref{stableshapethm5}, the results follows.
\end{proof}

We see that $N=4$ is exceptional, since around any regular square $P_1$, there exists a non-regular one-parameter family of self-similar rhombus solutions of the form $P_1+\epsilon \overline{P_1}.$ See Example \ref{example:rhombus}. Note that these are affinely-regular and equilateral, but not regular. In case of larger $N$, there do not exist 
affinely-regular but non-regular equilateral polygons. 

The main obstacle to a complete result for quadrilaterals is a clear description of the
linearization of the rescaled flow around a rhombus.

\end{document}